\documentclass[12pt]{article}

\usepackage{amssymb}
\usepackage{amsthm}
\usepackage{amsmath}
\usepackage{tikz}

\usepackage{pict2e}

\newtheorem{prop}{Proposition}[section]
\newtheorem{cor}[prop]{Corollary}
\newtheorem{thm}[prop]{Theorem}
\newtheorem{lem}[prop]{Lemma}
\theoremstyle{definition}

\newtheorem{example}[prop]{Example}
\newtheorem{rem}[prop]{Remark}

\def\GAP{\textsf{GAP}}

\def\Irr{{\rm Irr}}
\def\Lin{{\rm Lin}}
  
  \def\F{{\mathbb F}}
\def\GL{\mathrm{GL}}
\def\PSL{\mathrm{PSL}}
\def\PGL{\mathrm{PGL}}
\def\PSU{\mathrm{PSU}}

\def\SL{\mathrm{SL}}

\def\AGL{\mathrm{AGL}}
\def\Qd{\mathrm{Qd}}
\def\SA{\mathrm{SA}}
\def\PGammaL{\mathrm{P}\Gamma\mathrm{L}}
\def\AGammaL{\mathrm{A}\Gamma\mathrm{L}}

\def\dominates{\unrhd}

\renewcommand{\leq}{\leqslant}
\renewcommand{\geq}{\geqslant}

\begin{document}

\title{On the total character of a finite group}
\author{T. Breuer, L. H\'ethelyi, B. K\"ulshammer and M. Sz\H{o}ke}

\date{\today}

\maketitle

\begin{center}
 \textit{Dedicated to the memory of Jon Alperin (1937-2025)}
\end{center}

\abstract{\noindent
The total character $\tau_G$ of a finite group $G$ is the sum of all
irreducible complex characters of $G$, and the total degree of $G$ is $T(G) :=
\tau_G(1)$. A proper subgroup $H$ of $G$ is rich if $\tau_G$ is ``contained''
in the permutation character $(1_H)^G$. 
In the first part of this paper, we investigate rich subgroups whose index is 
a product of two primes. We also consider rich subgroups of symmetric and 
alternating groups. In the second part we establish a formula for $T(G)$ 
in the case where the order of $G$ is a prime power. This result is analogous
to a formula for the class number of $G$ proved by P.~Hall, and it confirms a conjecture by Heffernan and MacHale from 2008. In the last part of the paper, we investigate finite groups $G$ where $T(G)$ is small, in a certain sense.}

\section{Introduction}\label{sect:intro}

Let $H$ be a subgroup of a finite group $G$. The Frobenius graph $\Gamma(G,H)$
is defined as follows: Its vertex set is the disjoint union of $\Irr(G)$ and 
$\Irr(H)$, and $\chi \in \Irr(G)$ and $\psi \in \Irr(H)$ are adjacent if and only if $\psi$ is a constituent of the restriction $\chi_H$
of $\chi$ to $H$. In \cite{partI} the authors investigated the situation where 
$\Gamma(G,H)$ has diameter $3$. If $\Gamma(G,H)$ has diameter 3 and $1 < H < G$
then, by \cite[Proposition~2.2]{partI}, every irreducible character $\chi$ of
$G$ is a constituent of the permutation character $(1_H)^G$. This motivated the 
introduction of the notion of a rich subgroup.

By \cite[Definition~2.7]{partI}, a proper subgroup $H$ of a finite group $G$
is called rich in $G$, if every $\chi \in \Irr(G)$ is a constituent of $(1_H)^G$.
In \cite[Proposition~4.2]{partI},
the authors showed that the index of a nontrivial rich 
subgroup $H$ of $G$ cannot be a prime power.
Moreover, in \cite[Proposition~4.6]{partI},
they showed that a finite group $G$ containing a rich subgroup 
$H$ of index $2p$ where $p$ is a prime, must be a Frobenius group of order 
$p(p+1)$, and $p$ must be a Mersenne prime. 

In Section~\ref{sect:news} of this paper, we will prove an analogous result (Proposition~\ref{prop:indexpr})
for finite groups
containing a rich subgroup whose index is a product of two odd primes. This
result was already announced in \cite{partI}. In 
Section~\ref{sect:symmetric} we will establish
several results on rich subgroups of finite symmetric and alternating groups.

The total character $\tau = \tau_G$ of a finite group $G$ is defined by $\tau 
:= \sum_{\chi \in \Irr(G)} \chi$. Thus $\tau$ is always ``contained'' in the 
regular character $\rho := \rho_G = \sum_{\chi \in \Irr(G)} \chi(1) \chi$
of $G$. More generally, it is always contained in the permutation character 
$(1_H)^G$ whenever $H$ is a rich subgroup of $G$. (Some authors call $\tau_G$
the Gelfand character of $G$; cf. \cite{Soto}, for example.) 

In Section~\ref{sect:degrees} of the present paper, we prove a lower bound for $T(G) :=
\tau_G(1) = \sum_{\chi \in \Irr(G)} \chi(1)$, the degree of the total 
character of $G$, in the case where $G$ has prime power order. This result
(Proposition~\ref{HefMacH}) confirms a conjecture by Heffernan and MacHale in \cite{HM}.
We also establish an analog of P.~Hall's class number formula
\cite[Satz~V.15.2]{HuppertI}) for groups of prime power order (cf. 
Proposition~\ref{TGHall}).

In Section~\ref{sect:classifications} we will classify certain groups
in terms of their total degrees, and in Section~\ref{sect:algorithm}
we will discuss groups of small total degree, in particular the groups
$G$ with $T(G) \leq 100$.

Our notation is mostly standard. For a finite group $G$, we denote by $\Irr(G)$ 
the set of irreducible complex characters of $G$, and by $\Lin(G)$ its subset 
of linear characters. Then $k(G) := |\Irr(G)|$ is the class 
number of $G$. We define $b(G) := \max\{\chi(1): \chi \in \Irr(G)\}$ and 
$T(G) := \sum_{\chi \in \Irr(G)} \chi(1)$. For (virtual) characters $\chi,\psi$
of $G$, we denote by $[\chi,\psi] = [\chi,\psi]_G$ the inner product of $\chi$
and $\psi$. Moreover, we write $\chi_H$ for the restriction of $\chi$ to $H$,
and $\phi^G$ for the induction to $G$ of a (virtual) character $\phi$ of $H$. 
We also set $\Irr(G|\phi) := \{\chi \in \Irr(G): [\chi_H,\phi] \neq 0\}$ and 
$T(G|\phi) := \sum_{\chi \in \Irr(G|\phi)} \chi(1)$.




\section{Rich subgroups of small index}\label{sect:news}

We start with a result on finite simple groups of Lie type which is probably
known to the experts.

\begin{prop}\label{prop:SylowReductive}
Let $S$ be a finite simple group of Lie type in characteristic $r$,
and let $R$ be a Sylow $r$-subgroup of $S$.
Then $R$ is \emph{not} rich in $S$.
\end{prop}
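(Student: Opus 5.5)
The plan is to translate richness of $R$ into Harish-Chandra theory and exhibit a cuspidal irreducible character of $S$. Write $S = G^F/Z$, where $G$ is a simple simply connected algebraic group with Frobenius map $F$ (or a Steinberg/Suzuki–Ree endomorphism) and $Z = Z(G^F)$. The exceptional case of the Tits group ${}^2F_4(2)'$ is postponed to the end.

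Let $B = UT$ be an $F$-stable Borel subgroup with unipotent radical $U$, so $U \in \mathrm{Syl}_r(G^F)$. Since $|Z|$ is prime to $r$, $U$ maps isomorphically onto a Sylow $r$-subgroup of $S$. By Sylow's theorem all choices of $R$ are conjugate, so we may take $R = UZ/Z$.

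A character $\chi \in \Irr(S)$, inflated to $G^F$, is a constituent of $(1_R)^S$ if and only if it is a constituent of $(1_{UZ})^{G^F}$. This in turn holds if and only if $\chi_U$ contains $1_U$. Because $U \trianglelefteq B$ and $B/U \cong T$ is abelian, we have $(1_U)^B = \sum_{\lambda \in \Irr(T)} \lambda$, with each $\lambda$ inflated to $B$. Hence $\chi$ is a constituent of $(1_U)^{G^F}$ if and only if $\chi$ lies in the principal series, i.e.\ is a constituent of $(\lambda)^{G^F}$ for some $\lambda \in \Irr(T)$. So it suffices to find an irreducible character of $G^F$ that is trivial on $Z$ and does not lie in the principal series. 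In particular, any cuspidal character trivial on $Z$ will do, since the semisimple rank is at least $1$.

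To construct such a character, use Deligne–Lusztig theory. Take a Coxeter torus $T_w$ (more generally, any $F$-stable maximal torus that is anisotropic modulo the centre) and a character $\theta \in \Irr(T_w^F)$ in general position. Then $\pm R_{T_w}^{\theta}$ is irreducible. Its Harish-Chandra restriction to any proper split Levi subgroup is zero, because $T_w$ lies in no proper $F$-stable Levi subgroup of a proper $F$-stable parabolic. Hence it is cuspidal, and in particular not in the principal series.

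We also need $R_{T_w}^{\theta}$ to be trivial on $Z$. Since $Z \le T_w^F$, the central character of $R_{T_w}^\theta$ is $\theta_Z$, so it is enough to choose $\theta$ with $\theta_Z = 1$. Such $\theta$ are characters of $T_w^F/Z$. For a Coxeter torus, $|T_w^F|$ is roughly $q^\ell$ and $|Z| \le \ell+1$, so a counting argument shows that characters in general position that are trivial on $Z$ exist once $q$ is not too small. The key count is that the number of $\theta$ fixed by some nontrivial element of $N(T_w)^F/T_w^F$ is small compared to $|T_w^F/Z|$.

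The main obstacle is the remaining finitely many types of small $q$ where this count fails, for instance $\PSL_2(2)$, $\PSL_2(3)$ and small-rank groups over $\F_2$ or $\F_3$. Several of these are not simple anyway, or coincide with groups handled another way. For each of the rest, I would check directly that a cuspidal character with trivial central character exists. Two routes are available: use the known cuspidal unipotent characters (e.g.\ $\theta_{10}$ for $\mathrm{Sp}_4(2^n)$, or those in exceptional types), or verify with \GAP\ that some $\chi\in\Irr(S)$ satisfies $[\chi_R,1_R]=0$. The same \GAP\ check disposes of the Tits group ${}^2F_4(2)'$, where $R$ has index $2$ in a Sylow $2$-subgroup of ${}^2F_4(2)$.
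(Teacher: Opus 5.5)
Your underlying mechanism is the paper's: $R$ is rich only if every irreducible character lies in the principal series, so a single cuspidal character refutes richness. The difference is that the paper goes \emph{up}, not down. It writes $S=O^{r'}(G)$ with $G=\overline{K}^\sigma$ of adjoint type, and notes that $R$ is still a Sylow $r$-subgroup of $G$. It then takes \emph{any} cuspidal $\chi\in\Irr(G)$, whose existence it quotes from \cite{Kret}, \cite{AM} and \cite[Section~13.7]{Carter}. It concludes with \cite[Proposition~2.9]{partI}: a subgroup rich in $S$ would be rich in the overgroup $G$. No central character ever enters.

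By descending to the simply connected cover, you commit yourself to finding a cuspidal character with $Z=Z(G^F)$ in its kernel, and that is exactly the step you assert without proof. You give no actual bound on the number of $\theta\in\Irr(T_w^F/Z)$ fixed by a nontrivial element of $W(T_w)^F$. Before a finite \GAP\ computation can finish the job, you would also have to show the count succeeds uniformly in the rank for small $q$ such as $q=2,3$, for all untwisted and twisted types. As it stands the generic case is incomplete.

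The easy repair is the paper's trick, which removes the condition $\theta_Z=1$ altogether. Equivalently: if $\chi$ is cuspidal in $G_{\mathrm{ad}}^F$, then every constituent $\psi$ of $\chi_S$ has $[\psi_R,1_R]=0$. Once the count is done, your route would give a self-contained Deligne--Lusztig proof of the existence of cuspidal characters, which the paper outsources to the literature.

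Second, your setup $S=G^F/Z$ with $G$ simply connected in characteristic $r$ misses three groups that the paper treats by hand, besides the Tits group. These are $\mathrm{Sp}_4(2)'\cong A_6$ and $G_2(2)'\cong\PSU_3(3)$ with $r=2$, and ${}^2G_2(3)'\cong\PSL_2(8)$ with $r=3$. Your remark that such groups ``coincide with groups handled another way'' does not cover them. The isomorphic copies $\PSL_2(9)$, $\PSU_3(3)$, $\PSL_2(8)$ have a different defining characteristic, so your argument would be about a Sylow subgroup for the wrong prime. These groups need a direct check, using that a rich subgroup $H$ of $S$ always satisfies $T(S)\le[S:H]$. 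For example, $T(A_6)=46>45=[A_6:R]$ and $T(\PSL_2(8))=64>56=[\PSL_2(8):R]$.
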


\begin{proof}
Suppose first that $r=2$.
If $S$ is isomorphic to $B_2(2)' \cong A_6$ then 
$T(S) = 46 > 45 = [S:R]$.
Thus the result holds in this case, by~\cite[Corollary~2.8~(i)]{partI}.
If $S$ is isomorphic to $G_2(2)' \cong \PSU(3, 3^2)$ then
$T(S) = 188 < 189 = [S:R]$.
Since $[1_S, (1_R)^S] = 1$ we cannot have $[\chi, (1_R)^S] > 0$ for every
$\chi \in \Irr(S)$.
Thus the result also holds in this case.
If $S$ is isomorphic to ${}^2F_4(2)'$, the Tits simple group,
we have 
$T(S) = 14264 > 8775 = [S:R]$,
and the result holds.

Next suppose that $r=3$ and that $S$ is isomorphic to
${}^2 G_2(3)' \cong \PSL(2,8)$.
Then we have $T(S) = 64 > 56 = [S:R]$.
Thus the result follows as above.

In the following, we may assume that we are not in one of the cases
treated before.
Then $S$ is isomorphic to $O^{r'}(\overline{K}^\sigma)$
where $\overline{K}$ is a simple algebraic group of adjoint type
over the algebraic closure $\overline{\F}_r$ of $\F_r$,
and $\sigma$ is a Steinberg endomorphism of $\overline{K}$
(cf.~\cite[Definition~2.2.8]{GLS3}).
By~\cite[Proposition~2.1]{Kret} and~\cite[Theorem~7.2]{AM},
in combination with Section~13.7 in~\cite{Carter},
the finite group $G:= \overline{K}^\sigma$ has a cuspidal character $\chi$.
In particular, $\chi$ is not contained in the principal series of $G$,
i.~e., $[\chi, (1_R)^G] = 0$.
(Note that $R$ is also a Sylow $r$-subgroup of $G$.)
Thus $R$ is not rich in $G$,
and the result follows from~\cite[Proposition~2.9]{partI}.
\end{proof}

In general, simple groups can have rich Sylow $p$-subgroups,
see~\cite[Tables 3 and 4]{partI}.

Now we apply the result above to rich subgroups whose index is a product of
two odd primes. 

\begin{prop}\label{prop:indexpr}
Let $H$ be a nontrivial rich subgroup of a finite group $G$,
and suppose that $[G:H]$ is a product of two distinct odd primes.
Then $G$ is a Frobenius group of order $p^n r$
where $p$, $n$ and $r = (p^n-1)/(p-1)$ are primes.
Moreover, the Frobenius kernel $M$ of $G$ is elementary abelian of order $p^n$,
and a Frobenius complement $R$ of $G$ acts irreducibly on $M$.
\end{prop}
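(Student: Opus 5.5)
We argue by induction on $|G|$, using the reduction tools of \cite{partI}: rich subgroups pass to quotients (if $H$ is rich in $G$ and $N \trianglelefteq G$ with $HN<G$, then $HN/N$ is rich in $G/N$), the index of a nontrivial rich subgroup is never a prime power \cite[Proposition~4.2]{partI}, and the core-free reductions around \cite[Proposition~2.9]{partI}. Write $[G:H]=pq$ with $p\neq q$ odd primes.

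\emph{Step 1: $H$ is core-free and $G$ acts primitively.} Let $N=\mathrm{core}_G(H)$. Since $1_H$ inflates from $H/N$, every $\chi\in\Irr(G)$ lies over $1_N$, so $N\leq\ker\chi$ for all $\chi$. Hence $N=1$. Now suppose $H<K<G$. Then $[G:K]\in\{p,q\}$. Every $\chi\in\Irr(G)$ is a constituent of $(1_H)^G=((1_H)^K)^G$. Comparing degrees with $T(G)\leq pq$ and with $\Irr(G)\subseteq\Irr(G|\,\cdot\,)$, we should reach a contradiction. Concretely, I would pass to the core of $K$: if it is nontrivial, the quotient gives a rich subgroup of prime power index, contradicting \cite[Proposition~4.2]{partI}. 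If it is trivial, then $G$ embeds in $S_p$ with $H$ of index $q$ in a point stabilizer. I would then use that constituents of $(1_K)^G$ number at most the rank, together with a counting argument on $T(G)\le pq$, to rule this case out. This is the fiddliest part of the reduction. Granting it, $G$ is a primitive permutation group of degree $pq$, and every irreducible character occurs in the permutation character, so $k(G)\leq$ rank.

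\emph{Step 2: classification of primitive groups of degree $pq$.} By O'Nan--Scott, a primitive group of squarefree non-prime degree $pq$ is almost simple, or of product type with socle $T^2$, which would need degree a square and so is excluded, or affine, which would need degree a prime power and so is excluded. So the socle $S$ is nonabelian simple, and we use the known list of primitive groups of degree $pq$ (Liebeck--Saxl, Praeger--Xu). In each case we would show richness fails. For the actions of Lie type groups on cosets of parabolic subgroups, $H$ contains a Sylow $r$-subgroup $R$ of $S$. Richness of $H\cap S$ in $S$ then forces $R$ to be rich (constituents of $(1_{H\cap S})^S$ lie in $(1_R)^S$), contradicting Proposition~\ref{prop:SylowReductive}. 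The remaining cases are alternating groups acting on $2$-sets, and the sporadic or small exceptions. These are handled by comparing $T(G)$ with $pq$ via \cite[Corollary~2.8]{partI}, or by exhibiting a character missing from $(1_H)^G$, for example using that $(1_H)^G$ has only $3$ constituents for $2$-set actions while $k(G)$ is larger.

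\emph{Step 3: wait, the conclusion is Frobenius, so it is not primitive.} This means Step 1 must be adjusted. The intermediate subgroup $K$ can exist, namely $K=M$ with the conclusion $H=1$-like structure. Since $|G|=p^nr$ and $[G:H]=pq$, we get $H$ of order $p^{n-1}$, contained in $M$. So the correct route is this: after showing $N=1$, consider a minimal normal subgroup $M$ of $G$. If $M$ is nonabelian, run Step 2's simple-group analysis on $HM$ and its quotients. If $M$ is an elementary abelian $p$-group, use that $G/M$ has $HM/M$ rich of prime-power index or of index $1$, forcing $HM=G$. Then $H$ complements, or has index $p$ in a complement... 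Analysing characters of $G$ lying over nontrivial $\lambda\in\Irr(M)$, together with richness, forces $G/M$ to act fixed-point-freely and transitively on $\Irr(M)\setminus\{1\}$ modulo degrees. This yields $|G/M|=r=(p^n-1)/(p-1)$ prime, $n$ prime, and irreducible action. The main obstacle is the nonabelian-socle case, which I expect to need the classification of primitive groups of degree $pq$ together with Proposition~\ref{prop:SylowReductive}.
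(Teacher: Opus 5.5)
There is a genuine gap, and it sits at the pivotal reduction. Once $H$ is core-free and $M$ is a minimal normal subgroup with $M<G$, you have $M\not\leq H$. So $[G:HM]$ is a proper divisor of $pq$, i.e.\ $1$, $p$ or $q$. Index $1$ is impossible: if $HM=G$ then $H/(H\cap M)\cong G/M$, so every nontrivial $\chi\in\Irr(G/M)$ restricts irreducibly and nontrivially to $H$, and is therefore missing from $(1_H)^G$. Prime index forces $HM/M=1$ by \cite[Proposition~4.2]{partI}. The correct conclusion is thus $H\leq M$. You wrote ``forcing $HM=G$'', which is exactly the excluded case. Your follow-up ``$H$ complements, or has index $p$ in a complement'' does not describe the actual configuration $H<M<G$ with $[M:H]=p$ and $[G:M]=q$ (your $q$ is the $r$ of the statement). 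The same misreading appears in Step~1. A nontrivial core $L$ of an intermediate subgroup $K$ does not contradict \cite[Proposition~4.2]{partI}; it only gives $H\leq L$, which is precisely the situation of the conclusion. So primitivity is false, not merely fiddly, and Step~2 (O'Nan--Scott plus the list of primitive groups of degree $pq$) is aimed at a configuration that need not occur.

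With $H\leq M$ established, three arguments are still missing.

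First, for nonabelian $M<G$ you offer nothing concrete: ``run Step~2 on $HM$'' is vacuous since $HM=M$. The paper uses the It\^o--Michler theorem with $O_p(M)=1$ to get $\psi\in\Irr(M)$ with $p\mid\psi(1)$. Then either $\psi^G\in\Irr(G)$, or $\psi$ has $q$ distinct extensions. This gives $pq\leq\psi^G(1)<T(G)$ or $pq\leq q\psi(1)<T(G)$, contradicting $T(G)\leq[G:H]=pq$.

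Second, when $G=M$ is simple, $H$ still need not be maximal, since a subgroup of prime index strictly between $H$ and $G$ is not ruled out. So a classification of primitive groups of degree $pq$ does not cover this case. The paper never needs maximality. It combines $|G|<p^2q^2$ from \cite[Proposition~4.1]{partI} with Proposition~\ref{prop:SylowReductive}, which forces the defining characteristic into $\{p,q\}$, and then uses order estimates family by family.

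Third, in the abelian case the transitivity statement must actually be proved. Suppose a hyperplane $K$ of $M$ is not $G$-conjugate to $H$, and choose $\psi\in\Irr(M)$ with kernel $K$. Then $\psi^G\in\Irr(G)$, and $(\psi^G)_M=\sum_{g\in R}\psi^g$ has no summand trivial on $H$, so $\psi^G$ is not a constituent of $(1_H)^G$. Since $R$ is cyclic of order $q$, transitivity on the $(p^n-1)/(p-1)$ hyperplanes is regular. This yields $q=(p^n-1)/(p-1)$, and then $n$ prime. Your phrase ``transitively on $\Irr(M)\setminus\{1\}$ modulo degrees'' neither states nor justifies this.
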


\begin{proof}
Let $M$ be a minimal normal subgroup of $G$.
We claim that $H$ is contained in $M$.
This is trivial if $M = G$, so we assume $M < G$.
Thus $HM/M$ is a rich subgroup of $G/M$, by \cite[Lemma~2.11]{partI}.
Since $H$ is core-free in $G$, by \cite[Corollary~2.8]{partI},
$M$ is not contained in $H$. Hence $HM/M$ has prime index in $G/M$.
Now~\cite[Proposition~4.2]{partI} implies that $HM/M = 1$.
Thus we have indeed $H < M$. 

Suppose first that $M$ is abelian.
Then $M$ is elementary abelian of order $p^n$ where $p$ is a prime and $n$ is a 
positive integer. Moreover, we have $|H| = p^{n-1}$ and $|G| = p^n r$ where $r 
\neq p$ is a prime.
Also, a Sylow $r$-subgroup $R$ of $G$ acts irreducibly on $M$.
Thus $G$ is a Frobenius group with kernel $M$ and complement $R$.

We claim that $R$ acts transitively on the set of subgroups of order $p^{n-1}$
in $G$. If not, let $K$ be a subgroup of order $p^{n-1}$ in $G$ which is
not conjugate to $H$, and choose $\psi \in \Irr(M)$ with kernel $K$. Then 
we have $\psi^G \in \Irr(G)$ and $(\psi^G)_M = \sum_{g \in R} \psi^g$. 
Moreover, each summand $\psi^g$ has kernel $K^g \neq H$, so that $[(\psi^G)_H,
1_H] = 0$, and we have a contradiction. 

Thus $R$ acts indeed transitively on the set of subgroups of order $p^{n-1}$ 
in $G$. Since $R$ is abelian, the action is regular, and we obtain 
$r = (p^n-1)/(p-1) = \prod_{1 \neq d \mid n} \Phi_d(p)$ where 
$|\Phi_d(p)| > 1$ for each $d$.
Since $r$ is a prime, $n$ has to be a prime as well.
Thus the result follows in this case. 

From now on we may assume that $M$ is nonabelian.
We may even assume that the Fitting subgroup of $G$ is trivial.

Suppose next that $M \neq G$. 
Then $[G:M]$ is a prime $r$, and $[M:H]$ is a prime $p \neq r$.
Since $O_p(M) = 1$ the It\^o-Michler theorem implies that there exists
$\psi \in \Irr(M)$ such that $p$ divides $\psi(1)$.
If $\psi$ is not $G$-stable then $\psi^G \in \Irr(G)$,
and we have the contradiction $p r \leq \psi^G(1) < T(G) \leq [G:H] = p r$.
Thus $\psi$ must be $G$-stable.
But then $\psi$ extends in $r$ ways to an irreducible character of $G$,
and we have the contradiction $p r \leq r \psi(1) < T(G) \leq [G:H] = p r$.

Thus we must have $G = M$, i.~e., $G$ is a nonabelian finite simple group.
We write $[G:H] = pr$ with primes $p$ and $r$, and 
apply the Classification of the Finite Simple Groups.
By \cite[Proposition~4.1]{partI}, we have $|G| < p^2 r^2$, and this
will often suffice for a contradiction.

Suppose first that $G = A_k$ for some $k \geq 5$.
Then the inequality $k!/2 = |A_k| < p^2 r^2 < k^4$ implies that $k \leq 6$.
However, then we have $[G:H] = 15$, and~\cite[Table~1]{partI} gives
a contradiction.

If $G$ is a sporadic simple group or the Tits group then again
the product of the largest two prime divisors of $|G|$
is smaller than $\sqrt{|G|}$.

This leaves the case where $G$ is a simple group of Lie type.
By Proposition~\ref{prop:SylowReductive},
$H$ cannot contain a Sylow $l$-subgroup of $G$
where $l$ is the defining characteristic of $G$.
Thus we have $l \in \{ p, r \}$.
In particular, $l$ is odd;
this excludes the cases $G = {}^2B_2(q)$ and $G = {}^2F_4(q)$, $q = 2^{2m+1}$.

The order of $G$ is a polynomial in $q$, a power of $l$,
divided by some ``small'' number,
and the polynomial in $q$ can be written as the product of
a power of $q$ and factors of the form $q^k \pm 1$,
for some positive integers $k$.

In the following, $N$ will denote an upper bound for $p r$;
we are done if $N < |G|/N$ holds.

If $G \in \{ D_n(q)\; (n \geq 4), F_4(q), E_6(q), E_7(q), E_8(q),
             {}^2D_n(q)\; (n \geq 4), {}^2E_6(q) \}$,
we may choose $N$ as the product of $l$ and
the largest factor $q^k \pm 1$ that occurs.
The same choice is possible for
$G \in \{ A_n(q), B_n(q), C_n(q), {}^2A_n(q) \}$ if $n \geq 3$ holds.
The remaining cases are as follows.

\begin{itemize}
\item
  $G = A_1(q)$.
  Then $|G| = q (q^2-1) / 2$.
  We use the generic character table of $G \cong \PSL(2,q)$
  to compute $T(G)$.
  If $q \equiv 3 \bmod 4$ then $T(G) = q (q+1)/2$,
  and we can choose $N = l(q-1)/2$.
  If $q \equiv 1 \bmod 4$ then $T(G) = 1 + q (q+1)/2$,
  and we can choose $N = l(q+1)/2$.
  In each case, $[G:H] \leq N < T(G)$ holds.
\item
  $G = A_2(q)$.
  Then $|G| = q^3 (q^2-1)(q^3-1) / \gcd(3, q-1)$.
  We choose $N = l (q^2+q+1)$, which is smaller than $|G|/N$.
\item
  $G = B_2(q)$.
  Then $|G| = q^4 (q^2-1)(q^4-1) / 2$.
  We choose $N = l (q^2+1)$, which is smaller than $|G|/N$.
\item
  $G = {}^2A_2(q)$.
  Then $|G| = q^3 (q^2-1)(q^3+1) / \gcd(3, q+1)$.
  We choose $N = l(q^2-q+1)$, which is smaller than $|G|/N$.
\item
  $G = G_2(q)$.
  Then $|G| = q^6 (q^6-1) (q^2-1)$.
  We choose $N = l (q^2+q+1)$, which is smaller than $|G|/N$.
\item
  $G = {}^3D_4(q)$.
  Then $|G| = q^{12} (q^8+q^4+1) (q^6-1) (q^2-1)$.
  We choose $N = l (q^4-q^2+1)$, which is smaller than $|G|/N$.
\item
  $G = {}^2G_2(q)$, $q = 3^{2m+1}$.
  Then $|G| = q^3 (q^3+1) (q-1)$.
  We choose $N = l (q^2-q+1)$, which is smaller than $|G|/N$.
\end{itemize}
\end{proof}

\begin{rem}
Proposition~\ref{prop:indexpr} can be proved without using
Proposition~\ref{prop:SylowReductive},
but then the arguments are more technical.
\end{rem}


\begin{rem}
For a fixed integer $b>1$, primes of the form $(b^n-1)/(b-1)$
where $n$ is a positive integer (necessarily a prime) are
called repunit primes to base $b$. For $b=2$, these are the 
well-known Mersenne primes. One expects that there are 
infinitely many repunit primes to base $b$, for each $b$ which
is not a proper power. Examples of these primes can be found
in \cite{Du}.
\end{rem}

The next result can be viewed as a kind of converse to
Proposition~\ref{prop:indexpr}.

\begin{prop}\label{prop:agl1}
Let $p$ be a prime, let $n$ be a positive integer, set $q = p^n$ and write
$q - 1 = r s$ with positive integers $r,s$. Moreover, let $G$ be the unique
subgroup of order
$q r$ in $\Gamma = \AGL(1, q)$, and let $H$ be a subgroup of order
$p^{n-1}$ in $G$. Then $H$ is rich in $G$ if and only if $s$ is a
divisor of $p - 1$ and $\gcd(r,s,n) = 1$.
\end{prop}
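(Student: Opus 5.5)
We describe $\Irr(G)$ explicitly and turn richness into a statement about cosets in the cyclic group $\F_q^*$. Write $G = V \rtimes C$, where $V = (\F_q,+)$ is the unique Sylow $p$-subgroup (elementary abelian of order $q$) and $C \leq \F_q^*$ is the unique subgroup of order $r$, acting by multiplication. Since $|H| = p^{n-1}$ and $V$ is the only Sylow $p$-subgroup, $H$ is an $\F_p$-hyperplane of $V$. Because $C$ acts fixed-point-freely on $V\setminus\{0\}$, $G$ is a Frobenius group (or $G=V$ if $r=1$). Hence $\Irr(G)$ consists of:
\begin{itemize}
\item the $r$ linear characters inflated from $G/V \cong C$;
\item the $s$ characters $\psi^G$ of degree $r$, one for each $C$-orbit of nontrivial $\psi \in \Irr(V)$.
\end{itemize}
The linear characters are trivial on $V \supseteq H$, so they are always constituents of $(1_H)^G$. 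By Frobenius reciprocity and $(\psi^G)_V = \sum_{c\in C}\psi^c$, the character $\psi^G$ is a constituent of $(1_H)^G$ if and only if some conjugate $\psi^c$ has kernel containing $H$, i.e.\ kernel exactly $H$.

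Next we identify $\Irr(V)$ with $\F_q$ via the trace form: $\psi_a(x) = \varepsilon^{\mathrm{Tr}(ax)}$, where $\varepsilon$ is a primitive $p$th root of unity and $\mathrm{Tr}=\mathrm{Tr}_{\F_q/\F_p}$. Conjugation by $c \in C$ sends $\psi_a$ to $\psi_{c^{\pm1}a}$, and the sign is irrelevant because $C$ is a group. The characters trivial on $H$ form the $1$-dimensional $\F_p$-space $H^\perp = a_0\F_p$ for some $a_0 \neq 0$. So $H$ is rich if and only if every $C$-orbit on $\F_q^*$ meets $a_0\F_p^*$, that is, $C\,a_0\F_p^* = \F_q^*$. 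Multiplying by $a_0^{-1}$, this is equivalent to $C\F_p^* = \F_q^*$. In the cyclic group $\F_q^*$ we have $|C\F_p^*| = \mathrm{lcm}(r,p-1)$. Hence
\[
H \text{ is rich in } G \iff \mathrm{lcm}(r,p-1) = q-1 = rs .
\]

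It remains to do the number theory, and this is the only step needing care. First, $\mathrm{lcm}(r,p-1)/r = (p-1)/\gcd(r,p-1)$, so the displayed condition forces $s \mid p-1$. Conversely, assume $s \mid p-1$ and write $p-1 = st$. Put $m = 1+p+\dots+p^{n-1}$, so that $q-1 = (p-1)m$ and therefore $r = tm$. Then
\[
\mathrm{lcm}(r,p-1) = \mathrm{lcm}(tm,ts) = t\,\mathrm{lcm}(m,s),
\]
which equals $rs = tms$ if and only if $\gcd(m,s) = 1$. Since $m \equiv n \pmod{p-1}$ and $s \mid p-1$, we get $\gcd(m,s) = \gcd(n,s)$. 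Finally, $\gcd(n,s) = 1 \iff \gcd(r,s,n) = 1$:
\begin{itemize}
\item if $d$ divides both $n$ and $s$, then $d \mid m$ (as $m \equiv n \bmod s$), hence $d \mid r = tm$;
\item the converse direction is trivial.
\end{itemize}
This yields exactly the criterion ``$s \mid p-1$ and $\gcd(r,s,n)=1$''.

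The main obstacles are bookkeeping: getting the Frobenius/Clifford description right, including the degenerate cases $r = 1$ or $s = 1$, and the final arithmetic reduction. The structural part is routine.
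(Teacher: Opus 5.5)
Your proof is correct and follows essentially the paper's route: both reduce richness to the factorization $\F_q^* = R\,D$, where $R$ is the complement of order $r$ (your $C$) and $D$ is the stabilizer of $H$ in $\F_q^*$, which your trace-form duality identifies explicitly as $\F_p^*$. The only real difference is the arithmetic finish: your computation $\mathrm{lcm}(tm,ts)=t\,\mathrm{lcm}(m,s)$ (with $p-1=st$, $r=tm$) replaces the paper's prime-by-prime comparison of Sylow subgroups of the cyclic group and is somewhat more streamlined.
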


\begin{proof}
Let $C$ be a complement of $N := O_p(\Gamma)$ in $\Gamma$. Then $C$ is cyclic
of order $q-1$, and $R := G \cap C$ is a complement of $N$ in $G$. We claim 
that $H$ is rich in $G$ if and only if $R$ acts transitively on the set of
subgroups of order $p^{n-1}$ in $G$. For the proof, recall that $G$ has $r$
linear characters and $s$ irreducible characters of degree $r$. Every linear
character of $G$ is trivial on $N$ and thus on $H$. Every non-linear 
irreducible character of $G$ has the form $\chi = \lambda^G$ where $\lambda$
is a nontrivial irreducible character of $N$. Thus $\chi_N = \sum_{g \in R}
\lambda^g$, and the kernel of $\lambda$ is a subgroup of order $p^{n-1}$ in
$G$. Hence we have $[\chi_H,1_H] > 0$ if and only if $H$ is the kernel of 
$\lambda^g$ for some $g \in R$. From this the claim follows easily.

Obviously $C$ permutes the $(q-1)/(p-1)$ subgroups of order $p^{n-1}$ in $N$
transitively, and $D := N_C(H)$ has order $p-1$. Hence $R$ permutes the 
subgroups of order $p^{n-1}$ in $G$ transitively if and only if $C = RD$. We
claim that this is the case if and only if $s$ divides $p-1$ and $\gcd(r,s,n)
= 1$. For the proof, note that 
$$(q-1)/(p-1) = p^{n-1} + \ldots + p + 1 \equiv n \pmod{p-1}.$$
Now suppose that $C = RD$. Comparing orders on both sides we obtain
$$rs = q-1 = r(p-1)/\gcd(r,p-1).$$
Thus $s = (p-1)/\gcd(r,p-1)$ divides $p-1$. Assume that $\gcd(r,s,n) > 1$, 
and choose a common prime divisor $t$ of $r$, $s$ and $n$. Then the Sylow
$t$-subgroup $R_t$ of $R$ is properly contained in the Sylow $t$-subgroup
$C_t$ of $C$. Since $t$ divides $n$ and thus $(q-1)/(p-1)$, the Sylow
$t$-subgroup $D_t$ of $D$ is also properly contained in $C_t$. Since $C_t$ is cyclic this implies $R_tD_t < C_t$ which is a contradiction. 

Conversely, suppose that $s$ divides $p-1$ and $\gcd(r,s,n) = 1$. Then we 
have $(q-1)/(p-1) \equiv n \pmod{s}$ and thus $\gcd(r,s,(q-1)/(p-1)) = 1$. 
Let $t$ be a prime divisor of $q-1 = rs$. If $t$ divides $r$ but not $s$
then $R$ contains $C_t$. If $t$ divides $r$ and $s$ then $t$ does not 
divide $(q-1)/(p-1)$.
Thus $D$ contains $C_t$. Finally, if $t$ does not divide $r$ then $|C_t|$
divides $s$ and thus $p-1$. Hence $D$ contains $C_t$. We conclude that 
$RD = C$.
\end{proof}


\section{Rich subgroups in symmetric and alternating groups}%
\label{sect:symmetric}

In this section we consider rich subgroups of the symmetric and alternating 
groups. We start with the following result.

\begin{prop}\label{transitiverich}
 Let $H$ be a rich subgroup of a permutation group $G$.
 Then $H$ is intransitive.
\end{prop}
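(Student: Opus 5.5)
Let $G$ act on a set $\Omega$ with $|\Omega| = n \geq 2$, and write $\pi$ for the permutation character of $G$ on $\Omega$. I will assume the action is faithful and nontrivial, since otherwise the statement is vacuous. Suppose, for a contradiction, that $H$ is rich in $G$ and transitive on $\Omega$. The plan is to produce an irreducible constituent $\chi$ of $\pi - 1_G$ and to show that $\chi$ cannot be a constituent of $(1_H)^G$.

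First I look at $\pi_H$. Since $H$ is transitive on $\Omega$, we have $[\pi_H, 1_H] = 1$, because this inner product counts the $H$-orbits on $\Omega$. On the other side, $[\pi, 1_G] \geq 1$. So $[\pi - 1_G, \ldots]$ can be controlled as follows. If $G$ is transitive, then $\pi = 1_G + \theta$ with $\theta$ a character, and
\[
[\theta_H, 1_H] = [\pi_H, 1_H] - 1 = 0 .
\]
Since $H$ is transitive, so is $G \supseteq H$, and this case always applies. Because $n \geq 2$, $\theta$ is a nonzero character, so we can pick an irreducible constituent $\chi$ of $\theta$. Every irreducible constituent of $\theta_H$ is then nontrivial, and in particular $[\chi_H, 1_H] = 0$.

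Next I apply Frobenius reciprocity:
\[
[\chi, (1_H)^G]_G = [\chi_H, 1_H]_H = 0 .
\]
So $\chi$ is not a constituent of $(1_H)^G$, and $H$ is not rich. This contradicts the assumption, and the proposition follows.

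There is no real obstacle here. The only point needing care is the inequality $[\chi_H, 1_H] \leq [\theta_H, 1_H]$, which holds because $\chi$ is a constituent of the genuine character $\theta$, so $\theta - \chi$ is a character and $[(\theta - \chi)_H, 1_H] \geq 0$.
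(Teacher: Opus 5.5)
Your proof is correct and is essentially the paper's argument: both rest on $[\pi_H,1_H]$ counting $H$-orbits, so that $\pi-1$, restricted to a transitive $H$, has no trivial constituent. The paper first passes to $S_n$ (richness goes up to overgroups, Proposition~2.9 of Part~I), where $\pi-1_{S_n}$ is irreducible by $2$-transitivity; you instead stay in $G$ and take any irreducible constituent of $\pi-1_G$, which makes that reduction unnecessary.
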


\begin{proof}
 We can assume that $G$ is a subgroup of the symmetric group $S_n$
 where $n$ is a positive integer.
 By~\cite[Proposition~2.9]{partI}, $H$ is rich in $S_n$.
 Since $H < S_n$ we have $n>1$.
 Let $\pi$ be the natural permutation character of $S_n$.
 Then $\pi = 1_{S_n} + \chi$ where $\chi$ is a nontrivial irreducible character
 of $S_n$. Assume that $H$ is transitive.
 Then
 \[
    1 = [\pi_H,1_H] = [1_H + \chi_H, 1_H] = 1 + [\chi_H,1_H].
 \]
 This gives the contradiction $[\chi_H,1_H] = 0$. 
\end{proof}

We obtain the following consequence.

\begin{prop}\label{MaxAlternating}
 Let $H$ be a maximal subgroup of an alternating group $A_n$.
 Then $H$ is not rich in $A_n$ and in the symmetric group $S_n$.
\end{prop}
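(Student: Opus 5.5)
By Proposition~\ref{transitiverich}, a rich subgroup of $A_n$ or of $S_n$ must be intransitive on $\{1,\dots,n\}$. So the plan is to show that a maximal subgroup $H$ of $A_n$ is either transitive, and then not rich, or is of the specific intransitive type, which we treat directly. Recall also that by \cite[Proposition~2.9]{partI}, richness in a subgroup passes up to the overgroup. Hence if $H$ is rich in $A_n$ then $H$ is rich in $S_n$, and it suffices to show that $H$ is not rich in $S_n$. Note that $H$ is also a subgroup of $S_n$, which is a permutation group, so Proposition~\ref{transitiverich} applies to $H$ inside $S_n$ as well as inside $A_n$.

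We may assume $n \geq 2$; for $n\le 2$ the group $A_n$ is trivial and has no proper subgroups. If $H$ is transitive, we are done by Proposition~\ref{transitiverich}. Otherwise $H$ is intransitive and leaves invariant some orbit $\Delta$ with $1 \leq k := |\Delta| < n$. Then $H \leq (S_\Delta \times S_{\Omega\setminus\Delta}) \cap A_n$. This intersection is a proper subgroup of $A_n$ whenever $n \geq 3$, so by maximality
\[
H = (S_k \times S_{n-k}) \cap A_n .
\]
Small cases where $A_n$ itself is small, such as $n=3$, will be checked by hand; they should cause no trouble.

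For this intransitive case we need a different argument, and this is the main step. The permutation character of $S_n$ on the cosets of $H$ is
\[
(1_H)^{S_n} = \bigl((1_H)^{S_k\times S_{n-k}}\bigr)^{S_n} = \bigl(1 + \mathrm{sgn}\bigr)^{S_n},
\]
where $\mathrm{sgn}$ here is the sign character of $S_k\times S_{n-k}$ (assuming $H\ne S_k\times S_{n-k}$, i.e. $n\geq 2$). This is $\pi_{(n-k,k)} + \mathrm{sgn}\cdot\pi_{(n-k,k)}$, with $\pi_{(n-k,k)} = (1_{S_k\times S_{n-k}})^{S_n}$. By Young's rule, $\pi_{(n-k,k)}$ has as constituents only $\chi^\lambda$ with $\lambda \dominates (n-k,k)$, that is, partitions with at most two rows. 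Tensoring with sign, the second summand has only $\chi^{\lambda'}$ for such $\lambda$, that is, partitions with at most two columns. For $n\geq 5$ there is a partition with at least three rows and at least three columns, namely $(n-2,1,1)$ for $n\geq 5$. Its character is a constituent of neither summand, so $H$ is not rich in $S_n$ and hence not rich in $A_n$. For $n\leq 4$ we check directly, using the fact from \cite[Proposition~4.2]{partI} that rich subgroups cannot have prime power index, together with explicit small computations.

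The main obstacle I expect is bookkeeping. We must confirm the identification of the induced character, $(1_H)^{S_k\times S_{n-k}} = 1+\mathrm{sgn}$, which requires $H\neq S_k\times S_{n-k}$; this holds since that subgroup contains odd permutations once $n\geq 3$. We must also handle $k=n-k$ and the tiny values of $n$ separately.
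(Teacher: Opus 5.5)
Your argument is correct for $n\ge 4$, but after the common first step it follows a different route from the paper's.

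\textbf{Common reduction.} Both proofs pass from $A_n$ to $S_n$ via \cite[Proposition~2.9]{partI}. Both then use Proposition~\ref{transitiverich} and maximality to reduce to $H=(S_k\times S_{n-k})\cap A_n$.

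\textbf{The paper's route.} The paper then argues numerically. Richness forces $|S_n|<[S_n:H]^2$ by \cite[Proposition~4.1]{partI}, that is, $n!<4\binom{n}{k}^2\le 4\binom{n}{\lfloor n/2\rfloor}^2$. An induction shows this fails for $n>6$. The remaining cases $n\le 6$ are excluded by comparing $T(S_n)$ with $[S_n:H]=2\binom{n}{k}$.

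\textbf{Your route.} You write $(1_H)^{S_n}=\pi_{(n-k,k)}+\mathrm{sgn}\cdot\pi_{(n-k,k)}$. Young's rule then produces an explicit missing constituent, $\chi^{(n-2,1,1)}$, for every $n\ge 5$. This is uniform in $n$ and needs no factorial estimate or table of $T(S_n)$. It also proves more: for $n\ge5$, no subgroup $(S_k\times S_{n-k})\cap A_n$ is rich in $S_n$, whatever $k$ is. It is exactly the paper's later Lemma~\ref{lemma:richyoung} specialised to $\lambda=(n-k,k)$. The paper's route needs only the coarse numerical consequences of richness and known values of $T(S_n)$, not Young's rule.

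\textbf{Correction for $n=3$.} The case $n=3$ does not ``cause no trouble''. The only maximal subgroup of $A_3$ is $1$. The trivial subgroup is rich in both $A_3$ and $S_3$, because the regular character contains every irreducible character. Your prime-power-index tool, \cite[Proposition~4.2]{partI}, applies only to nontrivial rich subgroups, so it cannot exclude this. The statement as written therefore genuinely needs $n\ge 4$. The paper's proof carries the same tacit restriction: for $n=3$, $T(S_3)=4\le 6=2\binom{3}{1}$ gives no contradiction.

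\textbf{The case $n=4$.} Your check works here. The intransitive maximal subgroup is $A_3$, which is nontrivial of index $4$ in $A_4$ and index $8$ in $S_4$, so \cite[Proposition~4.2]{partI} rules it out. Maximality is what excludes $k=2$: the subgroup $(S_2\times S_2)\cap A_4$ is in fact rich in $S_4$.
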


\begin{proof}
 Let $H < A_n$ be rich in $S_n$.
 By Proposition~\ref{transitiverich}, $H$ must be intransitive. 
 We may therefore assume that $H \subseteq S_k \times
 S_{n-k}$ for some $k \in \{1,\ldots,n-1\}$. This implies that $H = A_n 
 \cap (S_k \times S_{n-k})$; in particular, we have $|H| = k! (n-k)!/2$ and 
 $[S_n:H] = 2 {n \choose k}$. Since $H$ is rich in $S_n$, we must have 
 $|S_n| < [S_n:H]^2$, by \cite[Proposition 4.1]{partI}. This implies that 
 $n! < 4 {n \choose k}^2 \leq 4 {n \choose \lfloor n/2 \rfloor}^2$.
 However, one easily proves by induction that
 $m! \geq 4 {m \choose \lfloor m/2 \rfloor}^2$ for $m > 6$.
 Thus we conclude that $n \leq 6$.
 But in this case we easily get a contradiction from the inequality
 $T(S_n) \leq [S_n:H] = 2 {n \choose k}$ (cf.~\cite[Proposition 4.1]{partI}).
\end{proof}

In general, maximal subgroups of simple groups can be rich,
see~\cite[Tables 3 and 4]{partI}.

We now describe the rich \emph{cyclic} subgroups
of symmetric and alternating groups.

\begin{thm}\label{cyclicinS_n}
Let $\sigma$ be a nonidentity element of the symmetric group $S_n$
that is contained in the alternating group $A_n$.
(Note that by \cite[Corollary~2.8~(iii)]{partI},
any rich subgroup of $S_n$ is contained in $A_n$.)

(i) $\langle \sigma \rangle$ is rich in $S_n$,
except if $\sigma$ has the cycle type $(3, 1)$ or $(5, 3)$ or $(n)$.

(ii) $\langle \sigma \rangle$ is rich in $A_n$
if and only if it is rich in $S_n$.
\end{thm}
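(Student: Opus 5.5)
Let $C = \langle \sigma \rangle$ have order $m$. A character $\chi$ of $G$ is a constituent of $(1_C)^G$ if and only if
\[
[\chi_C, 1_C] = \frac{1}{m}\sum_{j=0}^{m-1} \chi(\sigma^j) > 0 .
\]
The plan is to verify this positivity for all $\chi \in \Irr(S_n)$, and then for $\Irr(A_n)$, using the structure of the powers of $\sigma$.

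\textbf{Exceptional cycle types.} First I dispose of these.
\begin{itemize}
\item If $\sigma$ is an $n$-cycle, then $C$ is transitive, so Proposition~\ref{transitiverich} shows that $C$ is not rich.
\item For cycle type $(3,1)$ in $S_4$, we have $[S_4:C] = 8 < T(S_4) = 10$, so $C$ is not rich by \cite[Proposition~4.1]{partI}.
\item For type $(5,3)$ in $S_8$ (order $15$), I would check with the character table that some $\chi$ has $\sum_j \chi(\sigma^j) = 0$. A direct computation in \GAP\ suffices here.
\end{itemize}

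\textbf{Main case for (i).} Suppose the cycle type is $\lambda = (\lambda_1, \dots, \lambda_k)$ with $k \geq 2$ parts, including fixed points. I would argue by induction on $n$ via Young subgroups. Since $C \leq S_{\lambda_1} \times \dots \times S_{\lambda_k}$, it is enough to find, for each $\chi \in \Irr(S_n)$, some constituent $\psi$ of $\chi_Y$ for a suitable Young subgroup $Y \supseteq C$ such that $[\psi_C, 1_C] > 0$.

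The simplest route is to split $\sigma = \sigma_1 \sigma_2$ with disjoint supports of sizes $a + b = n$. Then $C$ is a subdirect subgroup of $\langle \sigma_1 \rangle \times \langle \sigma_2 \rangle$. By Littlewood--Richardson, $\chi_{S_a \times S_b}$ contains many products $\alpha \times \beta$, and I would choose them to make the trivial character appear. A cleaner tool is the known fact (Kraśkiewicz--Weyman / Swanson) that the $n$-cycle $\langle c_n \rangle$ has $[\chi^\mu, 1]$ equal to the number of standard tableaux of shape $\mu$ with major index $\equiv 0 \pmod n$. Swanson's theorem says this count is positive except for the shapes $(n-1,1)$, $(2,1^{n-2})$, and the sign-type exceptions $(1^n)$ (for $n$ even) and $(2^2)$, $(2^3)$, $(3^2)$.

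Hence for a product of cycles I would use the fact that $(1_C)^{S_n}$ contains $\mathrm{Ind}_{\prod S_{\lambda_i}}\bigl(\bigotimes_i (1_{\langle c_{\lambda_i}\rangle})^{S_{\lambda_i}}\bigr)$ twisted by the diagonal. More precisely, $C$ lies in $D = \prod_i \langle c_{\lambda_i} \rangle$, and $(1_D)^{S_n} \leq (1_C)^{S_n}$. So it suffices to show that every $\chi^\mu$ appears in $\mathrm{Ind}_{\prod S_{\lambda_i}}^{S_n}\bigl(\bigotimes_i \pi_i\bigr)$, where $\pi_i$ is the cyclic-invariant character, which by Swanson misses only a few shapes. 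By Littlewood--Richardson, $\chi^\mu$ appears provided one can peel $\mu$ into skew pieces realizing allowed shapes. This is the combinatorial heart of the argument and the main obstacle.

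Using $D$ rather than $C$ loses information when the $\lambda_i$ share factors; for example, in type $(5,3)$ the group $C$ is already all of $D$, which is consistent with that case being an exception. When $D$ fails, I would fall back on the character sum for $C$ directly, handling only small $n$ computationally. I expect the case analysis for partitions with two parts, one of them small (such as $(m,1)$ or $(m,2)$), to require the most care.

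\textbf{Part (ii).} If $C$ is rich in $A_n$, then it is rich in $S_n$, since every $\chi \in \Irr(S_n)$ restricts to a sum of characters of $A_n$, each of which meets $(1_C)^{A_n}$. Conversely, suppose $C$ is rich in $S_n$. For $\chi^\mu$ with $\mu \neq \mu'$, the restriction to $A_n$ is irreducible, so positivity is inherited. For $\mu = \mu'$, we have $\chi^\mu_{A_n} = \theta + \theta^g$ with $g$ a transposition, and we need both $\theta$ and $\theta^g$ to be constituents of $(1_C)^{A_n}$. Since $\theta^g$ is a constituent of $(1_{C^g})^{A_n}$, it suffices that $C$ is $A_n$-conjugate to $C^g$. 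This holds unless the cycle type of $\sigma$ has distinct odd parts, i.e. unless the $S_n$-class splits. In the split case I would compare $\theta$ and $\theta^g$ on the powers $\sigma^j$. The two characters differ only on the split classes, and some power of $\sigma$ generates $C$ and lies in each such class only when a power map relates the classes. I would verify, using the explicit values $\tfrac12\bigl(\epsilon \pm \sqrt{\epsilon \prod \lambda_i}\bigr)$, that the difference sums to zero over $C$ (a Gauss-sum symmetry), so that $[\theta_C, 1] = [\theta^g_C, 1]$.
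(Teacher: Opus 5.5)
\textbf{The main case of (i) is not proved.} Your handling of the exceptional cycle types, and of the implication ``rich in $A_n$ $\Rightarrow$ rich in $S_n$'', is fine. The reduction to $D=\prod_i\langle c_{\lambda_i}\rangle$, however, cannot work as stated.

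\emph{Even cycles.} If $\sigma$ has a cycle of even length, then $D$ contains an odd permutation. By Frobenius reciprocity the sign character $\chi^{(1^n)}$ is then never a constituent of $(1_D)^{S_n}$. This already happens for the infinite family of types $(2,2,1^{n-4})$, $n\ge 4$, so falling back on computations for small $n$ cannot cover it.

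\emph{Odd cycles.} The reduction also fails for some types with only odd parts. For $\sigma=(1\,2\,3)(4\,5\,6)$ and $\chi=\chi^{(3,2,1)}$ one gets $[\chi_D,1_D]=(16-4\cdot 2-4\cdot 2)/9=0$, whereas $[\chi_C,1_C]=(16-2-2)/3=4$.

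\emph{Swanson's exceptions.} For residue $0$ these are $(n-1,1)$, then $(2,1^{n-2})$ only for odd $n$, and $(1^n)$ only for even $n$. The shapes $(2^2),(2^3),(3^2)$ are exceptional only for nonzero residues.

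Most importantly, the Littlewood--Richardson \emph{peeling} that would be the actual proof is left undone. The paper avoids all of this by citing \cite[Theorems~1,~2]{Staroletov}. These classify the pairs $(\rho,\sigma)$, with $\rho$ irreducible for $S_n$ resp.\ $A_n$, such that $\rho(\sigma)$ misses some eigenvalue. Since $[\chi_C,1_C]$ is the multiplicity of the eigenvalue $1$ of $\rho(\sigma)$, one just reads off the cases excluding $1$: the $S_n$ list gives (i) and the $A_n$ list gives (ii). Without such a classification, or a complete proof of its eigenvalue-$1$ part, your argument does not establish (i).

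\textbf{The Gauss-sum step in (ii) fails in one case.} Suppose $\sigma$ has distinct odd parts $\lambda_i$, and put $q=\prod_i\lambda_i$. A permutation conjugating $\sigma$ to $\sigma^j$ acts on each cycle as $t\mapsto jt$ on $\mathbb{Z}/\lambda_i$, whose sign is the Jacobi symbol $(j/\lambda_i)$. Hence $\sigma^j$ is $A_n$-conjugate to $\sigma$ if and only if $(j/q)=1$.

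\emph{Non-square $q$.} Some odd permutation normalizes $C$, so $C^g$ is already $A_n$-conjugate to $C$ and your first argument applies.

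\emph{Square $q$.} All generators of $C$ lie in one $A_n$-class. If the cycle type equals the diagonal hook lengths of $\mu$, then $\theta-\theta^g$ sums over $C$ to $\pm\varphi(|C|)\sqrt{q}\neq 0$. For example, take $\sigma$ of type $(9,1)$ in $A_{10}$ and $\mu=(5,2,1^3)$. Then $[\theta_C,1_C]=26$ and $[\theta^g_C,1_C]=24$.

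Both values are positive, so (ii) is not contradicted. But your claimed equality is false exactly in the case not covered by conjugacy, and that case needs a separate positivity argument.
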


\begin{proof}
We use the notation of~\cite[Theorems~1, 2]{Staroletov},
in particular the cycle type of $\sigma$ is given
by the partition $\mu$ of $n$.

Then $\langle \sigma \rangle$ is not rich in $S_n$
if and only if one of the exceptional
cases (i)--(xi) in~\cite[Theorem~1]{Staroletov}
yields an irreducible representation $\rho$ of $S_n$
such that $1$ is not an eigenvalue of $\rho(\sigma)$.
Analogously,
$\langle \sigma \rangle$ is not rich in $A_n$
if and only if one of the exceptional
cases (i)--(x) in~\cite[Theorem~2]{Staroletov}
yields an irreducible representation $\rho$ of $A_n$
such that $1$ is not an eigenvalue of $\rho(\sigma)$.

Assume that we are in the case $S_n$.
Only the exceptional cases (iii), (iv), (v), (vii), and (x)
in~\cite[Theorem~1]{Staroletov} exclude the eigenvalue $1$.
Disregarding those $\mu$ that belong to elements outside $A_n$,
we get what is claimed in part (i).

Assume that we are in the case $A_n$.
Only the exceptional cases (ii), (iii), (vi), (vii), (ix), and (x)
in~\cite[Theorem~2]{Staroletov} exclude the eigenvalue $1$.
This yields the same $\mu$ as for the case $S_n$.
\end{proof}

In the following result, we consider the alternating group $A_n$ as a subgroup
of the symmetric group $S_k$, for $k \geq n$, using the canonical embedding.
We use an abbreviated notation of partitions, the entry $d^k$ denotes
$k$ parts of length $d$.

\begin{thm}\label{richA_n}
Let $n > 2$. With the convention above, $A_n$ is rich in $S_{(n-1)^2 + 1}$ 
but not in $S_{(n-1)^2}$.
\end{thm}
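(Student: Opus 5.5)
The plan is to reduce richness of $A_n$ in $S_k$ to a purely combinatorial condition on partitions of $k$, using the branching rule for symmetric groups. Throughout, $A_n \leq S_n \leq S_k$, where $S_n$ acts on the first $n$ points and fixes the remaining $k-n$ points. Note that $(n-1)^2 \geq n$ for $n>2$, so $A_n$ is a proper subgroup of both groups in the statement.

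\emph{Step 1: reduce from $A_n$ to $S_n$.} Since $[S_n:A_n]=2$, we have $(1_{A_n})^{S_n} = 1_{S_n} + \mathrm{sgn}$. By Frobenius reciprocity, for $\chi \in \Irr(S_k)$,
\[
[\chi_{A_n},1_{A_n}] = [\chi_{S_n},1_{S_n}] + [\chi_{S_n},\mathrm{sgn}].
\]
Hence $\chi$ is a constituent of $(1_{A_n})^{S_k}$ if and only if $\chi_{S_n}$ contains the trivial character $\chi^{(n)}$ or the sign character $\chi^{(1^n)}$.

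\emph{Step 2: apply the branching rule.} Iterate the branching rule $S_m \downarrow S_{m-1}$ from $m=k$ down to $m=n+1$. For a partition $\lambda$ of $k$, this shows that $\chi^\mu$ is a constituent of $(\chi^\lambda)_{S_n}$ if and only if the Young diagram of $\mu$ is contained in that of $\lambda$. Indeed, each step removes one box, and every chain of removals from $\lambda$ down to a subdiagram $\mu$ is realizable. Consequently:
\begin{itemize}
\item $(\chi^\lambda)_{S_n}$ contains $1_{S_n}$ if and only if $\lambda_1 \geq n$;
\item $(\chi^\lambda)_{S_n}$ contains $\mathrm{sgn}$ if and only if $\lambda$ has at least $n$ parts.
\end{itemize}
Thus $A_n$ is rich in $S_k$ if and only if no partition $\lambda$ of $k$ satisfies both $\lambda_1 \leq n-1$ and $\ell(\lambda) \leq n-1$. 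Equivalently, no partition of $k$ fits into an $(n-1)\times(n-1)$ box.

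\emph{Step 3: count boxes.} A diagram that fits into an $(n-1)\times(n-1)$ box has at most $(n-1)^2$ boxes. Therefore, for $k=(n-1)^2+1$ every partition of $k$ violates the box condition, and $A_n$ is rich in $S_{(n-1)^2+1}$. For $k=(n-1)^2$, the square partition $((n-1)^{n-1})$ fits exactly into the box. So $\chi^{((n-1)^{n-1})}$ is not a constituent of $(1_{A_n})^{S_k}$, and $A_n$ is not rich in $S_{(n-1)^2}$.

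The only step requiring real care is Step 2, namely the justification that iterated branching yields exactly the containment criterion $\mu \subseteq \lambda$. This is standard: by induction on $k-n$, the constituents of $(\chi^\lambda)_{S_{k-1}}$ are the $\chi^{\lambda^-}$ with $\lambda^-$ obtained from $\lambda$ by removing one box. It then suffices to note that any $\mu \subseteq \lambda$ can be reached from $\lambda$ by successively removing removable boxes. All remaining steps are elementary.
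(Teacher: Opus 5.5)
Your proposal is correct and follows essentially the same route as the paper. Iterated branching shows that $(\chi^\lambda)_{S_n}$ contains the trivial or the sign character exactly when $\lambda_1 \geq n$ or $\lambda$ has at least $n$ parts; this holds for every partition of $(n-1)^2+1$ and fails for the square partition $((n-1)^{n-1})$. Your Frobenius reciprocity step and the ``fits in an $(n-1)\times(n-1)$ box'' reformulation spell out what the paper's shorter proof leaves implicit.
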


\begin{proof}
Let $\lambda = (\lambda_1,\ldots, \lambda_m)$ be a partition of 
$(n - 1)^2 + 1$. Then $\lambda_1 \geq n$ or $m \geq n$. Therefore,
by consecutively removing removable nodes, we can arrive at the partition
$(n)$ or $(1^n)$ of $n$. Hence the restriction $(V_\lambda)_{S_n}$ of the 
Specht module $V_\lambda$ contains the trivial or the
alternating $S_n$-module. Therefore $(V_\lambda)_{A_n}$ contains the 
trivial $A_n$-module.

On the other hand, consider the partition $\lambda = ((n - 1)^{n-1})$ of
$(n-1)^2$. The restriction of the Specht module $V_\lambda$ to $S_n$ involves 
neither the trivial nor the alternating module. Hence $A_n$ is not rich in $S_{(n-1)^2}$.
\end{proof}

\begin{rem}
The result above becomes false if we consider different embeddings of 
$A_n$ into larger symmetric groups:
For example, $S_6$ has a transitive subgroup isomorphic to $A_5$ which
becomes a rich subgroup of $S_{11}$ but not of $S_{10}$.
Similarly, $S_{10}$ has a transitive subgroup isomorphic to $A_5$ which
becomes a rich subgroup of $S_{12}$ but not of $S_{11}$. 

Also, $S_{15}$ has a transitive subgroup $H$ isomorphic to
$A_5$, and $H$ is not rich in $S_{17}$. On the other hand, every subgroup
of $S_{17}$ which is isomorphic to $A_5$ and not conjugate to $H$ is rich
in $S_{17}$. Altogether, $S_{17}$ has $14$ conjugacy classes of subgroups
isomorphic to $A_5$.
\end{rem}

The following consequence of Theorem~\ref{richA_n} could be of interest.

\begin{cor}
Let $G$ be a group of order $n$. Then $S_{n^2 + 2n + 2}$ has a rich subgroup
isomorphic to $G$. In particular, every finite group can be embedded into
another finite group as a rich subgroup.
\end{cor}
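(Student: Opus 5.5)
We derive the statement from Theorem~\ref{richA_n} together with the transitivity of richness from \cite[Proposition~2.9]{partI}. We use that result in the form already applied in the proof of Proposition~\ref{transitiverich}: if $H \leq G_1 \leq G_2$ and $H$ is rich in $G_1$, then $H$ is rich in $G_2$. The numerology fits exactly: with $m := n+2$ we get $(m-1)^2 + 1 = (n+1)^2 + 1 = n^2 + 2n + 2$. So the target is $A_{n+2} \leq S_{n^2+2n+2}$, which is rich by Theorem~\ref{richA_n} (applicable since $m = n+2 > 2$).

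Next we embed $G$ into $A_{n+2}$. By Cayley's theorem $G$ embeds in $S_n$ via the regular representation. We then map $S_n$ into $A_{n+2}$ by sending $\pi \mapsto \pi$ if $\pi$ is even, and $\pi \mapsto \pi \cdot (n+1\ n+2)$ if $\pi$ is odd. This is an injective homomorphism, because the transposition $(n+1\ n+2)$ commutes with $S_n$ and the sign is a homomorphism. Call the image of $G$ in $A_{n+2}$ by $\tilde G$; it is isomorphic to $G$.

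It remains to show that $\tilde G$ is rich in $A_{n+2}$. Here we must take care, because richness needs a \emph{proper} subgroup and requires every irreducible character of $A_{n+2}$ to contain $1$ on restriction to $\tilde G$. In general that fails: a regular copy of a group need not be rich in $A_{n+2}$. So a plain chain $\tilde G \leq A_{n+2} \leq S_{n^2+2n+2}$ does not immediately work.

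The better route is to bypass the middle step. For every irreducible character $\chi$ of $S := S_{n^2+2n+2}$, the restriction $\chi_{A_{n+2}}$ contains $1_{A_{n+2}}$; this is exactly what Theorem~\ref{richA_n} gives. Restricting further to $\tilde G \leq A_{n+2}$, the trivial constituent $1_{A_{n+2}}$ restricts to $1_{\tilde G}$, so $[\chi_{\tilde G}, 1_{\tilde G}] \geq [\chi_{A_{n+2}}, 1_{A_{n+2}}] > 0$. This is just the elementary fact that a subgroup of a rich subgroup is rich (if proper), which follows from Frobenius reciprocity. Since $\tilde G \leq A_{n+2} < S$ is proper, $\tilde G$ is rich in $S_{n^2+2n+2}$.

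The final sentence of the statement follows at once, since every finite group $G$ has some order $n$. The only point needing care is that the embedding lands in $A_{n+2}$, which explains the shift from $n$ to $n+2$. The trivial case $n=1$ is also covered, since Theorem~\ref{richA_n} with $m=3$ applies.
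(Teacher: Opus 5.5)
Your proof is correct and follows the paper's route: embed $G$ regularly in $S_n$, push it into $A_{n+2}$, and apply Theorem~\ref{richA_n} using $(n+1)^2+1 = n^2+2n+2$. The only difference is the last step, that a proper subgroup of a rich subgroup is again rich. You verify it directly by restricting the trivial constituent, whereas the paper simply cites \cite[Proposition~2.9]{partI}.
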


\begin{proof}
The regular permutation representation embeds $G$ into $S_n$ and hence
into $A_{n+2}$. Since $n^2 + 2n + 2 = (n + 2 - 1)^2 + 1$,
the claim follows then by Theorem~\ref{richA_n} above
and by~\cite[Proposition~2.9]{partI}.
\end{proof}


A natural question is which \emph{Young subgroups} of alternating groups
are rich in symmetric groups.
For a partition $\lambda = (\lambda_1, \lambda_2, \ldots, \lambda_k)$ of $n$,
we denote by $S_{\lambda}$ the Young subgroup
$S_{\lambda_1} \times S_{\lambda_2} \times \cdots \times S_{\lambda_k}$
of $S_n$,
and set $A_{\lambda} = S_{\lambda} \cap A_n$.
By Theorem~\ref{richA_n}, any nontrivial $A_{\lambda}$ is rich
in symmetric groups of sufficiently large degree,
and we can ask for the minimal degree of such a
rich embedding of $A_{\lambda}$.

It turns out that the answer can be formulated in terms of the
dominance order of partitions.
For a partition $\lambda = (\lambda_1, \lambda_2, \ldots)$
and a positive integer $i$, let $s_i(\lambda) = \sum_{k=1}^i \lambda_k$.
We say that $\lambda$ dominates a partition $\mu = (\mu_1, \mu_2, \ldots)$
if $s_i(\lambda) \geq s_i(\mu)$ holds for all $i$,
and denote this by $\lambda \dominates \mu$.
Dominance defines a lattice structure on the partitions of $n$,
see \cite[1.4.16]{JamesKerber}.

The key argument for deciding the richness of a Young subgroup of $S_n$
is as follows.

\begin{lem}\label{lemma:richyoung}
For a partition $\lambda$ of $n$,
$A_{\lambda}$ is rich in $S_n$ if and only if each partition $\mu$ of $n$
satisfies $\mu \dominates \lambda$ or $\mu' \dominates \lambda$.
\end{lem}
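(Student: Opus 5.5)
The plan is to translate richness of $A_\lambda$ into a statement about multiplicities of irreducible characters in two permutation characters, and then read those multiplicities off from Young's rule. Since $A_\lambda = S_\lambda \cap A_n$ has index at most $2$ in $S_\lambda$, the induced character $(1_{A_\lambda})^{S_\lambda}$ equals $1_{S_\lambda} + \varepsilon_{S_\lambda}$ when $S_\lambda \not\subseteq A_n$, where $\varepsilon$ is the sign character. When $S_\lambda \subseteq A_n$, that is, when $\lambda = (1^n)$, we have $A_\lambda = S_\lambda$, which is trivial; I will treat this degenerate case separately, since there both sides of the equivalence hold or fail trivially. By transitivity of induction we get
\[
  (1_{A_\lambda})^{S_n} = (1_{S_\lambda})^{S_n} + (\varepsilon_{S_\lambda})^{S_n}
  = (1_{S_\lambda})^{S_n} + \varepsilon_{S_n}\cdot (1_{S_\lambda})^{S_n},
\]
using that $\varepsilon_{S_\lambda}$ is the restriction of $\varepsilon_{S_n}$. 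Consequently $\chi^\mu$ is a constituent of $(1_{A_\lambda})^{S_n}$ if and only if $\chi^\mu$ or $\chi^\mu\varepsilon = \chi^{\mu'}$ is a constituent of the permutation character $(1_{S_\lambda})^{S_n}$.

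The second step is Young's rule: $[(1_{S_\lambda})^{S_n}, \chi^\mu]$ is the Kostka number $K_{\mu\lambda}$, the number of semistandard tableaux of shape $\mu$ and content $\lambda$. Combined with the standard fact that $K_{\mu\lambda} \neq 0$ if and only if $\mu \dominates \lambda$, this shows that every $\chi^\mu$ occurs in $(1_{A_\lambda})^{S_n}$ exactly when, for each $\mu$, either $\mu \dominates \lambda$ or $\mu' \dominates \lambda$. That is precisely the stated condition. One also has to note that richness requires $A_\lambda$ to be a proper subgroup, which is automatic for $n \geq 2$.

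The only part needing real justification is the equivalence between $K_{\mu\lambda}>0$ and $\mu \dominates \lambda$, and I would cite it, for instance from \cite{JamesKerber}. If a self-contained argument were wanted, there are two directions. If a semistandard tableau exists, then entries $\leq i$ lie in the first $i$ rows, which gives $s_i(\mu) \geq s_i(\lambda)$. Conversely, for $\mu \dominates \lambda$ one builds a tableau row by row, or inducts along a chain of elementary raising moves in the dominance lattice. I expect this to be the only obstacle; everything else is formal manipulation of induced characters.

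Finally I would recheck the edge case $S_\lambda \subseteq A_n$. When $\lambda = (1^n)$, every $\mu$ dominates $\lambda$, so the condition holds. On the other side, $A_\lambda = 1$ and the regular character contains every irreducible character, so $A_\lambda$ is rich. The two sides therefore agree in this case as well.
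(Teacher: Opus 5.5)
Your proposal is correct and follows the paper's proof. You write $(1_{A_\lambda})^{S_\lambda}=1_{S_\lambda}+\varepsilon_{S_\lambda}$ and induce to get $(1_{S_\lambda})^{S_n}+\varepsilon_{S_n}\cdot(1_{S_\lambda})^{S_n}$; the strong form of Young's rule then says the constituents of the first summand are exactly the $\chi^\mu$ with $\mu \dominates \lambda$, and tensoring with the sign character turns $\mu$ into $\mu'$.

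Your separate treatment of $\lambda=(1^n)$ is more careful than the paper, which leaves these points implicit. So is your remark that $A_\lambda$ must be a proper subgroup: this genuinely fails for $n=1$, so the lemma as stated needs $n\geq 2$.
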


\begin{proof}
Let $H = A_{\lambda}$, $S = S_{\lambda}$, and $G = S_n$.
The character $(1_S)^G$ is afforded by the permutation module $M^{\lambda}$
of $G$, and its irreducible constituents correspond to the partitions
$\mu$ that dominate $\lambda$,
by \cite[Cor.~2.2.22]{JamesKerber},
a strong version of Young's rule.
We have $(1_H)^S = 1_S + \varepsilon_S$ for a linear character $\varepsilon_S$
of $S$ that extends to the sign character $\varepsilon_G$ of $G$,
thus $(1_H)^G = (1_S)^G + \varepsilon_G \cdot (1_S)^G$.
The irreducible constituents of $\varepsilon_G \cdot (1_S)^G$ then
correspond to the conjugate partitions $\mu'$ such that $\mu$
occurs in $(1_S)^G$.
\end{proof}

\begin{example}\label{example:young}
Consider the partition $\lambda = (2^n)$ of $2n$, with $n > 1$.

For a partition $\mu$ of $2n$,
$\mu \dominates \lambda$ if and only if
$\lambda' = (n, n) \dominates \mu'$,
which happens if and only if $\mu$ has at most $n$ rows.
Analogously, $\mu' \dominates \lambda$ if and only if
$\mu$ has at most $n$ columns.
Since each partition of $2n$ has at most $n$ rows or at most $n$ columns,
$A_{\lambda}$ is rich in $S_{2n}$.
\end{example}

In the following, we generalize the idea of Example~\ref{example:young}.

\begin{lem}
Write $n = k^2 + 2x - e$ with $e \in \{ 0, 1 \}$ and $0 \leq 2x-e \leq 2k$.
(Then $e \leq x \leq k$.)
Set $l = \lfloor n/2 + 1 \rfloor$.

Consider the partitions
$\sigma = ((k+1)^x, k^{k-x}, x-e)$ and $\kappa = (l, 1^{n-l})$ of $n$,
and define $\Lambda = \Lambda_n = \inf(\sigma, \kappa)$.
Then $\Lambda$ and its conjugate $\Lambda' = \Lambda'_n$
have the following forms.

\begin{itemize}
\item[(I)]
  For odd $k$ and $0 \leq x \leq (k-1)/2$:
  \begin{eqnarray*}
      \Lambda  & = & ((k+1)^x, k^{(k+1)/2 - x}, 1^{k(k-1)/2 + x - e}), \\
      \Lambda' & = & ((k^2+1)/2+x-e, ((k+1)/2)^{k-1}, x).
  \end{eqnarray*}
\item[(II)]
  For odd $k$ and $(k+1)/2 \leq x \leq k$:
  \begin{eqnarray*}
      \Lambda  & = & ((k+1)^{(k+1)/2}, x-(k-1)/2, 1^{k(k-1)/2-1+x-e}), \\
      \Lambda' & = & ((k^2+1)/2+x-e, ((k+3)/2)^{x-(k+1)/2}, ((k+1)/2)^{(3k+1)/2-x}).
  \end{eqnarray*}
\item[(III)]
  For even $k$ and $e \leq x < k/2$:
  \begin{eqnarray*}
      \Lambda  & = & ((k+1)^x, k^{k/2-x}, k/2+1-e, 1^{k(k-1)/2+x-1}), \\
      \Lambda' & = & (k^2/2+x, (k/2+1)^{k/2-e}, (k/2)^{k/2-1+e}, x).
  \end{eqnarray*}
\item[(IV)]
  For even $k$ and $k/2 \leq x \leq k$:
  \begin{eqnarray*}
      \Lambda  & = & ((k+1)^{k/2}, x-e+1, 1^{k(k-1)/2+x-1}), \\
      \Lambda' & = & (k^2/2 + x, (k/2+1)^{x-e}, (k/2)^{k-x+e}).
  \end{eqnarray*}
\end{itemize}

(Note that in (I) and (III), the border case $x = 0$ means
that $\Lambda$ has no rows of length $k+1$;
in (II), the border case $x = (k+1)/2$ means
that all rows of $\Lambda$ have length $k+1$ or $1$;
and in (IV), the border case $(x, e) = (k, 0)$ means
that $\Lambda$ has $k/2+1$ rows of length $k+1$.)
\end{lem}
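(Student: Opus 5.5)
The plan is to compute the meet $\inf(\sigma,\kappa)$ in the dominance lattice through partial sums. For partitions $\alpha,\beta$ of $n$, the meet is determined by $s_i(\inf(\alpha,\beta)) = \min(s_i(\alpha), s_i(\beta))$ whenever this pointwise minimum is a concave sequence. In general one takes the largest concave minorant, as in \cite[1.4.16]{JamesKerber}. So I will first write down the two partial-sum sequences:
\[
s_i(\kappa) = \min(l+i-1, n)
\]
for $i\geq 1$, and $s_i(\sigma) = i(k+1)$ for $i\leq x$, $s_i(\sigma)=x+ik$ for $x\leq i\leq k$, and $s_i(\sigma)=n$ for $i\geq k+1$. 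Since $\kappa$ is a hook, its partial sums grow by exactly $1$ after the first row. The pointwise minimum therefore follows $\sigma$ while $s_i(\sigma)\leq l+i-1$, and then switches to $l+i-1$. Concavity of the minimum has to be checked only at the crossover index $j$. The increments before $j$ are $\geq k$, the increment at $j$ is some value between $1$ and $k+1$, and the increments afterwards are $1$. Such a sequence is automatically nonincreasing, so the minimum is concave and the meet equals the minimum. If at the crossover the increment of the minimum slightly exceeded the previous one, I would replace the minimum by its concave hull, but I expect this not to happen.

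The main computation is locating $j$, the first index with $s_j(\sigma) \geq l+j-1$, and the leftover row length $s_j-s_{j-1}$. Here $l = \lfloor n/2\rfloor +1$ with $n=k^2+2x-e$. The parity of $n$ equals the parity of $k^2-e$, so $l = (k^2-e)/2 + x + 1$ when $k$ and $e$ have the same parity, and otherwise $l$ has an extra $1/2$ rounded down. The four cases (I)--(IV) should come out of the following comparisons. For odd $k$, the inequality $i(k+1)\leq l+i-1$ for $i\leq x$ leads to $ik \leq (k^2+1)/2 + x - e - 1$. This separates whether all $x$ long rows fit before the crossover ($x\leq (k-1)/2$, case (I)) or the crossover occurs within the $(k+1)$-rows, at $i=(k+1)/2$ (case (II)). Analogous computations with $k/2$ give (III) and (IV). In each case I will then read off $\Lambda$: the rows of $\sigma$ up to index $j-1$, the leftover part $l+j-1-s_{j-1}(\sigma)$ (for example $x-(k-1)/2$ in (II) and $k/2+1-e$ in (III)), and then $1$'s to total $n$. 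The multiplicity of $1$'s is simply $n$ minus the sum of the earlier parts, and I will verify it against the stated exponents.

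The conjugates $\Lambda'$ then follow from the shapes. $\Lambda$ consists of a rectangle-like block of rows of length $k+1$ or $k$, a partial row, and a tail column of $1$'s. The first column of $\Lambda$ has length equal to the number of parts. The next columns have length equal to the number of rows of length at least the column index, which gives the blocks $((k+1)/2)^{k-1}$ and so on, and the last column has length $x$ (the number of $(k+1)$-rows), or the analogous value. The main obstacle is bookkeeping. I need to check that the boundary cases listed in the parenthetical note are consistent, such as $x=0$, $x=(k+1)/2$ and $(x,e)=(k,0)$. I also need to check that the leftover part does not exceed the preceding row length, for instance $k/2+1-e\leq k$, which requires $k\geq 2$ when $k$ is even. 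For small $k$, namely $k\leq 2$, I would verify the formulas directly.
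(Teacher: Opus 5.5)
Your plan is essentially the paper's proof: compare $s_i(\sigma)$ with $s_i(\kappa)$, show $\Lambda_i=\sigma_i$ for $i\le\lfloor (k+1)/2\rfloor$, compute the single leftover part $s_j(\kappa)-s_{j-1}(\sigma)$ at $j=\lfloor (k+1)/2\rfloor+1$ in each of (I)--(IV), and fill up with $1$'s; you also spell out the conjugates and the meet-via-partial-sums step, which the paper leaves implicit. Two small corrections: a pointwise minimum of concave sequences is always concave, so no hull is ever needed (the leftover is at most $\sigma_j\le\sigma_{j-1}$ because $s_j(\kappa)\le s_j(\sigma)$), and for odd $k$ one has $l=(k^2+1)/2+x$ independently of $e$, so the extra $-e$ in your inequality is a slip.
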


\begin{proof}
We have $s_i(\sigma) = n$ for $i \geq k+1$,
$s_i(\kappa) = n$ for $i \geq n-l+1 = \lfloor (n+1)/2 \rfloor$, and
the equalities
\begin{eqnarray*}
   s_i(\sigma) & = & \left\{ \begin{array}{lcl}
                          i(k+1) & ; & 1 \leq i \leq x \\
                          ik+x   & ; & x < i \leq k
                        \end{array} \right. \\
   s_i(\kappa) & = & \left\{ \begin{array}{lcl}
           (k^2-1)/2+x+i & ; & k \textrm{\ odd}, 1 \leq i \leq (n+1)/2 \\
           k^2/2+x+i-e   & ; & k \textrm{\ even}, 1 \leq i \leq (n+1)/2
                        \end{array} \right.
\end{eqnarray*}
hold.

\begin{itemize}
\item
  If $i \leq k/2$ then
  \[
     s_i(\sigma) \leq ik + x \leq k^2/2 + x \leq s_1(\kappa) \leq s_i(\kappa),
  \]
  and if $i = (k+1)/2$ then $s_i(\sigma) \leq k(k+1)/2 + x = s_i(\kappa)$.

  Hence $s_i(\sigma) \leq s_i(\kappa)$ holds for
  $1 \leq i \leq \lfloor (k+1)/2 \rfloor$,
  which implies $(\Lambda)_i = \sigma_i$ for these $i$.
\item
  Consider $i = \lfloor (k+1)/2 + 1 \rfloor$.
  \begin{itemize}
  \item
    If $i = k/2 + 1$ then $s_i(\kappa) = k(k+1)/2 + x + 1 - e$.

    For $i \leq x$,
    we have
    \[
      s_i(\sigma) = i(k+1) = k(k+3)/2 + 1 \geq s_i(\kappa)
    \]
    since $x-e \leq k$.

    For $x < i$,
    we have
    \[
      s_i(\sigma) = ik + x = k^2/2 + k + x \geq s_i(\kappa).
    \]
  \item
    If $i = (k+3)/2$ then
    $s_i(\kappa) = (k^2-1)/2 + x + (k+3)/2 = k(k+1)/2 + x + 1$.

    For $i \leq x$,
    we have
    \[
      s_i(\sigma) = \frac{(k+3)(k+1)}{2}
                  > \frac{k(k+3)}{2} + 1
                  \geq \frac{k^2-1}{2} + x + i = s_i(\kappa),
    \]
    since $x \leq k$.

    For $x < i$,
    we have
    \[
      s_i(\sigma) = k(k+3)/2 + x \geq s_i(\kappa).
    \]
  \end{itemize}
  Thus we get $s_i(\kappa) \leq s_i(\sigma)$
  and hence $(\Lambda)_i = s_i(\kappa) - s_{i-1}(\sigma)$,
  which is equal to
  \[
      \begin{array}{llcl}
        ((k^2-1)/2+x+i) - ((i-1)k+x) & = & 1
            & \textrm{in (I),} \\
        ((k^2-1)/2+x+i) - (i-1)(k+1) & = & x - (k-1)/2
            & \textrm{in (II),} \\
        (k^2/2+x+i-e) - ((i-1)k + x) & = & k/2 + 1 - e
            & \textrm{in (III),} \\
        (k^2/2+x+i-e) - (i-1)(k+1)   & = & x+1-e
            & \textrm{in (IV).}
      \end{array}
  \]
\item
  For $\lfloor (k+1)/2 + 1 \rfloor < i$,
  we have $\kappa_i \leq 1$, which implies $s_i(\kappa) \leq s_i(\sigma)$
  and thus $(\Lambda)_i \leq \kappa_i \leq 1$.
\end{itemize}
\end{proof}

\begin{example}
The values of $\Lambda_n$ for small $n$ are listed in Table~\ref{Table:Lambda}.

\begin{table}           
\caption{Partitions $\Lambda_n$, for small $n$}
\label{Table:Lambda}
\hspace*{2cm}
\begin{minipage}{3cm}  
\scriptsize           
\[
  \begin{array}[t]{r|l}
   n & \Lambda_n \\ \hline 
   1 & (1^1) \\
   2 & (2^1) \\
   3 & (2^1,1^1) \\
   4 & (2^2) \\
   5 & (3^1,1^2) \\
   6 & (3^1,2^1,1^1) \\
   7 & (3^1,2^1,1^2) \\
   8 & (3^2,1^2) \\
   9 & (3^2,1^3) \\
  10 & (4^1,3^1,1^3)
  \end{array}
\]
\end{minipage}
\
\begin{minipage}{3cm}
\scriptsize
\[
  \begin{array}[t]{r|l}
   n & \Lambda_n \\ \hline 
  11 & (4^1,3^1,1^4) \\
  12 & (4^2,1^4) \\
  13 & (4^2,1^5) \\
  14 & (4^2,2^1,1^4) \\
  15 & (4^2,2^1,1^5) \\
  16 & (4^2,3^1,1^5) \\
  17 & (5^1,4^1,2^1,1^6) \\
  18 & (5^1,4^1,3^1,1^6) \\
  19 & (5^2,2^1,1^7) \\
  20 & (5^2,3^1,1^7)
  \end{array}
\]
\end{minipage}
\
\begin{minipage}{3cm}
\scriptsize
\[
  \begin{array}[t]{r|l}
   n & \Lambda_n \\ \hline 
  21 & (5^2,3^1,1^8) \\
  22 & (5^2,4^1,1^8) \\
  23 & (5^2,4^1,1^9) \\
  24 & (5^3,1^9) \\
  25 & (5^3,1^{10}) \\
  26 & (6^1,5^2,1^{10}) \\
  27 & (6^1,5^2,1^{11}) \\
  28 & (6^2,5^1,1^{11}) \\
  29 & (6^2,5^1,1^{12}) \\
  30 & (6^3,1^{12})
  \end{array}
\]
\end{minipage}
\
\begin{minipage}{3cm}
\scriptsize
\[
  \begin{array}[t]{r|l}
   n & \Lambda_n \\ \hline 
  31 & (6^3,1^{13}) \\
  32 & (6^3,2^1,1^{12}) \\
  33 & (6^3,2^1,1^{13}) \\
  34 & (6^3,3^1,1^{13}) \\
  35 & (6^3,3^1,1^{14}) \\
  36 & (6^3,4^1,1^{14}) \\
  37 & (7^1,6^2,3^1,1^{15}) \\
  38 & (7^1,6^2,4^1,1^{15}) \\
  39 & (7^2,6^1,3^1,1^{16}) \\
  40 & (7^2,6^1,4^1,1^{16})
  \end{array}
\]
\end{minipage}
\end{table}
\end{example}

\begin{lem}\label{lem:domination}
For any partition $\mu$ of $n$,
$\mu \dominates \Lambda_n$ or $\mu' \dominates \Lambda_n$.
\end{lem}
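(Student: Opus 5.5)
By the definition of $\Lambda_n=\inf(\sigma,\kappa)$ in the dominance lattice, any partition that dominates $\sigma$ or dominates $\kappa$ also dominates $\Lambda_n$. It therefore suffices to show that every $\mu \vdash n$ satisfies at least one of
\[
\mu \dominates \sigma,\qquad \mu\dominates\kappa,\qquad \mu'\dominates\sigma,\qquad \mu'\dominates\kappa .
\]
This avoids working with the explicit shapes (I)--(IV) of the previous lemma except as a consistency check.

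\textbf{Step 1: the $\kappa$-alternative.} Since $\kappa=(l,1^{n-l})$, we have $s_i(\kappa)=\min(n,\,l+i-1)$. For any $\mu$ we also have $s_i(\mu)\geq \min(n,\,\mu_1+i-1)$, because every further nonzero part contributes at least $1$. Hence $\mu_1\geq l$ implies $\mu\dominates\kappa$. Dually, $\mu'_1=\ell(\mu)\geq l$ implies $\mu'\dominates\kappa$. So we may assume
\[
\mu_1\leq l-1=\lfloor n/2\rfloor \quad\text{and}\quad \ell(\mu)\leq\lfloor n/2\rfloor .
\]

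\textbf{Step 2: reduction to a counting problem.} Suppose $\mu\not\dominates\sigma$ and $\mu'\not\dominates\sigma$. Since $s_i(\sigma)=n$ for $i\geq k+1$, there are indices $i,j\leq k$ with
\[
s_i(\mu)<s_i(\sigma)\leq i(k+1) \quad\text{and}\quad s_j(\mu')<s_j(\sigma).
\]
Because $s_i(\mu)<i(k+1)$, some part among $\mu_1,\dots,\mu_i$ is at most $k$, so $\mu_i\leq k$ and all rows below row $i$ have length at most $k$. Similarly, all columns to the right of column $j$ have length at most $k$. Now split the $n$ cells of the Young diagram of $\mu$ into three pieces: the first $i$ rows (fewer than $s_i(\sigma)$ cells), the first $j$ columns outside those rows, and the block of rows $>i$ and columns $>j$.

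\textbf{Step 3: deriving the contradiction.} I would bound the three pieces as follows.
\begin{itemize}
\item[(a)] The lower-left arm (first $j$ columns, rows $>i$) has at most $j(\ell(\mu)-i)$ cells, with $\ell(\mu)\leq\lfloor n/2\rfloor$.
\item[(b)] The first $i$ rows contain fewer than $s_i(\sigma)$ cells, and each of these rows has length at most $\mu_1\leq\lfloor n/2\rfloor$.
\item[(c)] The block of rows $>i$ and columns $>j$ lies inside an array with rows and columns of length at most $k$. Moreover, its row sums are controlled by $s_i(\mu)<s_i(\sigma)$ and its column sums by $s_j(\mu')<s_j(\sigma)$.
\end{itemize}
Using $s_i(\sigma)=ik+\min(i,x)$ and $n=k^2+2x-e$, I expect the sum of these bounds to be less than $n$, which is a contradiction. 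Heuristically, the extremal $\mu$ consists of a roughly $k\times k$ square together with a long arm and a long leg, and this is exactly why $\sigma$ and the hook $\kappa$ appear. The exact inequality depends on the parity of $k$ and on the position of $x$ relative to $k/2$. I would therefore split into the same four cases (I)--(IV) as in the previous lemma, and check the boundary cases against the small values of $n$ in Table~\ref{Table:Lambda}.

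The main obstacle is Step 3: producing a sharp enough inequality, since the bound must be tight precisely at $\mu=\Lambda_n$ and $\mu=\Lambda_n'$. If the crude three-piece estimate is too weak, the first thing I would try is to choose $i$ and $j$ minimal with the failing property. Minimality gives $s_{i-1}(\mu)\geq s_{i-1}(\sigma)$, which forces $\mu_i\leq\sigma_i-1$, and the analogous statement holds for $\mu'$. With these bounds I would redo the count column by column.
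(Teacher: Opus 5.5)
Your opening reduction replaces the lemma by a statement that is false, so Steps 2 and 3 cannot be completed. The meet in the dominance lattice is computed by $s_i(\Lambda_n)=\min\{s_i(\sigma),s_i(\kappa)\}$. Hence the condition $\mu\dominates\Lambda_n$ only asks that, for each index $i$ \emph{separately}, one of $s_i(\mu)\geq s_i(\sigma)$ or $s_i(\mu)\geq s_i(\kappa)$ holds. Your reduction demands instead that one of the two bounds works for all $i$ at once.

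Take $n=10$, so $k=3$, $x=e=1$, $\sigma=(4,3,3)$, $\kappa=(6,1^4)$, $\Lambda_{10}=(4,3,1^3)$ and $\Lambda_{10}'=(5,2,2,1)$. For $\mu=\Lambda_{10}$ we have $s_1(\mu)=4<6=s_1(\kappa)$, $s_3(\mu)=8<10=s_3(\sigma)$, $s_1(\mu')=5<6$ and $s_3(\mu')=9<10$. So none of your four alternatives holds, although of course $\mu\dominates\Lambda_{10}$. The same happens for $n=12$ with $\mu=\Lambda_{12}=(4^2,1^4)$, $\mu'=(6,2^3)$, $\sigma=(4,4,3,1)$, $\kappa=(7,1^5)$.

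In particular, the claim left after your Step 1 fails: $\mu_1\leq\lfloor n/2\rfloor$ and $\ell(\mu)\leq\lfloor n/2\rfloor$ do not force $\mu\dominates\sigma$ or $\mu'\dominates\sigma$, as $\mu=(4,3,1^3)$ shows. So no refinement of the three-piece count in Step 3 can give the contradiction you expect; that count is also only announced, not carried out.

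The missing idea is that you must compare $\mu$ with $\Lambda_n$ itself, index by index, rather than with $\sigma$ and $\kappa$ separately. The paper does this using the explicit shape $\Lambda=((k+1)^p,k^q,r,1^s)$ from the preceding lemma.
\begin{enumerate}
\item It first shows that $\mu\dominates\Lambda$ as soon as $s_i(\mu)\geq s_i(\Lambda)$ holds at the three corner indices $i=p$, $p+q$, $p+q+1$. On a block of equal parts of $\Lambda$, a deficit $s_i(\mu)<s_i(\Lambda)$ forces $\mu_i<\Lambda_i$ and therefore persists to the end of the block. After index $p+q+1$ all parts of $\Lambda$ equal $1$.
\item If one of the three corner inequalities fails, that failure bounds every $s_i(\mu)$ from above. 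Comparing with the explicit partial sums of $\Lambda'$ in cases (I)--(IV) then gives $\Lambda'\dominates\mu$, i.e.\ $\mu'\dominates\Lambda$.
\end{enumerate}
This is where the paper handles the interaction between the near-square part of $\Lambda$ (coming from $\sigma$) and its long first column (coming from $\kappa$). Your split into $\sigma$- and $\kappa$-alternatives discards exactly that interaction.
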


\begin{proof}
In general, $\Lambda = ((k+1)^p, k^q, r, 1^s)$ for nonnegative integers
$p, q, r, s$, with $p \leq x$, $p+q \in \{ k/2, (k+1)/2, k/2+1 \}$,
and $r, s \geq 1$.

\textbf{Step 1:}
We claim that $\mu \dominates \Lambda$
if $s_p(\mu) \geq s_p(\Lambda)$ and
$s_{p+q}(\mu) \geq s_{p+q}(\Lambda)$ and
$s_{p+q+1}(\mu) \geq s_{p+q+1}(\Lambda)$.

For otherwise, we have $s_i(\mu) < s_i(\Lambda)$ for some $i$.
We distinguish three cases:
\begin{itemize}
\item
  If $1 \leq i \leq p$ then $\mu_i < \Lambda_i$ and then
  \[
     s_p(\mu) = s_i(\mu) + \sum_{j=i+1}^p \mu_j
              < s_i(\Lambda) + (p-i)\Lambda_1
              = s_p(\Lambda).
  \]
\item
  If $p < i \leq p+q$ then $\mu_i < \Lambda_i$ and then
  \[
     s_{p+q}(\mu) = s_i(\mu) + \sum_{j=i+1}^{p+q} \mu_j
                  < s_i(\Lambda) + (p+q-i)\Lambda_{p+1}
                  = s_{p+q}(\Lambda).
  \]
\item
  If $i > p+q$ then $(\Lambda)_i = 1$, and either $\mu_i \geq 1$
  or $s_i(\mu) = n \geq s_i(\Lambda)$.
\end{itemize}

\textbf{Step 2:}
We show that $\Lambda' \dominates \mu$ in all other cases,
which is equivalent to $\mu' \dominates \Lambda$.

\begin{itemize}
\item
  Assume $s_p(\mu) < s_p(\Lambda)$.

  \begin{itemize}
  \item
    If $1 \leq i \leq p$ then
    \[
       s_i(\mu) \leq s_p(\mu) - (p-i)
                \leq s_p(\Lambda) - 1 - p + i
                =    pk + i - 1
    \]
    holds.
    Since $s_i(\Lambda') \geq (\Lambda')_1 + (i-1)$,
    it suffices to show that $pk \leq (\Lambda')_1$.

    In all situations except (IV) with $(x, e) = (k, 0)$,
    we have $p \leq \min\{ x, (k+1)/2 \}$ and
    $(\Lambda')_1 \geq x + (k^2-1)/2$,
    thus
    \[
       pk = p  + (k-1)p \leq x + (k-1)(k+1)/2 \leq (\Lambda')_1.
    \]
    In the exceptional situation, we have $pk = (k/2+1)k = (\Lambda')_1$.
  \item
    If $p+1 \leq i \leq k$ then $\mu_i \leq k$ and
    \[
       s_i(\mu) \leq s_p(\mu) + (i-p)k \leq p(k+1) - 1 + (i-p)k = ik + p - 1.
    \]
    Since $s_i(\Lambda') \geq (\Lambda')_1 + (i-1)(k+1)/2$
    and $(\Lambda')_1 \geq (k^2-1)/2 + x$, we have
    \begin{eqnarray*}
       s_i(\Lambda') - s_i(\mu) & \geq & (k^2-1)/2 + x + (i-1)(k+1)/2 - ik - p + 1 \\
                                & = & k^2/2 + x - ik/2 - k/2 + i/2 - p \\
                                & = & (k-1)(k-i)/2 + (x - p) \\
                                & \geq & 0.
    \end{eqnarray*}
  \end{itemize}
\item
  Assume $s_p(\mu) \geq s_p(\Lambda)$ and
  $s_{p+q}(\mu) < s_{p+q}(\Lambda)$.

  Then we are in one of the situations (I), (III), or (IV)
  (where $x = k + e -1$ and $q = 1$ in situation (IV)).
  Thus $p+q \in \{ k/2, (k+1)/2 \}$ and $x \geq p$.

  \begin{itemize}
  \item
    If $1 \leq i \leq p+q$ then
    \begin{eqnarray*}
       s_i(\mu) & \leq & s_{p+q}(\mu) - (p+q-i) \\
                & \leq & s_{p+q}(\Lambda) - 1 - (p+q) + i \\
                & = & p(k+1) + qk - (p+q) + i-1 \\
                & \leq & (p+q)(k-1) + x + (i-1) \\
                & \leq & (k^2-1)/2 + x + (i-1) \\
                & = & (\Lambda')_1 + (i-1) \\
                & \leq & s_i(\Lambda').
    \end{eqnarray*}
  \item
    If $p+q < i \leq k$ then $\mu_i \leq k-1$.
    (Note that otherwise
    $s_{p+q}(\mu) \geq s_p(\mu) + qk \geq p(k+1) + qk = s_{p+q}(\Lambda)$
    would hold, contrary to our assumption.)

    In the situations (I) and (III),
    we have
    $s_{p+q}(\Lambda) = k(p+q) + x$ and thus

    \begin{eqnarray*}
       s_i(\mu) & \leq & s_{p+q}(\mu) + (i-(p+q))(k-1) \\
                & \leq & s_{p+q}(\Lambda) -1 + (i-(p+q))(k-1) \\
                & = & k(p+q) + x - 1 + (i-(p+q))(k-1) \\
                & = & x - 1 + i(k-1) + p+q
    \end{eqnarray*}

    and hence $p+q \in \{ (k+1)/2, k/2+1 \}$ implies

    \begin{eqnarray*}
       s_i(\Lambda') - s_i(\mu) & \geq & (k^2-1)/2 + x + (i-1)(p+q)
                                     - (x - 1 + i(k-1) + p+q) \\
                                & = & (k^2+1)/2 - i(k-1) + (i-2)(p+q) \\
                                & \geq & (k^2+1)/2 - i(k-1) + (i-2)(k+1)/2 \\
                                & = & ((k-i)(k-2) + i - 1)/2 \\
                                & \geq & 0.
    \end{eqnarray*}

    In situation (IV),
    we have $p+q = k/2+1$, $x = k + e -1$, and
    $s_{p+q}(\Lambda) = (k+1)k/2 + k$ and thus

    \begin{eqnarray*}
       s_i(\mu) & \leq & s_{p+q}(\mu) + (i-(p+q))(k-1) \\
                & \leq & s_{p+q}(\Lambda) -1 + (i-(p+q))(k-1) \\
                & = & (k+1)k/2 + k - 1 + (i-(k/2+1))(k-1) \\
                & = & i(k-1)+k,
    \end{eqnarray*}

    and $i > k/2 + 1 \geq 2$ implies

    \begin{eqnarray*}
       s_i(\Lambda') - s_i(\mu) & \geq & k^2/2 + k-1 + (i-1)k/2 -(i(k-1)+k) \\
                                & = & k/2(k - i - 1) +i-1 \\
                                & \geq & 0.
    \end{eqnarray*}
  \end{itemize}
\item
  Assume $s_p(\mu) \geq s_p(\Lambda)$ and
  $s_{p+q}(\mu) \geq s_{p+q}(\Lambda)$ and
  $s_{p+q+1}(\mu) < s_{p+q+1}(\Lambda)$.

  Then we are in one of the situations (II), (III), or (IV),
  where $x < k+e-1$ holds in situation (IV).
  Thus $p+q \in \{ k/2, (k+1)/2 \}$.

  In situation (II), $s_{p+q+1}(\Lambda) = k(k+1)/2 + x + 1$
  and $\Lambda_{p+q+1} = x-(k-1)/2 \leq (k+1)/2$.
  In situation (III) or (IV), $s_{p+q+1}(\Lambda) = k(k+1)/2 + x - e + 1$.
  In situation (III), $\Lambda_{p+q+1} = k/2+1-e$.
  In situation (IV), $\Lambda_{p+q+1} = x-e+1 \leq k-1$.

  \begin{itemize}
  \item
    If $1 \leq i \leq p+q+1$ then $p+q \geq k/2$ implies
    \begin{eqnarray*}
       s_i(\mu) & \leq & s_{p+q+1}(\mu) - (p+q+1-i) \\
                & \leq & s_{p+q+1}(\Lambda) - 1 - (p+q+1-i) \\
                & \leq & k(k+1)/2 + x - (p+q+1-i) \\
                & \leq & k(k+1)/2 + x + i -1 -k/2 \\
                & = & (k^2-1)/2 + x + (i-1) + 1/2 \\
                & \leq & s_i(\Lambda') + 1/2,
    \end{eqnarray*}
    thus $s_i(\mu) \leq s_i(\Lambda')$.
  \item
    Let $k/2 + 1 = p+q+1 < i \leq k$.

    We have
    \[
      \mu_{p+q+1} = s_{p+q+1}(\mu) - s_{p+q}(\mu)
                  < s_{p+q+1}(\Lambda) - s_{p+q}(\Lambda)
                  = (\Lambda)_{p+q+1}.
    \]
    In situations (II) and (III),
    $\mu_i \leq \mu_{p+q+1} \leq \Lambda_{p+q+1} - 1 \leq (\Lambda')_i$.
    Hence $s_{p+q+1}(\mu) \leq s_{p+q+1}(\Lambda')$ implies
    $s_i(\mu) \leq s_i(\Lambda')$, by induction.

    Thus it is left to show that $s_i(\mu) \leq s_i(\Lambda')$ holds
    also in situation (IV).
    We have $p+q = k/2$ and $\mu_{p+q+1} \leq x-e \leq k-2$,
    in particular $k > 2$.
    \begin{eqnarray*}
      s_i(\Lambda') & \geq & k^2/2 + x + (i-1)k/2 \\
      s_i(\mu)      & \leq & s_{p+q+1}(\mu) + (i-(p+q+1))(k-2) \\
                    & \leq & s_{p+q+1}(\Lambda) + (i-(p+q+1))(k-2) \\
                    & \leq & k(k+1)/2 + x +1 + (i-(p+q+1))(k-2) \\
    \end{eqnarray*}
    and thus
    \begin{eqnarray*}
      s_i(\Lambda') - s_i(\mu) & \geq & k^2/2 + x + (i-1)k/2 \\
                               &      & - (k(k+1)/2 + x +1 + (i-(p+q+1))(k-2)) \\
                               & = & k(k-2)/2 - i(k-4)/2 - 3.
    \end{eqnarray*}
    Since $k \geq 4$,
    the right hand side is non-increasing for increasing $i$.
    For $i = k+1$ we get
    \[
       k(k-2)/2 - (k+1)(k-4)/2 - 3 = k/2 - 1 \geq 0,
    \]
    and we are done. \qedhere
  \end{itemize}
\end{itemize}
\end{proof}

\begin{thm}\label{RichYoung}
For a partition $\lambda$ of $n \geq 3$,
$A_{\lambda}$ is rich in $A_n$ (and hence in $S_n$)
if and only if $\Lambda_n \dominates \lambda$.
\end{thm}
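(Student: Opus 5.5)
The plan is to reduce the statement for $A_n$ to the statement for $S_n$, and then use Lemma~\ref{lemma:richyoung} together with the lattice structure of the dominance order.

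\textbf{Step 1: from $A_n$ to $S_n$.} Set $H = A_\lambda$.
\begin{itemize}
\item If $H$ is rich in $A_n$, then it is rich in $S_n$ by \cite[Proposition~2.9]{partI}.
\item Conversely, suppose $H$ is rich in $S_n$. If $\lambda = (1^n)$, then $H = 1$ and $H$ is trivially rich in $A_n$. Otherwise $\lambda$ has a part $\geq 2$, so $S_\lambda$ contains an odd permutation $g$, and $g$ normalizes $H = S_\lambda \cap A_n$.
\item Let $\psi \in \Irr(A_n)$ and choose $\chi \in \Irr(S_n)$ lying over $\psi$. Then $\chi_{A_n} = \psi$ or $\chi_{A_n} = \psi + \psi^g$.
\item Since $\chi$ is a constituent of $(1_H)^{S_n}$, we have $[\chi_H, 1_H] > 0$. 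Because $H^g = H$, we get $[(\psi^g)_H, 1_H] = [\psi_H, 1_H]$, so both summands contain $1_H$.
\item Hence $\psi$ is a constituent of $(1_H)^{A_n}$, and $H$ is rich in $A_n$.
\end{itemize}
It therefore suffices to show that $A_\lambda$ is rich in $S_n$ if and only if $\Lambda_n \dominates \lambda$.

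\textbf{Step 2: the "if" direction.} Suppose $\Lambda_n \dominates \lambda$ and let $\mu$ be any partition of $n$. By Lemma~\ref{lem:domination}, $\mu \dominates \Lambda_n$ or $\mu' \dominates \Lambda_n$. By transitivity, $\mu \dominates \lambda$ or $\mu' \dominates \lambda$. Lemma~\ref{lemma:richyoung} then gives that $A_\lambda$ is rich in $S_n$.

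\textbf{Step 3: the "only if" direction.} Suppose that every $\mu$ satisfies $\mu \dominates \lambda$ or $\mu' \dominates \lambda$. We apply this hypothesis to the two specific partitions $\kappa$ and $\sigma$ from the definition of $\Lambda_n$.

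For $\mu = \kappa$:
\begin{itemize}
\item The conjugate is $\kappa' = (n-l+1, 1^{l-1})$, again a hook.
\item Its first part satisfies $n-l+1 \leq l$, so $\kappa'_1 \leq \kappa_1$.
\item For two hooks of the same size, the one with the larger first part dominates, so $\kappa \dominates \kappa'$.
\item Hence in either case of the hypothesis we get $\kappa \dominates \lambda$.
\end{itemize}

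For $\mu = \sigma$:
\begin{itemize}
\item Read off the columns of $\sigma = ((k+1)^x, k^{k-x}, x-e)$. The first $x-e$ columns have length $k+1$, the next $k-x+e$ columns have length $k$, and column $k+1$ has length $x$.
\item So $\sigma' = ((k+1)^{x-e}, k^{k-x+e}, x)$.
\item Comparing partial sums shows $\sigma \dominates \sigma'$, since $\sigma$ has at least as many parts equal to $k+1$ and the same number of rows of length at least $k$.
\item Hence in either case of the hypothesis we get $\sigma \dominates \lambda$.
\end{itemize}

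Since dominance is a lattice order \cite[1.4.16]{JamesKerber}, $\kappa \dominates \lambda$ and $\sigma \dominates \lambda$ together give $\Lambda_n = \inf(\sigma, \kappa) \dominates \lambda$.

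\textbf{Expected obstacle.} The only points requiring care are the two comparisons $\kappa \dominates \kappa'$ and $\sigma \dominates \sigma'$. I would check them directly through the partial sums $s_i$, including the degenerate cases $x = 0$, $e = 1$, and small $k$ (where $n \geq 3$ is used). All the heavy lifting has already been done in Lemma~\ref{lem:domination}.
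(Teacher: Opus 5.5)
Your proof is correct. Steps 2 and 3 are exactly the paper's argument. Your explicit conjugates $\kappa' = (n-l+1, 1^{l-1})$ and $\sigma' = ((k+1)^{x-e}, k^{k-x+e}, x)$ are right, and they verify $\kappa \dominates \kappa'$ and $\sigma \dominates \sigma'$, which the paper only asserts.

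The genuine difference is Step 1, the passage from richness in $S_n$ to richness in $A_n$.

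\begin{itemize}
\item The paper uses $\Lambda_n \dominates \lambda$ to bound cycle lengths of elements of $A_\lambda$ by $k+1$. It deduces that $A_\lambda$ meets none of the $A_n$-classes that fuse in $S_n$, which are the cycle types with pairwise distinct odd parts. Hence $\pi = (1_H)^{A_n}$ vanishes on them, and $[\pi,\phi] = [\pi,\phi'] = \frac{1}{2}[\pi^{S_n},\chi]$.
\item You instead note that a transposition $g \in S_\lambda$ normalizes $H$. So $\pi$ is $g$-invariant, and $\psi$ and $\psi^g$ in a split restriction contain $1_H$ with the same multiplicity.
\end{itemize}

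Your route is shorter and needs no count of sums of distinct odd cycle lengths. That count needs some care in the paper when $k$ is even, since cycles of odd length $k+1$ can then occur.

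Your route also proves more. $A_\lambda$ is rich in $A_n$ if and only if it is rich in $S_n$ for \emph{every} $\lambda$, without the dominance condition. More generally, the same argument works for any $H \leq A_n$ normalized by an odd permutation. The paper's argument gives the stronger fact that $\pi$ vanishes on the split classes, but this is not needed.

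One small point to add: calling $H$ rich in $A_n$ also requires $H < A_n$, i.e.\ $\lambda \neq (n)$. This holds because $A_n$ itself is not rich in $S_n$ for $n \geq 3$: $(1_{A_n})^{S_n} = 1_{S_n} + \varepsilon$, with $\varepsilon$ the sign character.
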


\begin{proof}
By Lemma~\ref{lemma:richyoung} and Lemma~\ref{lem:domination},
$A_{\Lambda_n}$ is rich in $S_n$,
as well as $A_{\lambda}$ for each $\lambda$ with the property
$\Lambda_n \dominates \lambda$.

To show the converse, observe that
we have $\sigma \dominates \sigma'$ and $\kappa \dominates \kappa'$.
Thus $\sigma$ and $\kappa$ dominate each $\lambda$ for which
$A_{\lambda}$ is rich in $S_n$.
As a consequence, also $\Lambda_n = \inf(\sigma, \kappa)$
dominates each $\lambda$ for which $A_{\lambda}$ is rich in $G = S_n$.

It remains to show that $H = A_{\lambda}$ is rich in $A = A_n$
whenever $\Lambda_n \dominates \lambda$.

For that, set $\pi = (1_H)^A$ and consider $\phi \in \Irr(A)$.
If $\phi = \chi_A$ for some $\chi \in \Irr(G)$, where $G = S_n$, then
we know that $[\pi, \phi] = [\pi^G, \chi] > 0$.
Otherwise, $\phi + \phi' = \chi_A$ for some $\chi \in \Irr(G)$,
where $\phi, \phi' \in \Irr(A)$ are conjugate under the action of $G$,
and then $\phi$ and $\phi'$ differ at most on those conjugacy classes
of $A$ that fuse in $G$ with another class of $A$.
These classes belong to cycle types with pairwise different (and odd)
cycle lengths.

The largest possible cycle length of an element in $A_{\lambda}$ is $k+1$,
where $k = \lfloor \sqrt{n} \rfloor$.
If $k = 1$ then $n \leq 3$ holds, and nothing is to show because
$A_{\lambda}$ is trivial.
For $k \geq 2$, summing up the odd integers from $1$ to $k$
yields
\[
   1 + 3 + \cdots + (2\lfloor (k+1)/2 \rfloor - 1)
   = \lfloor (k+1)/2 \rfloor^2 < k^2 \leq n.
\]
Thus $A_{\lambda}$ does not contain elements from the classes of $A$
that fuse in $G$ with another class of $A$,
hence $\pi$ is zero on each of these classes, which implies
\[
   [\pi, \phi] = [\pi, \phi']
               = \frac{1}{2} [\pi, \chi_A]
               = \frac{1}{2} [\pi^G, \chi] > 0,
\]
and we are done.
\end{proof}

\section{The total degree}\label{sect:degrees}

We recall that $T(G) = \sum_{\chi \in \Irr(G)} \chi(1)$ is called the total
degree of a finite group $G$. Also, we always set $b(G) := \max\{\chi(1):
\chi \in \Irr(G)\}$. These notions can be generalized by replacing ordinary
representations by projective representations. For an element $\alpha$ in
the Schur multiplier of $G$, we denote by $\Irr(G,\alpha)$ the corresponding 
set of irreducible projective characters, and we set $b(G,\alpha) := 
\max\{\chi(1): \chi \in \Irr(G,\alpha)\}$ and $T(G,\alpha) := \sum_{\chi \in 
\Irr(G,\alpha)} \chi(1)$. These notions are useful when investigating $T(G)$
in the presence of a normal subgroup of $G$. 

\begin{prop}\label{TGClifford} 
Let $N$ be a normal subgroup of a finite group $G$.
For $\phi \in \Irr(N)$, 
let $G_\phi$ denote the stabilizer of $\phi$ in $G$ and 
let $\alpha_\phi$ denote the element in the Schur multiplier
of $G_\phi/N$ determined by $\phi$.
Then we have 
$$T(G) = \sum_{\phi \in {\Irr}(N)} T(G_\phi|\phi).$$
Moreover, for $\phi \in {\Irr}(N)$, we have
$$T(G_\phi|\phi) = T(G_\phi/N,\alpha_\phi) \phi(1).$$
\end{prop}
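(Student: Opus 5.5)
The plan is to obtain the first formula from the Clifford correspondence, organized by $G$-orbits on $\Irr(N)$, and the second from the projective-representation form of Clifford theory (the Clifford--Mackey--Gallagher correspondence for a character of $N$ that is stable in $G_\phi$).

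\textbf{First formula.} By Clifford's theorem, every $\chi\in\Irr(G)$ lies over exactly one $G$-orbit of irreducible characters of $N$. Hence $\Irr(G)$ is the disjoint union of the sets $\Irr(G|\phi)$, where $\phi$ runs over a set of representatives of the $G$-orbits on $\Irr(N)$. For a fixed $\phi$, the Clifford correspondence says that induction $\psi\mapsto\psi^G$ is a bijection from $\Irr(G_\phi|\phi)$ onto $\Irr(G|\phi)$. This multiplies degrees by $[G:G_\phi]$, so
\[
T(G|\phi)=[G:G_\phi]\,T(G_\phi|\phi).
\]
Now $[G:G_\phi]$ is the length of the orbit of $\phi$. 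Moreover, $T(G_{\phi^g}|\phi^g)=T(G_\phi|\phi)$, since conjugation by $g$ carries $G_\phi$ to $G_{\phi^g}$ and $\Irr(G_\phi|\phi)$ to $\Irr(G_{\phi^g}|\phi^g)$, preserving degrees. Therefore the orbit sum $\sum_{\phi'\in\phi^G}T(G_{\phi'}|\phi')$ equals $T(G|\phi)$. Summing over all orbits gives $T(G)=\sum_{\phi\in\Irr(N)}T(G_\phi|\phi)$.

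\textbf{Second formula.} Here $\phi$ is $G_\phi$-invariant, so I would take a projective representation $\mathcal{P}$ of $G_\phi$ that extends a representation affording $\phi$ and satisfies $\mathcal{P}(ng)=\mathcal{P}(n)\mathcal{P}(g)$ and $\mathcal{P}(gn)=\mathcal{P}(g)\mathcal{P}(n)$ for $n\in N$. Its factor set can be chosen constant on $N$-cosets, so it is inflated from a cocycle of $G_\phi/N$; its class is $\alpha_\phi$ (this is how $\alpha_\phi$ is defined). The standard theorem (e.g.\ Isaacs, Character Theory, Thm.~11.28, or Huppert--Blackburn) then gives the following: the map $\mathcal{Q}\mapsto \mathcal{P}\otimes\mathcal{Q}$, for $\mathcal{Q}$ an irreducible projective representation of $G_\phi/N$ with the inverse factor set, is a bijection onto the irreducible representations of $G_\phi$ lying over $\phi$. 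Consequently $\Irr(G_\phi|\phi)$ is in bijection with $\Irr(G_\phi/N,\alpha_\phi^{-1})$, with $\chi(1)=\phi(1)\theta(1)$. Summing gives $T(G_\phi|\phi)=T(G_\phi/N,\alpha_\phi^{-1})\,\phi(1)$.

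Finally, complex conjugation $\theta\mapsto\overline\theta$ is a degree-preserving bijection $\Irr(G_\phi/N,\alpha)\to\Irr(G_\phi/N,\alpha^{-1})$. So $T(G_\phi/N,\alpha_\phi^{-1})=T(G_\phi/N,\alpha_\phi)$, and the result does not depend on the sign convention for $\alpha_\phi$.

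\textbf{Main obstacle.} The only delicate point is the second step: verifying that the tensor-product correspondence is bijective and lands exactly on $\Irr(G_\phi|\phi)$. I would quote this from the literature rather than reprove it. If a self-contained argument were wanted, one checks irreducibility and distinctness via the twisted group algebra $\mathbb{C}^{\alpha}[G_\phi/N]$. Surjectivity then follows from the degree identity $\sum_{\chi\in\Irr(G_\phi|\phi)}\chi(1)^2=[G_\phi:N]\,\phi(1)^2$, which comes from $(\phi^{G_\phi})(1)=[G_\phi:N]\phi(1)$ and Frobenius reciprocity. Together with $\sum_\theta\theta(1)^2=[G_\phi:N]$ for projective characters, this shows that no further characters occur.
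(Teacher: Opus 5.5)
Your proposal is correct and follows the same route as the paper. The first identity comes from the Clifford correspondence summed over $G$-orbit representatives in $\Irr(N)$, and the second from extending $\phi$ projectively to $G_\phi$ and using the degree-multiplying correspondence with the projective characters of $G_\phi/N$. You also make explicit two points the paper leaves implicit: that $T(G_\phi|\phi)$ is constant on $G$-orbits, and that the choice between $\alpha_\phi$ and $\alpha_\phi^{-1}$ does not affect the total.
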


\begin{proof} 
 Let $R$ be a set of representatives for the $G$-orbits
on ${\rm Irr}(N)$. Then Clifford theory implies:
\[
   T(G) = \sum_{\phi \in R} T(G|\phi) 
        = \sum_{\phi \in R} [G:G_\phi] \cdot T(G_\phi|\phi)
        = \sum_{\phi \in {\rm Irr}(N)} T(G_\phi|\phi).
\]
Each character $\phi \in {\rm Irr}(N)$ extends to a projective character 
$\hat\phi$ of $G_\phi$. Let $\alpha_\phi$ be the corresponding element in 
the Schur multiplier of $G_\phi/N$.
Then the degrees of the characters in ${\rm Irr}(G_\phi|\phi)$ are 
the products of $\phi(1)$ with the degrees of the characters in 
$\Irr(G_\phi/N,\alpha_\phi)$.
\end{proof}

\begin{rem}\label{TGinequality}
Suppose we are in the situation of Proposition~\ref{TGClifford}.

\begin{itemize}
\item[(i)]
  For each $\phi \in {\rm Irr}(N)$:
  \begin{eqnarray*}
    |G_\phi/N| & = &    \sum\nolimits_{\chi\in\Irr(G_\phi/N,\alpha_\phi)}
                           \chi(1)^2 \\
               & \leq & \sum\nolimits_{\chi \in {\Irr}(G_\phi/N,\alpha_\phi)} 
                           \chi(1) \cdot b(G_\phi/N,\alpha_\phi) \\
               & = &    T(G_\phi/N,\alpha_\phi) \cdot b(G_\phi/N,\alpha_\phi),
  \end{eqnarray*}
  where equality holds if and only if all characters in 
  $\Irr(G_\phi/N,\alpha_\phi)$ have the same degree. (If $\alpha_\phi$ is 
  trivial then this means that $G_\phi/N$ is abelian.)  
  Moreover, we have
  \[
     b(G_\phi/N,\alpha_\phi) \leq T(G_\phi/N, \alpha_\phi),
  \]
  with equality if and only if $|\Irr(G_\phi/N,\alpha_\phi)| = 1$.
  (If $\alpha_\phi$ is trivial then this means that $G_\phi = N$.)

\item[(ii)]
  Similarly, we have
  \[
     b(G_\phi/N,\alpha_\phi)^2 \leq |G_\phi/N|,
  \]
  with equality if and only if $|\Irr(G_\phi/N, \alpha_\phi)| = 1$. 
  We conclude that
  \begin{eqnarray*}
    T(G) - T(G/N) & = &    \sum\nolimits_{1_N \neq \phi \in \Irr(N)} 
                              T(G_\phi/N,\alpha_\phi) \cdot \phi(1) \\
                  & \geq & \sum\nolimits_{1_N \neq \phi \in \Irr(N)} 
                              |G_\phi/N| \cdot \phi(1) / b(G_\phi/N,\alpha_\phi) \\
                  & \geq & \sum\nolimits_{1_N \neq \phi \in \Irr(N)} 
                              |G_\phi/N|^{1/2} \cdot \phi(1) \cr
                  & \geq & T(N) - 1,
  \end{eqnarray*}
  where all inequalities are equalities if and only if
  $G_\phi = N$ for every nontrivial $\phi \in \Irr(N)$.

\item[(iii)]
  Suppose that $1 < N < G$. Then $T(G) = T(G/N) + T(N) - 1$ holds
  if and only if $G$ is a Frobenius group with Frobenius kernel $N$.

  Note that $G$ is a Frobenius group with kernel $N$ (where $1 < N < G$)
  if and only if $C_G(x) \subseteq N$ holds for all $1 \not= x \in N$,
  see~\cite[pp.~99-100]{IsaacsCT}.

  An equivalent character theoretic condition is that $\phi^G \in \Irr(G)$
  holds for all nontrivial $\phi \in \Irr(N)$.
  One direction holds by~\cite[Thm.~(6.34)]{IsaacsCT},
  the other direction follows from the same argument.
  Namely, if $\phi^G \in \Irr(G)$ holds for all nontrivial $\phi \in \Irr(N)$
  then $G$ acts semiregularly on $\Irr(N) \setminus \{ 1_N \}$ and hence
  on the nontrivial conjugacy classes of $N$;
  this implies $C_G(x) \subseteq N$ for all $x \in N \setminus \{ 1 \}$.

  Now apply part~(ii).
\end{itemize}
\end{rem}

Specializing to the case $N = 1$ we obtain the following.

\begin{cor}\label{TGinequalities}
 For any finite group $G$, we have
 $$b(G)^2 \leq |G| \leq T(G) \cdot b(G) \leq T(G)^2.$$
\end{cor}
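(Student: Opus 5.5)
The plan is to read Corollary~\ref{TGinequalities} off Remark~\ref{TGinequality} in the degenerate case $N = 1$. We take $N = 1$ in Proposition~\ref{TGClifford}. Then $\Irr(N) = \{1_N\}$, the stabilizer is $G_{1_N} = G$, and the cocycle $\alpha_{1_N}$ is trivial. So $\Irr(G_\phi/N,\alpha_\phi) = \Irr(G)$, with $T(G_\phi/N,\alpha_\phi) = T(G)$ and $b(G_\phi/N,\alpha_\phi) = b(G)$.

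The middle inequality $|G| \leq T(G)\cdot b(G)$ is exactly the first chain in Remark~\ref{TGinequality}(i) for this $\phi$. Directly, we write
\[
|G| = \sum_{\chi \in \Irr(G)} \chi(1)^2 \leq b(G)\sum_{\chi\in\Irr(G)}\chi(1) = b(G)\,T(G).
\]
The last inequality $T(G)\cdot b(G) \leq T(G)^2$ follows from $b(G) \leq T(G)$, also recorded in Remark~\ref{TGinequality}(i). It holds simply because $b(G)$ is one of the summands of $T(G)$ and all summands are positive.

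The first inequality $b(G)^2 \leq |G|$ is the inequality in Remark~\ref{TGinequality}(ii) with $N=1$. It holds because $b(G)^2 = \chi(1)^2$ for some $\chi \in \Irr(G)$, and this is a single term of $\sum_{\chi}\chi(1)^2 = |G|$.

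None of these steps is an obstacle. The only point needing care is to check that for $N=1$ the projective data really reduce to the ordinary ones, i.e.\ that the cocycle $\alpha_{1_N}$ is trivial; this holds because $1_N$ extends to the linear character $1_G$.
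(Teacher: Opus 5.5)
Your proposal is correct and follows the paper's route: the paper obtains the corollary by specializing Remark~\ref{TGinequality} to $N = 1$. Your direct verifications of each inequality are what that specialization reduces to once the cocycle is trivial.
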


\begin{rem}\label{Berko}
Since $|G|$ is divisible by $d := b(G)$, one can write $|G| = d(d+e)$ where
$e$ is a nonnegative integer. Obviously, $e=0$ holds if and only if $G = 1$.
Berkovich \cite{B} proved that $e=1$ if and only if $G$ is a $2$-transitive
Frobenius group or $|G| = 2$. Hung, Lewis and Schaeffer Fry \cite{HLSF}
showed that $|G| \leq e^4 - e^3$ for $e \geq 2$, and Snyder \cite{S} 
classified the finite groups with $e=2$ or $e=3$. These results can be
used in order to sharpen the first inequality in 
Corollary~\ref{TGinequalities}.

 We also note that 
 $|G| = T(G)b(G)$ holds if and only if all irreducible characters of $G$
 have the same degree, i.~e., if and only if $G$ is abelian. 
 
 Moreover, $T(G) = b(G)$ holds if and only if $G = 1$. Similarly, $T(G) = 
 b(G)+1$ holds if and only if $|G| = 2$, and $T(G) = b(G) + 2$ holds
 if and only if $G$ is isomorphic to $A_3$ or $S_3$.
 Stronger inequalities follow from \cite[Proposition~4.1]{partI}.
\end{rem}

Next we specialize Proposition~\ref{TGClifford} to the case of a central
subgroup.

\begin{prop}\label{TGcentral}
 Let $N$ be a central subgroup of a finite group $G$. Then we have
 \begin{eqnarray*}
  T(G) - T(G/N) & \geq & (|N| - 1) \cdot |G/N| / b(G) \\
                & \geq & (|N| - 1) \cdot |G/N|^{1/2}.
 \end{eqnarray*}
 Moreover, $T(G) - T(G/N) = (|N| - 1) \cdot |G/N| / b(G)$ holds if and only 
 if $\chi(1) = b(G)$ for every $\chi \in \Irr(G) \setminus \Irr(G/N)$;
 here we identify characters of $G/N$ with their inflations to $G$.
\end{prop}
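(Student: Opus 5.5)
We specialize Proposition~\ref{TGClifford} to a central subgroup $N$. Since $N$ is central, every $\phi \in \Irr(N)$ is linear and $G$-invariant, so $G_\phi = G$ and $\phi(1) = 1$. Proposition~\ref{TGClifford} then gives
\[
   T(G) = \sum_{\phi \in \Irr(N)} T(G|\phi)
        = \sum_{\phi \in \Irr(N)} T(G/N, \alpha_\phi).
\]
The summand for $\phi = 1_N$ is $T(G/N)$, because $\Irr(G|1_N)$ is $\Irr(G/N)$ inflated. Hence
\[
   T(G) - T(G/N) = \sum_{1_N \neq \phi \in \Irr(N)} T(G|\phi).
\]

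Next we bound each summand with $\phi \neq 1_N$. For such $\phi$, Clifford theory (Frobenius reciprocity together with $\chi_N = \chi(1)\phi$ for $\chi \in \Irr(G|\phi)$, since $N$ is central) gives
\[
   \sum_{\chi \in \Irr(G|\phi)} \chi(1)^2 = [(\phi^G)(1)] = |G/N|.
\]
Equivalently, this is the projective-character identity of Remark~\ref{TGinequality}(i). Since $\chi(1) \leq b(G)$ for every $\chi$,
\[
   |G/N| = \sum_{\chi \in \Irr(G|\phi)} \chi(1)^2 \leq b(G) \cdot T(G|\phi),
\]
so $T(G|\phi) \geq |G/N|/b(G)$. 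Equality holds exactly when every $\chi \in \Irr(G|\phi)$ has degree $b(G)$. There are $|N| - 1$ nontrivial $\phi$, and summing gives the first inequality. Moreover, $\Irr(G) \setminus \Irr(G/N)$ is the disjoint union of the sets $\Irr(G|\phi)$ with $\phi \neq 1_N$. So equality in the sum holds if and only if each termwise inequality is an equality, that is, if and only if $\chi(1) = b(G)$ for all $\chi \in \Irr(G) \setminus \Irr(G/N)$. This proves the ``moreover'' statement.

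For the second inequality it suffices to show $b(G) \leq |G/N|^{1/2}$. Take $\chi \in \Irr(G)$ with $\chi(1) = b(G)$, and let $\phi$ be the unique constituent of $\chi_N$. Then $\chi \in \Irr(G|\phi)$, and the identity above gives $\chi(1)^2 \leq |G/N|$. (This is the standard bound $\chi(1)^2 \leq [G:Z(G)]$, applied to $N \leq Z(G)$.) Hence $|G/N|/b(G) \geq |G/N|^{1/2}$.

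The only point needing care is the identity $\sum_{\chi \in \Irr(G|\phi)} \chi(1)^2 = |G/N|$. We will justify it from $\phi^G = \sum_\chi [\chi_N, \phi]\,\chi = \sum_\chi \chi(1)\chi$, whose degree is $[G:N]$. Everything else is bookkeeping.
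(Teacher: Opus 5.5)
Your proof is correct and follows essentially the same route as the paper: you apply Proposition~\ref{TGClifford} to write $T(G)-T(G/N)$ as a sum of $T(G|\phi)$ over the $|N|-1$ nontrivial $\phi\in\Irr(N)$, and you bound each term using $\sum_{\chi\in\Irr(G|\phi)}\chi(1)^2=|G/N|\leq b(G)\,T(G|\phi)$. You then get the second inequality from $b(G)^2\leq|G/N|$, as the paper does, and your equality analysis (termwise equality, with $\Irr(G)\setminus\Irr(G/N)$ the disjoint union of the $\Irr(G|\phi)$ for $\phi\neq 1_N$) spells out what the paper calls ``clear.''
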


\begin{proof}
 Proposition~\ref{TGClifford} implies that
 $$T(G) - T(G/N) = \sum_{1_N \neq \phi \in \Irr(N)} T(G|\phi),$$
 and Remark~\ref{TGinequality} shows that, for $\phi \in \Irr(N)$, we
 have
 $$T(G|\phi) \cdot b(G) = \sum_{\chi \in \Irr(G|\phi)} \chi(1) \cdot b(G) \geq 
 \sum_{\chi \in \Irr(G|\phi)} \chi(1)^2 = |G/N|.$$
This proves the first inequality. The second inequality follows since 
$b(G)$ is a divisor of $|G/N|$ with $b(G)^2 \leq |G/N|$. The 
statement about equality is clear.
\end{proof}

\begin{rem}
If the normal subgroup $N$ is \emph{not} central in $G$
then the bounds from Remark~\ref{TGcentral} do not hold in general,
but we have the weaker bound
$T(G) - T(G/N) \geq \sqrt{|N|-1} \sqrt{|G/N|}$,
which follows from
\[
   T(G) - T(G/N) = \sum_\chi \chi(1)
                 = \sqrt{(\sum_\chi \chi(1))^2}
                 \geq \sqrt{\sum_\chi \chi(1)^2}
                 = \sqrt{|G| - |G/N|},
\]
where the summations run over the set
$\{ \chi \in \Irr(G) : N \not \subseteq \ker(\chi) \}$.
\end{rem}

\begin{rem}\label{TGpractice}
Again, assume that $N$ is central in $G$.

(i) In cases where $b(G)$ is not known precisely one may work with the 
following slightly weaker inequality:
$$T(G) - T(G/N) \geq (|N| - 1) \cdot c(|G/N|);$$
here $c(n)$ denotes the smallest positive divisor $d$ of a positive
integer $n$ with $d^2 \geq n$. 

(ii) Since $|G/N| \leq T(G/N)^2$ by Corollary~\ref{TGinequalities} we
also obtain $T(G) \geq |N| \cdot |G/N|^{1/2}$, in particular
$$|G| \leq T(G)^2/|Z(G)|.$$
\end{rem}

%

Now we look at another special case of Proposition~\ref{TGClifford}.

\begin{rem}\label{TGextend}
In the situation of Proposition~\ref{TGClifford}, suppose that $G/N$ is
abelian and that each $\phi \in \Irr(N)$ extends to its stabilizer 
$G_\phi$. Then $|G/N|$ divides $T(G)$.

In fact, we have
$$T(G) = \sum_{\phi \in \Irr(N)} T(G_\phi/N) \cdot \phi(1)
       = \sum_{\phi \in \Irr(N)} |G_\phi/N| \cdot \phi(1)
       = \sum_{\phi \in R} |G/N| \cdot \phi(1)$$
where $R$ is a set of representatives for the $G$-orbits on ${\rm Irr}(N)$. 
The claim follows.
\end{rem}

Note that the extendibility condition in Remark~\ref{TGextend} is satisfied
whenever $G/N$ is cyclic.
We obtain the following consequence.

\begin{cor}\label{TGexpo}
 For any finite group $G$, the exponent of $G/G'$ divides $T(G)$.
\end{cor}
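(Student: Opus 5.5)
We apply Remark~\ref{TGextend} with $N := G'$. Then $G/N = G/G'$ is abelian, but an irreducible character $\phi$ of $G'$ need not extend to its stabilizer $G_\phi$. So we cannot simply conclude that $|G/G'|$ divides $T(G)$; indeed that divisibility is false in general. The weaker statement about the exponent suggests a reduction to cyclic quotients, where extendibility is automatic (as noted just before the corollary).

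Let $e$ be the exponent of $G/G'$. Since $G/G'$ is a finite abelian group of exponent $e$, it has a cyclic quotient of order exactly $e$: decompose $G/G'$ as a direct product of cyclic groups and project onto a cyclic factor of maximal order. Hence there is a normal subgroup $N$ with $G' \leq N \trianglelefteq G$ and $G/N$ cyclic of order $e$.

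Now apply Remark~\ref{TGextend} to this $N$. The group $G/N$ is abelian. For every $\phi \in \Irr(N)$, the quotient $G_\phi/N$ is a subgroup of the cyclic group $G/N$, hence cyclic. A $G_\phi$-invariant irreducible character of a normal subgroup with cyclic quotient extends to $G_\phi$ (Isaacs, Corollary~(11.22)). Equivalently, the Schur multiplier of a cyclic group is trivial, so $\alpha_\phi = 1$ in Proposition~\ref{TGClifford}. Thus the hypotheses of Remark~\ref{TGextend} hold, and $|G/N| = e$ divides $T(G)$.

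There is no serious obstacle. The only points needing care are the choice of a cyclic quotient of order exactly the exponent, and the extendibility over cyclic stabilizer quotients; both are standard.
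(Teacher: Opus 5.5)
Your proposal is correct and is essentially the paper's proof: choose $N \trianglelefteq G$ with $G/N$ cyclic of order the exponent of $G/G'$, observe that extendibility to stabilizers is automatic because each $G_\phi/N$ is cyclic, and apply Remark~\ref{TGextend}. Your extra remarks (that $|G/G'|$ itself need not divide $T(G)$, and how to get a cyclic quotient of order exactly the exponent) are accurate and just spell out details the paper leaves implicit.
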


\begin{proof}
 Let $n$ be the exponent of $G/G'$. Then $G$ contains a normal subgroup $N$
 such that $G/N$ is cyclic of order $n$. Thus the result follows from
 Remark~\ref{TGextend}.
\end{proof}

Under certain hypotheses, the divisibility assertion above generalizes.

%
 
\begin{rem}\label{TGpcube} 
In the situation of Proposition~\ref{TGClifford}, suppose that $G/N$ is 
abelian of order $p^3$ where $p$ is a prime. We claim that $p^2$ divides $T(G)$.

Indeed, we have again that
$$T(G) = \sum_{\phi \in \Irr(N)} T(G_\phi/N,\alpha_\phi) \cdot \phi(1).$$
If $[G:G_\phi] \geq p^2$ then $\phi$ has $[G:G_\phi]$ conjugates under $G$, 
and each of these contributes the same value to $T(G)$. 

If $[G:G_\phi] = p$ then $\phi$ has $p$ conjugates under $G$, and each
of these contributes the value $T(G_\phi/N,\alpha_\phi) \cdot \phi(1)$ to
$T(G)$. If $\phi$ 
extends to $G_\phi$ then we have $T(G_\phi/N,\alpha_\phi) = T(G_\phi/N) = 
|G_\phi/N| = p^2$.
If $\phi$ does not extend to $G_\phi$ then $\Irr(G_\phi/N,\alpha_\phi)$ 
consists of a single character of degree $p$.

Now suppose that $G_\phi = G$. If $\phi$ extends to $G$ then we 
obtain $T(G_\phi/N,\alpha_\phi) = T(G/N) = |G/N| = p^3$. 
If $\phi$ does not extend 
to $G$ then $\Irr(G/N,\alpha_\phi)$ consists of $p$ characters of degree
$p$, so that $T(G_\phi/N, \alpha_\phi) = T(G/N,\alpha_\phi) = p \cdot p = p^2$.

The claim follows.
\end{rem}

 We also have the following consequence of Remark~\ref{TGextend}:
 
\begin{rem}
 Let $N$ be a normal subgroup of a finite group $G$ such that $G/N$ is
 abelian and $\gcd(|N|,[G:N]) = 1$. Then $[G:N]$ divides $T(G)$. 
\end{rem}

Next we apply Proposition~\ref{TGcentral} to groups of prime power order.

\begin{lem}\label{pgrouplemma}
 Let $p$ be a prime, and let $G$ be a finite group of order $p^n =
 p^{2m+e}$ where $m$ and $n$ are integers, $n > 0$, and $e \in \{0,1\}$.
 Moreover, let $N$ be a minimal normal subgroup of $G$. Then
 $$T(G) \geq T(G/N) + (p-1) \cdot p^m.$$
 If equality holds then $\chi(1) = p^{m+e-1}$ for every $\chi \in 
 \Irr(G) \setminus \Irr(G/N)$; here we identify characters of $G/N$
 with their inflations to $G$.
\end{lem}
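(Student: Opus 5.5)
A minimal normal subgroup $N$ of the $p$-group $G$ is central of order $p$, because it meets $Z(G)$ nontrivially. So I will apply Proposition~\ref{TGcentral} with $|N| = p$ and $|G/N| = p^{n-1}$. This gives
$$T(G) - T(G/N) \geq (p-1)\cdot p^{n-1}/b(G),$$
with equality if and only if $\chi(1) = b(G)$ for every $\chi \in \Irr(G)\setminus\Irr(G/N)$.

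The remaining work is to bound $p^{n-1}/b(G)$ from below by $p^m$. Since $b(G)$ is a power of $p$, say $b(G) = p^a$, I need $a \leq n-1-m = m+e-1$. For this I use the refinement noted in the proof of Proposition~\ref{TGcentral}: $b(G)^2 \leq |G/Z(G)| \leq |G/N| = p^{n-1}$. (More directly, any $\chi \in \Irr(G)$ satisfies $\chi(1)^2 \leq [G:Z(G)]$, and for the non-inflated characters one gets $\chi(1)^2 \leq |G/N|$ because $\sum_{\chi\in\Irr(G|\phi)}\chi(1)^2 = |G/N|$. For inflated characters, $\chi(1)^2 \leq |G/N|$ holds trivially.) Hence $2a \leq n-1 = 2m+e-1$, and since $a$ is an integer, $a \leq m + \lfloor (e-1)/2\rfloor$. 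For $e=1$ this gives $a\leq m = m+e-1$. For $e=0$ it gives $2a \leq 2m-1$, so $a \leq m-1 = m+e-1$. In both cases $p^{n-1}/b(G) \geq p^{n-1-(m+e-1)} = p^{m}$, which proves the inequality.

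For the equality statement, suppose $T(G) = T(G/N) + (p-1)p^m$. The intermediate bound from Proposition~\ref{TGcentral} cannot be larger than $(p-1)p^m$, so both inequalities are equalities. The first equality forces $\chi(1) = b(G)$ for all $\chi \in \Irr(G)\setminus\Irr(G/N)$. The second forces $b(G) = p^{m+e-1}$. Together these give the claim.

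The only delicate point is the parity bookkeeping, namely that the bound $b(G)^2\le p^{n-1}$, together with integrality of the exponent, yields exactly $p^{m+e-1}$ in both parities. The rest is a direct specialization of Proposition~\ref{TGcentral}.
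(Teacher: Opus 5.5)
Your proposal is correct and follows the paper's proof: the paper also derives the lemma directly from Proposition~\ref{TGcentral}, using that $N \subseteq Z(G)$ and that $b(G) \leq p^{m+e-1}$, which is exactly the bound you obtain from $b(G)^2 \leq |G/N| = p^{2m+e-1}$ by a parity argument. Your equality argument, which forces both inequalities to be equalities, supplies details the paper leaves implicit.
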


\begin{proof}
This is an easy consequence of Proposition~\ref{TGcentral} since $N
\subseteq Z(G)$ and $b(G) \leq p^{m+e-1}$ holds.
%
\end{proof}

Next we prove a lower bound for the number $T(G)$, in the case where the 
order of $G$ is a prime power. 

\begin{prop}\label{HefMacH}
 Let $G$ be a finite group of order $p^n = p^{2m+e}$ where 
 $p,n,m,e$ are as above,
 except that $n = 0$ is now allowed. Then 
 $$T(G) \geq p^{m+1} + p^{m+e} - p.$$
 If equality holds and $n \geq 3$ then $\Irr(G)$ consists of $p^2$
 linear characters, $p^2-1$ characters of degree $p^i$, for $i=1,
 \ldots, m-1$, and $p^e-1$ characters of degree $p^m$. In particular,
 we have $k(G) = p^e + (p^2-1) \cdot m$.
\end{prop}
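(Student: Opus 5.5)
Induction on $n$, using Lemma~\ref{pgrouplemma} along a chief series. Write $f(n) = p^{m+1} + p^{m+e} - p$ for $n = 2m+e$.

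For the base cases: if $n=0$ then $m=e=0$, so $f(0) = p + 1 - p = 1 = T(1)$. If $n=1$ then $m=0$, $e=1$, and $f(1) = 2p - p = p = T(G)$, since $G$ is cyclic of order $p$. If $n=2$ then $m=1$, $e=0$, and $f(2) = p^2 + p - p = p^2 = T(G)$, since $G$ is abelian.

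For the induction step with $n \geq 1$, choose a minimal normal subgroup $N$; then $|N| = p$ and $N \subseteq Z(G)$. Lemma~\ref{pgrouplemma} gives $T(G) \geq T(G/N) + (p-1)p^m$. By induction $T(G/N) \geq f(n-1)$, so it suffices to check
\[
f(n-1) + (p-1)p^m = f(n).
\]
For $e=1$ we have $n-1 = 2m$, so $f(n-1) = p^{m+1} + p^m - p$, and adding $(p-1)p^m$ gives $2p^{m+1} - p = f(n)$. For $e=0$ we have $n-1 = 2(m-1)+1$, so $f(n-1) = 2p^m - p$, and adding $(p-1)p^m$ gives $p^{m+1} + p^m - p = f(n)$. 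Both identities hold, and the inequality follows.

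For the equality case, let $n \geq 3$ and $T(G) = f(n)$. Then both inequalities in the chain must be equalities, so $T(G/N) = f(n-1)$ and, by Lemma~\ref{pgrouplemma}, every $\chi \in \Irr(G) \setminus \Irr(G/N)$ has degree $p^{m+e-1}$. Counting via $\sum \chi(1)^2 = |G| - |G/N| = (p-1)p^{n-1}$, there are exactly
\[
\frac{(p-1)p^{n-1}}{p^{2(m+e-1)}} = (p-1)p^{1-e}
\]
such characters. Concretely, for $e=1$ this gives $p-1$ new characters of degree $p^m$, and for $e=0$ it gives $p^2 - p$ new characters of degree $p^{m-1}$.

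The degree distribution then follows by induction on the equality structure. The main subtlety is that the equality statement only covers $n \geq 3$, so the induction needs base descriptions at $n=2$ and $n=3$.

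At $n=2$ the group $G/N$ is abelian, so $\Irr(G/N)$ consists of $p^2$ linear characters. For $n=3$ (so $m=1$, $e=1$), this is consistent with the claim: $p^2$ linear characters and $p-1$ characters of degree $p$.

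For $n-1 \geq 3$, the induction hypothesis gives the structure of $\Irr(G/N)$, and we add the new characters.
\begin{itemize}
\item If $n-1 = 2m$ (even), $G/N$ has $p^2$ linear characters, $p^2-1$ characters of each degree $p^i$ for $1 \leq i \leq m-1$, and $p^0 - 1 = 0$ characters of degree $p^m$. Adding $p-1$ characters of degree $p^m$ matches the claim with $e=1$.
\item If $n-1 = 2(m-1)+1$ (odd), $G/N$ has $p^2$ linear characters, $p^2-1$ characters of each degree $p^i$ for $1 \leq i \leq m-2$, and $p-1$ characters of degree $p^{m-1}$. Adding $p^2-p$ characters of degree $p^{m-1}$ brings the total to $p^2-1$, and there are $p^0-1 = 0$ of degree $p^m$. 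This matches the claim with $e=0$.
\end{itemize}

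The case $n=4$ also works from the $n=2$ base. There $G/N$ has order $p^3$ but already lies in the $n-1 = 3$ inductive case, and the $n=3$ case itself starts from $n-1=2$, where the structure is $p^2$ linear characters. This is consistent with the formula at $m=1$ and $e=0$: $p^2$ linear characters and no others.

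Finally, $k(G) = p^2 + (p^2-1)(m-1) + p^e - 1 = p^e + (p^2-1)m$.
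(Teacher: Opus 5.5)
Your proof is correct and is essentially the paper's argument: induction on $n$ via a central minimal normal subgroup $N$ and Lemma~\ref{pgrouplemma}, with equality forcing $T(G/N)$ to be minimal and every character outside $\Irr(G/N)$ to have degree $p^{m+e-1}$, after which the degree distribution follows by induction. The differences are only cosmetic: you count the $(p-1)p^{1-e}$ new characters via $\sum \chi(1)^2 = |G|-|G/N|$ instead of via $T(G)-T(G/N)$, and you spell out the $n=3$ step from the abelian quotient of order $p^2$, which the paper's ``by induction'' covers only implicitly.
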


\begin{proof}
 If $n=0$ then $m = e = 0$, $|G| = 1$ and $T(G) = 1 = p+1-p$. Suppose
 now that $n \geq 1$, and choose a minimal normal subgroup $N$ of $G$.
 Then $|G/N| = p^{n-1} = p^{2(m+e-1)+(1-e)}$. Arguing by induction,
 Lemma~\ref{pgrouplemma} above shows:
 \begin{eqnarray*}
  T(G) &\geq &T(G/N) + (p-1) \cdot p^m 
  \geq p^{m+e} + p^m - p + p^{m+1} - p^m \cr
  &= &p^{m+1} + p^{m+e} - p.
 \end{eqnarray*}
 Now suppose that $n \geq 3$ and $T(G) = p^{m+1} + p^{m+e} - p$. 
 Then we must have
 $T(G) = T(G/N) + (p-1) \cdot p^m$. Thus Lemma~\ref{pgrouplemma} implies that 
 $\Irr(G) \setminus \Irr(G/N)$ consists of $(p-1) \cdot p^{1-e}$ characters
 of degree $p^{m+e-1}$. Moreover, we also have $T(G/N) = p^{m+e} +
 p^m - p$. Thus, by induction, $\Irr(G/N)$ consists of $p^2$ linear
 characters, $p^2-1$ characters of degree $p^i$, for $i = 1, \ldots,
 m+e-2$, and $p^{1-e}-1$ characters of degree $p^{m+e-1}$. Thus the 
 total number of characters of degree $p^{m+e-1}$ in $\Irr(G)$ is
 $p^{2-e}-1$. This means that, in case $e=1$, $\Irr(G)$ contains 
 $p-1$ characters of degree $p^m$, and in case $e=0$, $\Irr(G)$
 contains $p^2-1$ characters of degree $p^{m-1}$ (and no characters
 of degree $p^m$). Hence the character degrees are as claimed.
 Furthermore, we conclude that 
 \[
   k(G) = |\Irr(G)| = p^2 + (m-1) \cdot (p^2-1) + p^e-1 = p^e + m \cdot (p^2-1).
 \]
\end{proof}

\begin{rem}\label{Poland}
 (i) The first part of Proposition~\ref{HefMacH} confirms a conjecture of 
 Heffernan and MacHale \cite{HM} (see the text after Proposition 8).
 
 (ii) Proposition~\ref{HefMacH} also shows that a finite group $G$ of order
 $p^n = p^{2m+e}$ as above with $T(G) = p^{m+1} + p^{m+e} - p$ has
 the minimal possible class number predicted by Hall's class number 
 formula \cite[Satz~V.15.2]{HuppertI}). Thus a result by Poland 
 \cite{Poland} shows
 that such a group is a $p$-group of maximal class. Moreover, its
 commutativity degree (see~\cite{Blackburn} for a definition) is 0 or 1.
 This implies that its order is at
 most $p^{p+2}$ for $p \leq 7$, and at most $p^{p+1}$ for $p>7$
 (cf. \cite{FAS}). 
 
 (iii) There are finite groups $G,H$ of the same prime power order such
 that $k(G) < k(H)$, but $T(G) > T(H)$. For example, this happens for 
 groups of order $2^6$ and $3^7$.
\end{rem}

\begin{example} 
 For a prime $p$ and a nonnegative integer $n = 2m+e$ as above
 we denote by $T(p^n)$ the minimum of the values $T(G)$ where $G$ runs 
 through the groups of order $p^n$. Thus $T(p^n) \geq p^{m+1} + p^{m+e} -p$
 by Proposition~\ref{HefMacH}. Computations in \cite{HM} show that 
 $T(p^n) = p^{m+1} + p^{m+e}-p$ whenever $p=2$ and $n \leq 4$, 
 or $p=3$ and $n \leq 5$, or $p\geq 5$  and $n \leq 6$.
 On the other hand, computations with \GAP~\cite{GAP} show that 
 $T(p^n) = p^{m+1} + p^{m+e} - p + p \cdot (p-1)^2$ in the following cases:
 $$p^n \in \{2^5,2^6,2^7,3^6,3^7\},$$
 and that $T(2^8) = 56 > 48$. The computations in Section~6 will show
 that $T(2^9) = 80$ and $T(2^{10}) = 112$. 
\end{example}

Our next result can be viewed as an analog of P.~Hall's class number formula
for groups of prime power order (cf. \cite[Satz V.15.2]{HuppertI}).

\begin{prop}\label{TGHall}
 Let $G$ be a finite group of order $p^n = p^{2m+e}$ where
 $p,n,m,e$ are as above. Then there exists a nonnegative integer $t$
 such that 
 $$T(G) = p^{m+1} + p^{m+e} - p + t \cdot p \cdot (p-1)^2.$$
\end{prop}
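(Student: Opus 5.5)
The plan is to reduce the statement to a congruence and to use Proposition~\ref{HefMacH} for nonnegativity. Since Proposition~\ref{HefMacH} already gives $T(G) \geq p^{m+1} + p^{m+e} - p$, it suffices to prove
$$T(G) \equiv p^{m+1} + p^{m+e} - p \pmod{p(p-1)^2}.$$
The case $n = 0$ is trivial, so assume $n \geq 1$. Because $\gcd(p,(p-1)^2) = 1$, I will verify the congruence separately modulo $p$ and modulo $(p-1)^2$, and then combine the two by the Chinese remainder theorem. This mirrors the structure of Hall's class number argument.

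\textbf{Setup.} For $i \geq 0$ let $a_i$ be the number of irreducible characters of $G$ of degree $p^i$. Then
$$T(G) = \sum_i a_i p^i \quad\text{and}\quad p^n = \sum_i a_i p^{2i}.$$
Here $a_0 = |G/G'|$ is a positive power of $p$ (as $n \geq 1$), so $a_0 \equiv 0 \pmod p$.

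\textbf{Congruence modulo $p$.} Every term of $T(G)$ is divisible by $p$: the terms with $i \geq 1$ trivially, and the term $a_0$ since $a_0 \equiv 0 \pmod p$. The target $p^{m+1}+p^{m+e}-p$ is also divisible by $p$, because $m+e \geq 1$ when $n \geq 1$.

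\textbf{Key lemma: $(p-1) \mid a_i$ for $i \geq 1$.} For $p = 2$ there is nothing to prove, so let $p$ be odd. Let $C$ be the subgroup of order $p-1$ in $(\mathbb{Z}/p^n\mathbb{Z})^\times \cong \mathrm{Gal}(\mathbb{Q}(\zeta_{p^n})/\mathbb{Q})$. The group $C$ acts on $\Irr(G)$ via Galois conjugation, preserving degrees. It also acts on the conjugacy classes via $x \mapsto x^a$.

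Take $1 \neq \sigma_a \in C$ and suppose the class of $x \neq 1$ is fixed, i.e. $x^a = x^g$ for some $g$. Since $G$ is a $p$-group, we may replace $g$ by a suitable power and obtain $x^{a^{p^j}} = x$ for a $p$-power $p^j$. Hence $a^{p^j} \equiv 1$ modulo $o(x)$. Since $a$ has order prime to $p$ in $(\mathbb{Z}/p^n\mathbb{Z})^\times$, this forces $a \equiv 1$ modulo $o(x)$, a contradiction. So $\sigma_a$ fixes only the trivial class.

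By Brauer's permutation lemma, $\sigma_a$ then fixes only $1_G$. Therefore $C$ acts semiregularly on $\Irr(G) \setminus \{1_G\}$, and every orbit there has size $p-1$. This gives $(p-1) \mid a_i$ for $i \geq 1$, and also $(p-1) \mid a_0 - 1$, although the latter is not needed. I regard this lemma as the main obstacle, and the Brauer permutation lemma argument above is what I would try first.

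\textbf{Congruence modulo $(p-1)^2$.} Write $a_i = (p-1)b_i$ for $i \geq 1$. Since $p^i \equiv 1 \pmod{p-1}$, we get $a_i p^i \equiv (p-1)b_i$ and $a_i p^{2i} \equiv (p-1)b_i \pmod{(p-1)^2}$. Therefore
$$T(G) \equiv a_0 + (p-1)\sum_{i\geq1} b_i \equiv \sum_i a_i p^{2i} = p^n \pmod{(p-1)^2}.$$
On the other hand, $p^k \equiv 1 + k(p-1) \pmod{(p-1)^2}$ for all $k \geq 0$. Applying this to each term of the target gives
$$p^{m+1}+p^{m+e}-p \equiv 1 + (2m+e)(p-1) \equiv p^n \pmod{(p-1)^2}.$$

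\textbf{Conclusion.} The two congruences combine to $T(G) - (p^{m+1}+p^{m+e}-p) \equiv 0 \pmod{p(p-1)^2}$. Together with the inequality from Proposition~\ref{HefMacH}, this yields the required nonnegative integer $t$.
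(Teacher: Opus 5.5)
Your proof is correct and is essentially the paper's argument. Galois conjugation gives $(p-1)\mid a_i$ for $i\geq 1$, hence $T(G)\equiv |G|$ modulo $(p-1)^2$, and $p^k\equiv 1+k(p-1) \pmod{(p-1)^2}$ together with Proposition~\ref{HefMacH} finishes it. The differences are cosmetic: you handle the modulus $p$ separately via $p \mid |G:G'|$, whereas the paper notes directly that $p(p-1)^2$ divides $|G|-T(G)=\sum_{\chi\in\Irr(G)\setminus\Lin(G)}\chi(1)(\chi(1)-1)$; and you supply, via Brauer's permutation lemma, the semiregularity argument that the paper simply asserts.
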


\begin{proof}
 Algebraic conjugation shows that, for every integer $d>1$, the number of 
 irreducible characters of $G$ of degree $d$ is divisible by $p-1$.
 We conclude that 
 \[
  |G| - T(G) = \sum_{\chi \in \Irr(G)} \left(\chi(1)^2 - \chi(1)\right) \\
       = \sum_{\chi \in \Irr(G) \setminus \Lin(G)}
                  \chi(1) \cdot \left(\chi(1) - 1\right)
 \]
is divisible by $p \cdot (p-1)^2$.
Moreover, we have $p^k \equiv k \cdot (p-1) + 1 \pmod{(p-1)^2}$
for every nonnegative integer $k$. This implies:
\begin{eqnarray*}
 p^{m+1} + p^{m+e} - p & \equiv & (m+1) \cdot (p-1) + 1 + (m+e) \cdot (p-1) +1 - p \\
 & = & (2m+e) \cdot (p-1) + 1 \equiv p^n \pmod{(p-1)^2}.
\end{eqnarray*}
Thus we obtain
$$T(G) \equiv |G| \equiv p^{m+1} + p^{m+e} - p \pmod{p(p-1)^2},$$
and the result follows from Proposition~\ref{HefMacH} above.
\end{proof}

\begin{rem}
 (i) For a prime $p$ and a nonnegative integer $n$, let $k(p^n)$ denote 
 the minimum of all $k(H)$ where $H$ runs through the groups of order 
 $p^n$.
 
 Then there are examples of groups $G$ of order $p^n$ such that
 $k(G) = k(p^n)$ but $T(G) > T(p^n)$.
 For example, this happens for $p^n = 2^5$ and $p^n = 3^6$.
 
 Similarly, there are examples of groups $G$ of order $p^n$ such that
 $T(G) = T(p^n)$, but $k(G) > k(p^n)$. For example, this happens for 
 $p^n = 2^8$ and $p^n = 2^9$. 

 (ii) Let $G$ be a group of order $p^n > p^{p+2}$. Then 
 Proposition~\ref{TGHall} and Remark~\ref{Poland} imply that 
 $$T(G) \geq p^{m+1} + p^{m+e} + p^3 -2p^2.$$
\end{rem}

\begin{rem} 
 As in Proposition~\ref{TGHall}, we write
 $$T(G) = p^{m+1} + p^{m+e} - p + t \cdot p \cdot (p-1)^2.$$
 On the other hand, Hall's class number formula \cite[Satz~V.15.2]{HuppertI} 
 gives
 $$k(G) = p^e + (p^2-1) \cdot m + (p^2-1) \cdot (p-1) \cdot s$$
 where $s$ is a nonnegative integer sometimes called the abundance of 
 $G$ (cf. \cite{FAS}, for example). Inserting these two expressions 
 into the congruence
 \begin{eqnarray*}
  |G| - 2 T(G) + k(G) &= 
  &\sum_{\chi \in \Irr(G)} (\chi(1)^2 - 2 \chi(1) + 1) \cr
  &= &\sum_{\chi \in \Irr(G) \setminus \Lin(G)} (\chi(1) - 1)^2 \cr
  &\equiv &0 \pmod{(p-1)^3}
 \end{eqnarray*}
we obtain that $2s \equiv 2t \pmod{p-1}$.
\end{rem}

We end this chapter with a special result for the prime $2$.

\begin{lem}\label{TGtwo}
 Let $G$ be a finite group of order $2^n \geq 8$ where $n$ is an integer.
 If $G$ does not have maximal class then $T(G)$ is divisible by $4$.
\end{lem}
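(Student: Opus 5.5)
The plan is to combine the standard characterization of $2$-groups of maximal class with a congruence modulo $16$ coming from the degree equation $|G| = \sum_{\chi} \chi(1)^2$.

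\textbf{Step 1: the abelianization is large.} A $p$-group of order at least $p^2$ always satisfies $[G:G'] \geq p^2$. A group of order $p^n \geq p^3$ with $[G:G'] = p^2$ has maximal class (cf.\ \cite[Satz~III.14.23]{HuppertI}; for $p=2$ see also \cite[Satz~III.11.9]{HuppertI}). Since $G$ has order at least $8$ and is not of maximal class, we get $[G:G'] \geq 8$. Hence $|\Lin(G)| = [G:G']$ is divisible by $8$.

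\textbf{Step 2: split $T(G)$ by degree.} Every irreducible degree is a power of $2$. Let $a$ be the number of irreducible characters of degree $2$. Then
\[
T(G) = [G:G'] + 2a + \sum_{\chi(1) \geq 4} \chi(1),
\]
and the last sum is divisible by $4$. By Step~1 the first term is divisible by $8$. So it suffices to show that $a$ is even.

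\textbf{Step 3: $a$ is even.} Use the degree equation
\[
|G| = [G:G'] + 4a + \sum_{\chi(1)\geq 4} \chi(1)^2 .
\]
Each summand in the last sum is divisible by $16$. The numbers $|G| = 2^n$ and $[G:G']$ are both divisible by $8$. Hence $4a \equiv |G| - [G:G'] \equiv 0 \pmod 8$, so $a$ is even and $2a \equiv 0 \pmod 4$. Therefore $4 \mid T(G)$.

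The only nontrivial input is Step~1, the characterization of maximal class via $[G:G'] = p^2$, which is standard. Everything else is an elementary congruence.
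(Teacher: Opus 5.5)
Your proof is correct, and its first half is the same as the paper's: the paper also obtains $8 \mid [G:G']$ from \cite[Satz~III.11.9]{HuppertI}. After that the two routes diverge.

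The paper chooses $N \supseteq G'$ with $G/N$ abelian of order $8$ and applies Remark~\ref{TGpcube}. That remark runs through the $G$-orbits on $\Irr(N)$ and computes the contributions $T(G_\phi/N,\alpha_\phi)\phi(1)$ using projective characters of abelian groups of order $p^2$ and $p^3$. You instead compare $T(G)$ modulo $4$ with the degree equation modulo $16$ to force the number of degree-$2$ characters to be even.

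Your argument is entirely elementary. Replacing $4a \equiv 0 \pmod 8$ by $p^2 a \equiv 0 \pmod{p^3}$, it shows verbatim that $p^2 \mid T(G)$ for every $p$-group with $p^3 \mid [G:G']$. The paper's remark generalizes in a different direction. It applies to any finite group with a normal subgroup $N$ such that $G/N$ is abelian of order $p^3$. There the character degrees need not be powers of $p$, and your congruence is not available.

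One correction to Step~1. The general claim that a group of order $p^n \geq p^3$ with $[G:G'] = p^2$ has maximal class is false for odd $p$. For $p \geq 5$, the $2$-generator group of exponent $p$ and class $3$ has order $p^5$ and $[G:G'] = p^2$, but its class is $3 < 4$. You only need the case $p=2$ (Taussky's theorem, the reference the paper also uses), so your proof stands.
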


\begin{proof}
 If $G$ does not have maximal class then $8$ divides $[G:G']$
 (cf.~\cite[Satz~III.11.9]{HuppertI}, for example).
 Thus the result follows from Remark~\ref{TGpcube}.
\end{proof}
\vspace{1cm}


\section{Some classification results}\label{sect:classifications}

In this section we prove some results which classify certain groups $G$
in terms of the invariant $T(G)$. These results were obtained in connection
with the computation of a database of all finite groups $G$ with $T(G) \leq 
100$ (cf. Section 6).

\begin{lem}\label{TGprime}
 Let $G$ be a finite group and $p$ a prime. Then $T(G)$ is odd if and only
 if $|G|$ is odd. Moreover, we have $T(G) = p$ if and only if $|G| = p$.
\end{lem}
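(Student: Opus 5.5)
The plan is to handle the parity statement first and then derive the characterization of $T(G)=p$ from it together with Corollary~\ref{TGinequalities}.

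\emph{Parity.} Reduce modulo $2$. Since each $\chi(1)$ divides $|G|$, if $|G|$ is odd then every degree is odd. Moreover, $k(G)$ is odd in that case, because for $|G|$ odd no nontrivial irreducible character is real (Burnside), so the nontrivial characters pair off as $\chi,\overline{\chi}$. Hence $T(G)\equiv k(G)\equiv 1 \pmod 2$.

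If $|G|$ is even, use the Frobenius--Schur count: the number of real-valued irreducible characters equals the number of real conjugacy classes, and this is at least $2$ because an involution gives a real nontrivial class. The non-real characters pair off with $\overline\chi(1)=\chi(1)$, so they contribute an even amount to $T(G)$. Therefore
\[
T(G)\equiv \sum_{\chi\ \text{real}}\chi(1)\pmod 2.
\]
For real $\chi$ of odd degree, the linear character $\det\chi$ is real, so it has order dividing $2$.

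The cleaner route here is an involution-counting identity. Write $\nu(\chi)\in\{0,\pm1\}$ for the Frobenius--Schur indicator. Then $\sum_\chi \nu(\chi)\chi(1) = 1+\#\{\text{involutions}\}$, and since $\nu(\chi)\equiv 1 \pmod 2$ exactly for real $\chi$, we get
\[
T(G)\equiv \sum_\chi \nu(\chi)\chi(1) = 1+\#\{x : x^2=1,\ x\neq 1\}\pmod 2.
\]
The number of involutions in a group of even order is odd, since the non-involutions $x\neq x^{-1}$ pair off and $|G|$ is even. So $T(G)$ is even. This identity also re-proves the odd case, where there are no involutions and $T(G)$ is odd. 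This is the main step, and I would present the whole parity argument this way.

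\emph{The case $T(G)=p$.} If $|G|=p$ then $G$ is abelian with $p$ linear characters, so $T(G)=p$. Conversely, suppose $T(G)=p$. If $p=2$, parity forces $|G|$ even, and Remark~\ref{Berko} ($T(G)=b(G)+1$ iff $|G|=2$; note $b(G)=1$ as $T(G)=2$ gives at most two characters) gives $|G|=2$.

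For $p$ odd, $|G|$ is odd. By Corollary~\ref{TGexpo}, the exponent of $G/G'$ divides $p$, and it is not $1$ since groups of odd order are solvable (Feit--Thompson) unless $G=1$. So $G/G'$ is elementary abelian of order $p^a$, and $p^a$ linear characters give $p^a\le p$, hence $[G:G']=p$. Then $T(G)=p$ is already exhausted by the linear characters, so $k(G)=p$ and $G$ is abelian, giving $|G|=p$.

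To avoid Feit--Thompson, use Corollary~\ref{TGinequalities} instead: $|G|\le T(G)b(G)$, and all nonlinear degrees are at least $3$. A simpler finish is to note that $T(G)=p$ and Corollary~\ref{TGexpo} give that the exponent $d$ of $G/G'$ divides $p$. If $d=p$, we conclude as above. If $d=1$, then $G$ is perfect and the nonlinear degrees sum to $p-1$. I expect this perfect case to be the main obstacle: I would exclude it by citing solvability of odd-order groups, noting that $p-1$ is even while each nonlinear degree is odd and the nonlinear characters come in conjugate pairs.
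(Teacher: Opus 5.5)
Your proof is correct, but the parity part takes a much longer route than the paper. The paper reduces the degree equation modulo $2$: since $\chi(1)^2 \equiv \chi(1) \pmod 2$, one gets $|G| = \sum_{\chi} \chi(1)^2 \equiv \sum_\chi \chi(1) = T(G) \pmod 2$. This settles both directions at once, using nothing beyond $|G|=\sum_\chi \chi(1)^2$.

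Your Frobenius--Schur argument, $T(G) \equiv \sum_\chi \nu(\chi)\chi(1) = \#\{x : x^2=1\} \equiv |G| \pmod 2$, is valid. However, it needs the Frobenius--Schur involution count (and Burnside's theorem on real characters for your first version), and it gives nothing more here. Your preliminary remarks about real classes and $\det\chi$ are dead ends and should be cut.

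For $T(G)=p$, your argument is essentially the paper's. Both use Feit--Thompson together with Corollary~\ref{TGexpo}. You apply it to $G/G'$; the paper applies it to $G/N$ for a maximal normal subgroup $N$. Both then finish by noting that the characters of that quotient already account for all of $T(G)=p$, which forces $G$ to be abelian of order $p$ (equivalently $N=1$).

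Drop your final paragraph. It announces a way to avoid Feit--Thompson but then invokes the solvability of odd-order groups anyway. The parity remark there also gives no contradiction on its own: conjugate pairs of odd degrees summing to the even number $p-1$ are perfectly consistent with parity.
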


\begin{proof}
 The first assertion holds since 
 $$|G| = \sum_{\chi\in \Irr(G)} \chi(1)^2 
 \equiv \sum_{\chi \in \Irr(G)} \chi(1) = T(G) \pmod{2}.$$
 If $|G| = p$ then clearly $T(G) = p$ holds. Conversely, suppose that 
 $T(G) = p$. If $T(G) = 2$ then obviously $|G| = 2$. 
 Thus we may assume that $p$ is odd. Then, by the first assertion,  $|G|$ is
 odd. By the Feit-Thompson theorem, $G$ is solvable. Let $N$ be a maximal 
 normal subgroup of $G$. Since $G/N$ is cyclic, Corollary~\ref{TGexpo}
 implies that $[G:N] = p$, so that $T(G/N) = p = T(G)$. We conclude that
 $N=1$ and $|G| = p$. 
\end{proof}

Our next aim is a classification of the finite groups $G$ with $T(G) = p^2$
where $p$ is a prime. We start with the following useful elementary
observation from \cite{BM}.

\begin{lem}\label{BerMa}
 Let $H$ be a proper subgroup of a finite group $G$. Then $T(H) < T(G)$.
\end{lem}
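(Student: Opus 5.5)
The idea is to compare characters of $H$ and $G$ through induction, using Frobenius reciprocity. For each $\psi \in \Irr(H)$, the induced character $\psi^G$ is nonzero. So we choose some $\chi_\psi \in \Irr(G)$ with $[\chi_\psi, \psi^G] > 0$. Equivalently, $\psi$ is a constituent of $(\chi_\psi)_H$.

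The first step is to compare degrees $G$-character by $G$-character. Fix $\chi \in \Irr(G)$. Every $\psi$ with $\chi_\psi = \chi$ occurs in $\chi_H$, so
\[
   \sum_{\psi:\ \chi_\psi = \chi} \psi(1) \leq \sum_{\psi \in \Irr(H)} [\chi_H,\psi]\,\psi(1) = \chi(1).
\]
Summing over $\chi$ gives
\[
   T(H) = \sum_{\psi \in \Irr(H)} \psi(1) \leq \sum_{\chi \in \Irr(G)} \chi(1) = T(G),
\]
where we use that every $\psi$ is assigned to exactly one $\chi$.

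The remaining step, and the main point, is strictness. We look for one $\chi$ whose contribution is strictly smaller than $\chi(1)$, or which receives no $\psi$ at all. Take $\chi = 1_G$. Its restriction to $H$ is $1_H$, so the only $\psi$ that can be assigned to $1_G$ is $1_H$.

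Now consider the permutation character $(1_H)^G$, which has degree $[G:H] > 1$. Besides $1_G$ it has a constituent $\chi_0 \neq 1_G$, since $[(1_H)^G, 1_G] = 1$. We modify the assignment by choosing $\chi_{1_H} := \chi_0$ instead of $1_G$. This is allowed because $[\chi_0, (1_H)^G] > 0$.

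With this choice, $1_G$ receives no $\psi$ at all, so its contribution on the left is $0 < 1 = 1_G(1)$. The inequality above is therefore strict, which gives $T(H) < T(G)$.

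The only subtlety is exactly this: checking that the assignment can be chosen so that some $\chi$ is not fully used. The case $\chi = 1_G$ with the nontrivial constituent $\chi_0$ of $(1_H)^G$ handles it directly. This constituent exists precisely because $H$ is a proper subgroup.
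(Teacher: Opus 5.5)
Your proof is correct and complete. The assignment $\psi \mapsto \chi_\psi$ with $[(\chi_\psi)_H,\psi] \geq 1$ gives $T(H) \leq T(G)$ fiber by fiber. Choosing $\chi_{1_H}$ to be a nontrivial constituent of $(1_H)^G$ empties the fiber over $1_G$, because $(1_G)_H = 1_H$ cannot contain any $\psi \neq 1_H$. That constituent exists precisely because $[G:H] > 1$ while $[(1_H)^G, 1_G] = 1$.

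The paper gives no proof of this lemma; it only quotes it as an elementary observation from \cite{BM}. So there is no in-paper argument to compare with. Your Frobenius-reciprocity argument is a self-contained proof of the kind the paper calls elementary. It even gives the slightly sharper bound $T(H) \leq T(G) - 1$.
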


We can now prove the announced result.

\begin{prop}\label{TGprimesquare}
 Let $p$ be a prime, and let $G$ be a finite group with $T(G) = p^2$.
 Then one of the following holds:
\begin{enumerate}
 \item[(a)]
   $G$ is abelian of order $p^2$.
 \item[(b)]
   $G$ is a Frobenius group of order $pq^n$ where $q$ is a prime and $n$
   is a positive integer. Moreover, the Frobenius kernel of $G$ is a
   minimal normal subgroup of $G$ of order $q^n = p^2 - p + 1$.
\end{enumerate}
\end{prop}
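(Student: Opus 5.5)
We split according to the parity of $p$ and the solvability of $G$. By Lemma~\ref{TGprime}, $|G|$ and $T(G)=p^2$ have the same parity.

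\textbf{Case $p$ odd.} Here $|G|$ is odd, so $G$ is solvable by Feit--Thompson. If $G$ is abelian then $T(G)=|G|=p^2$, which is (a). So assume $G$ is nonabelian and set $n_0:=\exp(G/G')$. By Corollary~\ref{TGexpo}, $n_0$ divides $p^2$. Since $G$ is solvable, $G'<G$, so $n_0\in\{p,p^2\}$.

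First suppose $G/G'$ has exponent $p^2$. Then $G$ has a cyclic quotient of order $p^2$, and its total degree is $p^2$. Lemma~\ref{BerMa}, applied to quotients via Remark~\ref{TGinequality}(ii) (namely $T(G)-T(G/N)\geq T(N)-1>0$ for $N\neq 1$), forces $N=1$. So $G$ would be cyclic, a contradiction.

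Hence $G/G'$ is elementary abelian of order $p^k$. Then $T(G/G')=p^k\le T(G)=p^2$, and equality is excluded by the same argument, so $k=1$.

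Now pick a normal subgroup $N$ maximal among those with $G/N$ nonabelian. If no such $N$ gives a proper quotient, work directly. We want to reduce to the situation where $M:=G'$, or a chief factor below it, is a minimal normal $q$-subgroup with $G/M$ of order $p$. Applying Remark~\ref{TGinequality}(ii) to $N=G'$ gives
$$p^2=T(G)\ge T(G/G')+T(G')-1=p+T(G')-1,$$
so $T(G')\le p^2-p+1$.

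On the other hand, count directly. The $p$ linear characters contribute $p$. Every nonlinear $\chi$ has $\chi(1)$ dividing $|G|$, which is odd. Since $G'$ has index $p$ (cyclic quotient), each $\phi\in\Irr(G')$ either extends in $p$ ways or induces irreducibly. Thus
$$T(G)=p\cdot\#\{\text{$G$-invariant } \phi\}\cdot(\text{avg }\phi(1))+\sum_{\text{non-invariant orbits}}p\,\phi(1),$$
so $T(G)=p\sum_{\text{orbits}}\phi(1)$. Hence $\sum_{\text{orbits}}\phi(1)=p$, which contains the term $1$ from $\phi=1$. Combined with the inequality chain of Remark~\ref{TGinequality}(ii), which must be close to tight, we aim to show every nontrivial $\phi$ has trivial stabilizer. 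Then $G$ is Frobenius with kernel $G'$ by Remark~\ref{TGinequality}(iii), and $T(G')=p^2-p+1$.

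It remains to show $G'$ is a minimal normal $q$-subgroup. If $1<L<G'$ with $L\lhd G$, then $G/L$ is again Frobenius, and $T(G/L)<p^2$ with $T(G/L)\equiv 0 \pmod p$. This should contradict the orbit count, which forces $\sum_{\text{orbits}}\phi(1)=p$ with $G'$ having exactly $(T(G')-1)/p+1=p$ orbits' worth of degree. I expect to use that a Frobenius complement of order $p$ acts fixed-point-freely, so $T(G')-1\equiv 0\pmod p$. Showing that $G'$ is abelian and minimal normal, i.e.\ elementary abelian of order $q^n=p^2-p+1$, is the main obstacle. My first attempt would be: a nilpotent kernel whose degree sum over orbits equals $p-1$ with all orbits of size $p$ means $G'$ has exactly $p(p-1)+1$ characters' worth of degree. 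Comparing with $|G'|\ge T(G')$ (equality iff abelian), then using minimality via $T(G'/L)+T(L)-1\le T(G')$, should force $G'$ abelian and then irreducible.

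\textbf{Case $p=2$.} Here $T(G)=4$. The groups with $T(G)\le 4$ are checked directly. The linear characters contribute $[G:G']$, giving $G\in\{C_4,C_2^2,S_3\}$. Indeed $S_3$ has $T=1+1+2=4$ and is Frobenius of order $2\cdot 3$ with $3=4-2+1$. No others occur, since $b(G)\le 2$ forces $|G|\le T(G)b(G)\le 8$ by Corollary~\ref{TGinequalities}, and the groups of order at most $8$ can be inspected.
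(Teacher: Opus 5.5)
\textbf{There is a genuine gap in the odd case.} Your reductions there are correct, but they cover only the easy part: $|G/G'|=p$, $T(G')\le p^2-p+1$, and the orbit identity $\sum_{\text{orbits}}\phi(1)=p$.

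The argument breaks at ``we aim to show every nontrivial $\phi$ has trivial stabilizer''. Let $I$ be the set of $G$-invariant characters in $\Irr(G')$. Your identity gives exactly $T(G')=p^2-(p-1)\sum_{\phi\in I}\phi(1)$, so the slack in Remark~\ref{TGinequality}(ii) is $(p-1)\sum_{1_{G'}\neq\phi\in I}\phi(1)$. Nothing in the counting forces this to vanish, so ``the chain must be close to tight'' proves nothing. You also never show $p\nmid |G'|$, which a Frobenius conclusion with complement of order $p$ requires. Even granting that $G$ is Frobenius with kernel $G'$, the inequalities $|G'|\ge T(G')$ and $T(G'/L)+T(L)-1\le T(G')$ do not exclude a nonabelian nilpotent kernel. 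Proving that $G'$ is elementary abelian and minimal normal is the substance of the proposition, and your ``should force'' leaves it open.

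\textbf{What the paper supplies.} First, apply Lemma~\ref{BerMa} to a Sylow $p$-subgroup $P$ to get $|P/P'|\le T(P)<p^2$. Hence $|P|=p$ and $G'$ is a $p'$-group. Next, coprime action ($G'/G''=C_{G'/G''}(G)\times[G'/G'',G]$ together with $G'=[G',G]$) shows that $G/G''$ is Frobenius with kernel $G'/G''$. So $|G'/G''|\le p^2-p+1$, and $G'/G''$ is a chief factor, since two nontrivial fixed-point-free layers would have order $>p^2$.

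The case $G''>1$ must then be ruled out, and this needs real module-theoretic input. The bound $|G|<T(G)b(G)\le p^2(p^2-p)$ gives $|G''|\le (p-1)^2$. Take a chief factor $V=G''/M$.
\begin{itemize}
\item A non-stable $G'/G''$-constituent gives $|V|\ge 3^p$, which is too large.
\item A stable constituent extends by Dade--Isaacs, and then Glasby--Kov\'acs makes $V$ simple as a $G'/G''$-module.
\item After that, either a regular orbit of $G'/G''$ on $\Irr(V)$ yields $T(G)\ge pq^n>p^2$, or $G'/M$ is extraspecial of order $q^{2m+1}$. In the latter case $p<q^m$ contradicts $|G'/G''|<p^2$.
\end{itemize}
None of these ingredients, nor substitutes for them, appear in your proposal.

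Your $p=2$ case is fine, but it needs one line for $b(G)\le 2$: a degree $3$ would force $k(G)=2$ and $|G|=10$, which is impossible.
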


\begin{proof}
  We can assume that $G$ is nonabelian. For $p=2$, we have $T(G) = 4$. 
  Thus Corollary~\ref{TGinequalities} and Remark~\ref{Berko} imply that
  $|G| < T(G)b(G) \leq 4 \cdot 2 = 8$. Thus $G$ is isomorphic to $S_3$, and the proposition is proved in this case.
	
  In the following, we therefore assume that $p$ is odd. Then $G$ has odd order, by Lemma~\ref{TGprime}. Thus the Feit-Thompson theorem implies 
  that $G$ is solvable; in particular, we have $1 < G' < G$. By 
  Corollary~\ref{TGexpo}, $p$ is the only prime divisor	of $|G/G'|$. 
  Thus $G/G'$ is a $p$-group, and $|G/G'| = T(G/G') < T(G) = p^2$, 
  so that $|G/G'| = p$. Since $p^2 = T(G) \geq [G:G'] + b(G) = p + b(G)$,
  Corollary~\ref{TGinequalities} shows that $|G| < T(G)b(G) \leq 
  p^2(p^2-p) = p^4 - p^3$. This implies that $|G'| < p^3 - p^2$. 	
  Since $G$ is nonabelian and $|G/G'| = p$, $G$ cannot be a $p$-group. 
  Let $P$ be a Sylow $p$-subgroup of $G$. Since $P < G$, 
  Lemma~\ref{BerMa}
  implies that $p^2 = T(G) > T(P) \geq T(P/P') = |P/P'|$. We conclude
  that $|P/P'| = p$, so that $P' = 1$ and $|P| = p = |G/G'|$. Hence 
  $G'$ is a $p'$-group. 
	
  Standard results on coprime actions imply that 
  $$G'/G'' = C_{G'/G''}(G) \times [G'/G'',G/G'']$$
  where $[G'/G'',G/G''] = [G',G]/G''$. Then $G/[G',G]$ has a central subgroup
  with cyclic factor group, so that $G/[G',G]$ is abelian. But then we 
  conclude that	$G' \subseteq [G',G] \subseteq G'$, so that $G' = [G',G]$ 
  and therefore $C_{G'/G''}(G) = 1$. This means that $G/G'$ acts 
  fixed-point-freely on $G'/G''$. Hence $G/G''$ is a Frobenius group with 
  kernel $G'/G''$. Thus Remark~\ref{TGinequality}~(iii) shows that
  $p^2 = T(G) \geq T(G/G'') = p + |G'/G''| - 1$. This implies: $|G'/G''| \leq 
  p^2-p+1$.
	
  Since every nontrivial orbit of $G/G'$ on $G'/G''$ has length $p$, we also 
  have 	$p \; \big| \; |G'/G''| - 1$. This implies that $|G'/G''| \geq
  p+1$ and $|G''| \leq p^2 - 2p + 1 = (p-1)^2$.
	
  Assume that $G'/G''$ is not a chief factor of $G$. Then there exists a 
  normal subgroup $N$ of $G$ such that $G'' < N < G'$. Since $G/G'$ acts 
  nontrivially on $G'/N$ and on $N/G''$, we have $|G'/N| > p$ and 
  $|N/G''| > p$ which leads	to the contradiction $|G'/G''| > p^2$. 

  This means that $G'/G''$ is in fact a chief factor of $G$; in particular, 
  $G'/G''$ is elementary abelian of order $q^n$ where $q$ is a prime and $n$
  is a positive integer. 
  
  If $G'' = 1$ then $G$ is a Frobenius group of order $pq^n$ with 
  Frobenius kernel $G'$. Moreover, $G'$ is elementary abelian of order
  $q^n$ and a minimal normal subgroup of $G$. Since $p^2 = p + q^n - 1$
  the result follows in this case. 
	
  From now on, we assume that $G'' > 1$ and choose a chief factor $G''/M$ 
  of $G$. We can consider $V := G''/M$ as a simple $G/G''$-module. By
  Clifford theory, its restriction to $G'/G''$ is a semisimple 
  $G'/G''$-module. Let $U$ be a simple $G'/G''$-submodule of $V$. 
  
  If $U$ is not $G/G''$-stable then $V$ is the direct sum of $p$ simple 
  $G'/G''$-modules, all conjugate to $U$. But this leads to the 
  contradiction 
  $$(p-1)^2 \geq |G''| \geq	|G''/M| = |V| = |U|^p \geq 3^p.$$
	
  This shows that $U$ is $G/G''$-stable. By the Dade-Isaacs theorem
  (cf. \cite{Dade}), $U$ extends from the Hall $p'$-subgroup $G'/G''$ of $G/G''$ 
  to $G/G''$. By a result of Glasby and Kov\'acs in \cite{GK}, all 
  composition factors of the induced module $U^{G/G''}$ have the same
  dimension. Thus $V$ restricts to $U$, so that $V$ is simple as a 
  $G'/G''$-module. This means that $G''/M$ is also a chief factor of $G'$. 
  
  Obviously, the normal subgroup $C := C_G(V)$ of $G$ contains $G''$. 
  If $C = G''$ then $G'/G''$ acts faithfully and irreducibly on
  $V = G''/M$. Thus it also acts faithfully and irreducibly on $\Irr(G''/M)$. 
  Since $G'/G''$ is an abelian $q$-group it has a regular orbit on 
  $\Irr(G''/M)$ (cf.~\cite[Theorem~4.8]{MW}, for example).
  Let $\lambda$ be an element in this orbit, and let $T/G''$ be its 
  stabilizer in $G/G''$. Then $T/G'' \cap G'/G'' = 1$, i.e.\ $T \cap G' =
  G''$. Thus $T/G''$ is isomorphic to $TG'/G'$; in particular, $T/G''$ has
  order	$1$ or $p$.
	
  If $|T/G''| = 1$ then the $G/G''$-orbit of $\lambda$ contains $|G/G''| = 
  pq^n > p^2$ elements which is impossible. Thus we must have $|T/G''| =
  p$. Then there are $p$ distinct characters $\lambda_1,\ldots,
  \lambda_p$ of $T$ extending $\lambda$. By Clifford theory, $\lambda_1^G,
  \ldots,\lambda_p^G$ are distinct irreducible characters of $G$,
  so that $T(G) \geq p \lambda_1^G(1) = pq^n > p^2$, and we have again a 
  contradiction. 
	
  We conclude that $G'' < C$. Thus the normal subgroup structure of $G/G''$ 
  implies that $G' \subseteq C$. We conclude that $G''/M \subseteq Z(G'/M)$;
  in particular, $G'/M$ is nilpotent.
	
  If $G'/M$ is not a $q$-group then $G'/M$ is isomorphic to 
  $G'/G'' \times G''/M$; in particular, $G'/M$ is abelian. Since $M < G''$
  this is a contradiction.
	
  Thus $G'/M$ is in fact a $q$-group. Since $G''/M$ is a chief factor of 
  $G'/M$ we obtain that $|G''/M| = q$. Note that $G'/M$ is not abelian 
  since $M < G''$. Thus we have $Z(G'/M) < G'/M$. Since $G'/G''$ is a chief
  factor of $G$, this implies that $Z(G'/M) = G''/M = (G'/M)'$. Since 
  $G'/G''$ is elementary abelian we also have $\Phi(G'/M) \subseteq G''/M
  \subseteq \Phi(G'/M)$. This shows that $G'/M$ is an extraspecial $q$-group;
  in particular, $n$ is even. We set $m := n/2$. Since $p$ divides 
  $q^n - 1 = (q^m + 1)(q^m - 1)$ we see that $p$ divides $q^m+1$ or $q^m-1$. Since $p$ and $q$ are odd this implies $p < q^m$, and we obtain the
  contradiction $p^2 < q^{2m} = |G'/G''|$.
\end{proof}

\begin{example}
For $p=2,3,7$, there is a Frobenius group $G$ of order $6,21,301$, respectively,
with $T(G) = p^2$. For $p=5,11$, such a Frobenius group does not exist.
\end{example}


In the following we will derive a somewhat strange characterization of the 
groups $\Qd(p)$ where $p$ is a prime. Recall that $\Qd(p)$ denotes the 
semidirect product of $\SL(2,p)$ and an elementary abelian group of order
$p^2$, where the action is canonical. In \cite{HuppertII} the group $\Qd(p)$
is denoted by $\SA(2,p)$. Note that $\Qd(2)$ is isomorphic to the symmetric
group $S_4$.

We start with the result for $p=2$; this is easier than the case of odd 
primes and also a little different.

\begin{prop}\label{Qd2}
Let $G$ be a finite group with a Sylow $2$-subgroup $P$ of order $8$,
and suppose that $T(G) \leq 12$. Then $P$ is normal in $G$, or $G$ is 
isomorphic to $S_4$. (Note that $T(S_4) = 1+1+2+3+3 = 10$.)
\end{prop}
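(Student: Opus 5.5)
We argue by induction on $|G|$, using the Clifford-type estimates of Remark~\ref{TGinequality} and Lemma~\ref{BerMa}.

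\textbf{Bounding the order.} Since $|G|$ is even, $T(G)$ is even by Lemma~\ref{TGprime}. If $G$ is abelian then $P$ is normal, so assume $G$ is nonabelian. By Remark~\ref{Berko} we have $b(G)\leq T(G)-2$, and Corollary~\ref{TGinequalities} then gives
\[
|G|\leq T(G)\,b(G)\leq 12\cdot 10=120 .
\]
Hence $|G|=8k$ with $k$ odd and $k\leq 15$.

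\textbf{Reduction to solvable $G$.} A nonsolvable group of such an order with Sylow $2$-subgroup of order $8$ would have order $120$, so it would be $S_5$ or $\SL(2,5)$ (the only other candidate, $C_2\times A_5$, has Sylow $2$-subgroups of order $8$ but contains $A_5$ as a proper subgroup, so $T(C_2\times A_5)>T(A_5)=16$ by Lemma~\ref{BerMa}). Both $S_5$ and $\SL(2,5)$ have total degree far above $12$. So $G$ is solvable, and we choose a minimal normal subgroup $M$ of $G$, which is elementary abelian.

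\textbf{Case 1: $M$ is a $2$-group.} If $|M|=8$ then $M=P$ is normal and we are done.

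If $|M|=4$, then $G/C_G(M)$ embeds in $\GL(2,2)\cong S_3$. Minimality of $M$ forces an element of order $3$ to act nontrivially on $M$. Using $T(G)\leq 12$ together with Corollary~\ref{TGexpo} (the exponent of $G/G'$ divides $T(G)$) and Lemma~\ref{BerMa}, we would pin down $G/M$ and show that $G$ contains a subgroup $A_4$ extended by an involution acting as in $S_4$. The order bound then leaves only $G\cong S_4$, or groups with $P$ normal; the latter occur when $G/C_G(M)$ is a $2$-group, which contradicts minimality.

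If $|M|=2$, then $M$ is central and Proposition~\ref{TGcentral} gives
\[
T(G)\geq T(G/M)+|G/M|^{1/2} .
\]
We apply induction to $G/M$, whose Sylow $2$-subgroup has order $4$. Here an analogous small argument, or a direct check of the possibilities $G/M\cong A_4$ or $G/M$ having a normal Sylow $2$-subgroup, is needed. The case $G/M\cong A_4$ gives $G\cong\SL(2,3)$ or $C_2\times A_4$, and the total degrees of these groups (namely $12$ and $12$) must be checked directly. In both of these groups $P$ is normal, which is consistent with the claim.

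\textbf{Case 2: $M$ has odd order $q^a$.} Remark~\ref{TGinequality}(ii) gives
\[
T(G/M)\leq T(G)-(|M|-1)\leq 10 ,
\]
so induction applies to $G/M$.

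If $G/M\cong S_4$, then $T(G)\geq 10+2$, with equality only if $|M|=3$ and $G$ is a Frobenius group with kernel $M$ by Remark~\ref{TGinequality}(iii). This is impossible, since $S_4$ is not a Frobenius complement.

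Otherwise $PM\trianglelefteq G$, and the Frattini argument gives $G=M\,N_G(P)$. If $P$ centralizes $M$, then $P$ is characteristic in $PM=P\times M$, hence normal in $G$. So assume $P$ acts nontrivially on $M$. Then some nontrivial $\phi\in\Irr(M)$ has a nontrivial $P$-orbit, and we bound
\[
T(G)\geq T(G/M)+\sum_{1_M\neq\phi}|G_\phi/M|^{1/2}\,\phi(1)
\]
from below. Here $T(G/M)\geq T(P)\geq 8$ (using Lemma~\ref{BerMa} when $G/M$ properly contains $PM/M$), while the sum is at least $|M|-1\geq 2$ and is strictly larger because the stabilizers are nontrivial. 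We aim to push this above $12$.

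\textbf{Expected main obstacle.} The sharp inequality in Case~2 when $P$ is abelian is the hard step: there $T(P)=8$ and $|M|=3$ leave only a slim margin, which must be closed by a careful count of the characters lying over $\Irr(M)$, together with Corollary~\ref{TGexpo}, since $G/G'$ has exponent divisible by $4$ in many subcases.
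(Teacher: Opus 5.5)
Your proposal has genuine gaps. As written, it settles only the easy subcases: $|M|=8$, $G/M\cong S_4$ in Case~2, and $P$ centralizing $M$.

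\textbf{The subcase $|M|=2$.} You cannot ``apply induction to $G/M$'' here, because its Sylow $2$-subgroup has order $4$ and the statement does not cover that. The dichotomy you propose to check instead ($G/M\cong A_4$, or $G/M$ has a normal Sylow $2$-subgroup) is also false for the quotients that actually arise. For $|G|=24$, Proposition~\ref{TGcentral} only gives $T(G/M)\le 12-\sqrt{12}$. This admits $G/M\cong D_{12}$ and $G/M\cong C_3\rtimes C_4$: both have $T=8$ and three Sylow $2$-subgroups. These need a separate argument, for instance $b(G)\le 3$ together with the equality clause of Proposition~\ref{TGcentral}.

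\textbf{The subcase $|M|=4$.} This is only announced (``we would pin down $G/M$''). Nothing in it rules out $|G|=72$.

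\textbf{Case~2.} The argument stops at ``we aim to push this above $12$''. The estimate $T(G/M)\ge T(P)\ge 8$ you plan to use is wrong for $P\cong D_8$ or $Q_8$, where $T(P)=6$. So the case you single out as tight (abelian $P$, $|M|=3$) is not the critical one. For nonabelian $P$ and $|M|=3$, the bound $\sqrt{|G_\phi/M|}$ is too weak. You need the exact value $T(G_\phi\mid\phi)=4$ for the index-$2$ stabilizer $G_\phi=C_G(M)$ when $|G|=24$.

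\textbf{Missing idea: a sharper order bound.} This makes all of the above case work unnecessary.
\begin{itemize}
\item[(1)] For $|G|\ge 24$ one has $k(G)\ge 5$, so $b(G)\le T(G)-k(G)+1\le 8$ and $|G|<T(G)b(G)\le 96$.
\item[(2)] Hence $|G|\in\{24,40,56,72,88\}$, and $G=PQ$ for a single odd prime $q$.
\item[(3)] If $|Q|=q$, then $Q$ is normal in a counterexample. Then $C=C_G(Q)=C_P(Q)Q$ is a proper abelian normal subgroup, and $|G/C|$ is a power of $2$ dividing $q-1$. Remark~\ref{TGinequality} gives $12\ge |G/C|+|C|-1$, which fails for each $|G/C|\in\{2,4,8\}$.
\item[(4)] If $|Q|=9$, then $|G|=72$ and $k(G)\ge 6$, so $b(G)\le 7$. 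As a divisor of $72$, $b(G)\le 6$, contradicting $72=|G|<T(G)b(G)\le 72$.
\end{itemize}
This is the paper's proof. Your minimal-normal-subgroup induction might be repaired with enough case analysis. However, it would require an auxiliary classification for groups with Sylow $2$-subgroup of order $4$ and small total degree, which you have not supplied.
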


\begin{proof}
Let $G$ be a counterexample. Then we have $|G| \in \{24, 40,
56, 72, 88, \ldots\}$. This implies $k(G) \geq 5$, so that $b(G) \leq T(G)
- k(G) + 1 \leq 8$ and $|G| < T(G) b(G) \leq 96$. This shows that $|G| \leq
88$. Let $q$ be the unique odd prime divisor of $|G|$, and let $Q$ be a 
Sylow $q$-subgroup of $G$. Then $G = PQ$, and $|Q| \in \{3, 5, 7, 9, 11
\}$.

Suppose first that $|Q| = q$. Then $Q$ is normal in $G$
(cf.~\cite[Theorems~1.32 and 1.33]{IsaacsGT}, for 
example). Since $G$ is not nilpotent, $C := C_G(Q) = C_P(Q)Q$ is a 
proper subgroup of $G$ and abelian. Moreover, $G/C$ is isomorphic to a 
subgroup of $\mathrm{Aut}(Q)$; in particular, $|G/C|$ divides $q-1$. Now
Remark~\ref{TGinequality} implies that
$$12 \geq T(G) \geq T(G/C) + T(C) - 1 = |G/C| + |C| - 1.$$

If $|G/C| = 2$ then $|C| = 4q \geq 12$, and we have a contradiction.

If $|G/C| = 4$ then $q \geq 5$ and $|C| = 2q \geq 10$, and we 
have a contradiction again.

If $|G/C| = 8$ then $|C| = q \geq 17$ which is also impossible.

It remains to consider the case $|Q| = 9$. Then we have $|G|
= 72$ and $k(G) \geq 6$, so that $b(G) \leq T(G) - k(G) + 1 \leq 7$. Since
$b(G)$ divides $|G|$ we even obtain $b(G) \leq 6$. This gives the 
contradiction $|G| < T(G) b(G) \leq 72$.
\end{proof}

Now we turn to the case of odd primes. We start by collecting a number
of useful results.

\begin{lem}\label{Qd(p)lemma}
Let $S$ be a nonabelian finite simple group. Then the following holds:

\noindent (i) $|S| > 2 b(S)^2$;

\noindent (ii) $T(S) > 2b(S)$;

\noindent (iii) $b(S) \geq |A|$ for every abelian subgroup $A$ of $S$;

\noindent (iv) $|S| > |B|^2$ for every nilpotent subgroup $B$ of $S$.
\end{lem}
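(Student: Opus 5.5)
The plan is to derive the four statements from general facts about nonabelian simple groups. Where needed, we fall back on the Classification of the Finite Simple Groups, checking families with known character degrees and bounds.

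For (iii), take an abelian subgroup $A$ of $S$ and an irreducible character $\chi$ of $S$ of degree $b(S)$. We cannot simply restrict $\chi$ to $A$. Instead we use $[S:A] \geq b(S)$-type arguments in reverse. By Frobenius reciprocity, every $\chi \in \Irr(S)$ is a constituent of $\lambda^S$ for some $\lambda \in \Lin(A)$, so $\chi(1) \leq [S:A]$. That is the wrong direction, so the actual route is the CFSG. For alternating groups $A_n$, the largest abelian subgroups have order about $3^{n/3}$, whereas $b(A_n)$ grows like $\sqrt{n!}$ up to subexponential factors, and small $n$ are checked directly. For groups of Lie type in characteristic $r$, the maximal abelian subgroups are bounded by the maximal tori (of order roughly $q^{\mathrm{rank}}$) or by abelian unipotent subgroups (orders known from work of Barry/Gorenstein--Lyons--Solomon). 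The Steinberg character has degree $|S|_r$, which already exceeds these, except in small rank cases like $\PSL(2,q)$. There $b = q+1$ and abelian subgroups have order at most $q$ or $(q\pm1)/\gcd(2,q-1)$. Sporadic groups are checked against the Atlas. I expect (iii) to be the main obstacle: the unipotent abelian subgroups of order $q^{\lfloor n^2/4 \rfloor}$ in $\PSL(n,q)$ must be compared with the Steinberg degree $q^{n(n-1)/2}$, which works since $n(n-1)/2 \geq n^2/4$ for $n \geq 2$, but the other families need similar bookkeeping.

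For (i), recall that $b(S)^2 \leq |S|$, with equality impossible, and $b(S)$ divides $|S|$. So $|S| = b(S)(b(S)+e)$ with $e \geq 1$, by Remark~\ref{Berko}. By Berkovich, $e=1$ forces a Frobenius group, and by Snyder's classification $e = 2, 3$ never gives a simple group. Hence $e \geq 4$. It then remains to show $|S| > 2b(S)^2$, i.e.\ $e > b(S)$. We use the inequality of Hung--Lewis--Schaeffer Fry, $|S| \leq e^4 - e^3$, which is not enough on its own. So instead we note $|S| - b(S)^2 = \sum_{\chi(1) < b}\chi(1)^2 \geq$ the sum over the remaining characters. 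We argue that $S$ has at least two characters of degree $b(S)$ or many smaller ones, using that $b(S)$ is attained by a character that is not rational in general, or else an Atlas/Lie-type check. Failing a clean argument, we prove (i) by CFSG as well, using the same degree data as in (iii).

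Part (ii) follows from (i): by Corollary~\ref{TGinequalities}, $|S| \leq T(S)\,b(S)$, so $T(S) \geq |S|/b(S) > 2b(S)$.

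For (iv), let $B$ be nilpotent, $B = \prod_\ell B_\ell$. Each Sylow factor $B_\ell$ is contained in a Sylow $\ell$-subgroup of $S$. If $B$ is an $\ell$-group, $|B|^2 \leq |S|_\ell^2$, and it suffices that $|S|_\ell^2 < |S|$ for every prime $\ell$, a known CFSG consequence (no Sylow subgroup has index at most its order in a simple group; e.g.\ this is implicit in \cite[Proposition~4.2]{partI}-type results). For general nilpotent $B$, we pass to the centre: pick $z \in Z(B)$ of prime order, so $B \leq C_S(z)$. We then bound $|C_S(z)|$ against $|S|$ via the centralizer orders from the CFSG, using $|C_S(z)|^2 < |S|$ in most cases and treating the exceptions, like involution centralizers in $\PSL(2,q)$, directly.
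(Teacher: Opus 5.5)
Your derivation of (ii) from (i) via $|S|\le T(S)\,b(S)$ is exactly the paper's argument. Your outline for (iii) is a plausible though unfinished CFSG check; it overlooks mixed abelian subgroups $A=A_r\times A_{r'}$ with both factors nontrivial. Note that the paper proves none of (i), (iii), (iv) itself: it quotes \cite[Theorem~1.4]{HLSF}, \cite[Theorem~1.1]{HY} and \cite[Theorem~2.2]{Vd}.

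The genuine gaps are in (i) and (iv). For (i) you have no argument. The results of Berkovich and Snyder only exclude $e\le 3$, and $|S|\le e^4-e^3$ only yields $b(S)<e^2$, whereas you need $e>b(S)$. (Also, $|S|-b(S)^2$ is the sum of $\chi(1)^2$ over all irreducible characters except \emph{one} of degree $b(S)$, not over those of smaller degree.) Your fallback, ``CFSG with the same degree data as in (iii)'', does not close this. For groups of Lie type, (iii) only uses a \emph{lower} bound for $b(S)$, namely the Steinberg degree $|S|_r$. Statement (i), however, is the \emph{upper} bound $b(S)<\sqrt{|S|/2}$. That needs genuine upper bounds for the largest character degree of groups of Lie type; this is the hard direction, and it is what the paper imports from \cite[Theorem~1.4]{HLSF}. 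Since your (ii) rests on (i), it is unproved as well.

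For (iv), the case where $B$ is an $\ell$-group is fine once you know $|S|_\ell^2<|S|$, but your passage to arbitrary nilpotent $B$ fails. Multiplying the Sylow bounds gives only $|B|^2\le\prod_\ell|S|_\ell^2=|S|^2$. Your premise that $|C_S(z)|^2<|S|$ holds ``in most cases'' for $z$ of prime order is false; it fails generically. For a $3$-cycle $z\in A_8$ one has $|C_{A_8}(z)|=180$, and $180^2=32400>20160=|A_8|$. For a transvection $z$ in $S=\PSL(n,q)$ with $n\ge 4$ one has $|C_S(z)|=|S|(q-1)/\bigl((q^n-1)(q^{n-1}-1)\bigr)$, so again $|C_S(z)|^2>|S|$. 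Since $z$ has to be taken from $Z(B)$, the inclusion $B\le C_S(z)$ then gives no useful bound. You would have to bound nilpotent subgroups of the non-simple groups $C_S(z)$ themselves, which is the real content of Vdovin's theorem and is not supplied by your sketch.
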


\begin{proof}
(i) This is \cite[Theorem~1.4]{HLSF}.

(ii) Since $|S| < T(S) b(S)$ the assertion follows from (i).

(iii) This is \cite[Theorem~1.1]{HY}.

(iv) This is \cite[Theorem~2.2]{Vd}.
\end{proof}

Now we can prove an analog of Proposition~\ref{Qd2} for odd primes.

\begin{prop}\label{Qdp}
Let $p$ be an odd prime, and let $P$ be a Sylow $p$-subgroup
of a finite group $G$. Suppose that $|P| = p^3$ and $T(G) \leq p^3 + p^2$. 
Then either $P$ is normal in $G$, or $G$ is isomorphic to ${\Qd}(p)$. 
(It is easy to see that $T({\Qd}(p)) = p^2 + p + (p^2-1)p = p^3 + p^2$ 
for $p>2$.)
\end{prop}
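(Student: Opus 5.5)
We argue by contradiction: let $G$ be a counterexample of minimal order, so $P$ is not normal and $G\not\cong\Qd(p)$. The plan is to use Corollary~\ref{TGinequalities}, Lemma~\ref{BerMa} and Remark~\ref{TGinequality} to bound $|G|$. Then we pin down the structure through a minimal normal subgroup, treating the nonsolvable and solvable cases separately.

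\textbf{Step 1: crude bounds.} Since $P<G$, Lemma~\ref{BerMa} gives $T(P)<T(G)\le p^3+p^2$. From $|G|\le T(G)b(G)$ and $b(G)\le T(G)-k(G)+1$ we obtain a bound of the form $|G|<(p^3+p^2)^2$. As $p^3$ divides $|G|$, the $p'$-part satisfies $[G:P]<p(p+1)^2$.

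\textbf{Step 2: reduce to $O_p(G)=1$, or $O_p(G)$ of order $p^2$.} Let $N$ be a minimal normal subgroup of $G$.
\begin{itemize}
\item If $N$ is a $p'$-group, then $T(G/N)<T(G)$, and $G/N$ has Sylow subgroup $PN/N\cong P$. By minimality, either $PN\trianglelefteq G$ or $G/N\cong\Qd(p)$. Remark~\ref{TGinequality}(ii) gives $T(G)\ge T(G/N)+T(N)-1$, so the second case is impossible when $N>1$. In the first case, $G=N_G(P)\,PN$ by the Frattini argument, and we have to show that $P$ acts trivially on $N$. Otherwise the nonlinear characters coming from $N$ push $T(G)$ above $p^3+p^2$; this needs a Clifford-type count via Proposition~\ref{TGClifford}.
\item If $N$ is nonabelian, say $N=S^k$, then $p$ divides $|S|$ (else we are in the previous case). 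Lemma~\ref{Qd(p)lemma}(iv) gives $|S|>|P\cap S|^2$, and (i)--(iii) compared with Step 1 should rule this out. Concretely, $b(S)\ge|A|$ for an abelian subgroup $A$ of order at least $p^2$ or $p$; combined with $|S|<T(S)b(S)$ and the bound $[G:P]<p(p+1)^2$, this leaves only tiny $S$. Examples are $\PSL(2,p)$ with $k=1$, and those should be excluded directly because $T(\PSL(2,p))\approx p^2/2+\cdots$ together with the outer parts exceeds the bound, or the Sylow order does not fit.
\end{itemize}

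\textbf{Step 3: the case $N$ is a $p$-group.} Now $O_{p'}(G)=1$ and $G$ is $p$-solvable. Put $V=O_p(G)$. Since $P$ is not normal, $|V|\in\{p,p^2\}$ and $G/C_G(V)$ embeds in $\GL(V)$.
\begin{itemize}
\item If $|V|=p$, then $G/C_G(V)$ is cyclic of order dividing $p-1$. Then $C_G(V)$ contains $P$, and minimality applied to $C_G(V)$ should yield a contradiction.
\item Hence $|V|=p^2$, with $V$ elementary abelian and $C_G(V)=V$ by Hall--Higman (as $O_{p'}(G)=1$). So $G/V\le\GL(2,p)$, it has order divisible by $p$, and $P/V$ is not normal in it. The subgroups of $\GL(2,p)$ with non-normal Sylow $p$-subgroup contain $\SL(2,p)$.
\end{itemize}
Thus $\Qd(p)\le G\le V\rtimes\GL(2,p)$. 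If $G$ strictly contains $\Qd(p)$, then Lemma~\ref{BerMa} gives $T(G)>T(\Qd(p))=p^3+p^2$, a contradiction. Hence $G\cong\Qd(p)$.

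\textbf{Main obstacle.} I expect the main obstacle to be the nonabelian minimal normal subgroup case. The inequalities of Lemma~\ref{Qd(p)lemma} have to be combined carefully with $T(G)\le p^3+p^2$ and $|P|=p^3$, and a few small groups such as $\PSL(2,p)$ and $\PSL(2,p^2)$ may need explicit total-degree checks. The $p'$-group reduction in Step 2 also requires care, in showing that a nontrivial action of $P$ on $N$ increases $T$ too much.
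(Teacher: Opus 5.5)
Your Step~3 rests on a false claim. Having $O_{p'}(G)=1$ and all minimal normal subgroups $p$-groups does not make $G$ $p$-solvable; the target $\Qd(p)$ itself is not $p$-solvable for $p\ge5$. So Hall--Higman does not give $C_G(V)=V$.

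What you need is $E(G)=1$, i.e.\ $F^\ast(G)=O_p(G)$. Your case split on a single minimal normal subgroup never rules out components $S$ with $1\neq Z(S)$ a $p$-group. Concretely, take $G=3.A_6$ with $p=3$:
\begin{itemize}
\item its Sylow $3$-subgroup has order $27$ and is not normal, and $|G|=1080<36^2$, so it passes your Step~1 bound;
\item its only minimal normal subgroup is $V=Z(G)$ of order $3$, so you land in the case $|V|=p$ with $C_G(V)=G$;
\item there minimality gives nothing, and $G/V$ has Sylow $3$-subgroups of order $9$, so it lies outside the induction.
\end{itemize}
The paper handles exactly this situation. If $p\mid|Z(S)|$, then $p$ divides the Schur multiplier of $S/Z(S)$, so $p^3\mid|S|$, whence $G=S$ and $|Z(G)|=p$. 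Remark~\ref{TGpractice} then gives $|G/Z(G)|\le p^2(p+1)^2$, contradicting $|G/Z(G)|>2p^4$ from Lemma~\ref{Qd(p)lemma}(i),(iii). Only after all components are excluded is $C_G(O_p(G))\le O_p(G)$ available. That same fact also disposes of cyclic $O_p(G)$, since its automorphism group is abelian.

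Both halves of Step~2 are likewise announced rather than proved.
\begin{itemize}
\item In the $p'$-case, the claim that a nontrivial action of $P$ on $N$ pushes $T(G)$ above $p^3+p^2$ is the whole content. The paper first applies minimality to the proper overgroup $PN$ of $P$ to force $G=PN$. It kills nonabelian $N$ using $P$-invariant Sylow subgroups of $N$. For abelian $N$ it combines a regular orbit of $P/C_P(N)$ on $N$ ($p$ odd) with $T(G)\ge T(G/C)+T(C)-1$ for $C=C_G(N)$ and with $T(P)\ge 2p^2-p$.
\item In the nonabelian case, the crude bound from Step~1 does not reduce you to a short list of $S$. You never analyze the real configuration: an almost simple $G=SP$ with $|P\cap S|\le p^2$ and $P$ inducing outer automorphisms of order $p$. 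Nor do you treat products $S^k$ with $k>1$.
\end{itemize}
The paper works with $F^\ast(G)=O_p(G)\times S_1\times\cdots\times S_k$:
\begin{itemize}
\item it excludes $p^3\mid|S_i|$ by Lemma~\ref{Qd(p)lemma}(ii),(iv);
\item it deduces $2^k|P\cap F^\ast(G)|\le T(G)$ from (i),(iii), so that $P\not\le F^\ast(G)$;
\item it uses \cite{GGLN} to see that $P$ is nonabelian, hence $k\le1$;
\item it finishes with \cite[Corollary~4]{BBGT} and $|G|<p^4(p+1)^2$.
\end{itemize}

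Finally, $\Qd(p)\le G$ needs the preimage of $\SL(2,p)$ to split over $V$. This holds, e.g., because $-1\in\SL(2,p)$ acts fixed-point-freely on $V$, or by \cite[Lemma~IX.7.9]{HuppertII}. Granting that, your closing use of Lemma~\ref{BerMa} is a neat substitute for the paper's $O^{p'}(G)=G$ argument.
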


\begin{proof}
Let $G$ be a minimal counterexample, and let $N$ be a minimal normal 
subgroup of $G$. Then $C := C_G(N)$ is also normal in $G$.

If $H$ is a proper subgroup of $G$ containing $P$ then we have $T(H) < T(G) 
\leq p^3 + p^2$, by Lemma~\ref{BerMa}. Thus, by the minimality of $G$, 
$H \subseteq N_G(P)$ holds. In particular, 
$N_G(P)$ is the only maximal subgroup of $G$ containing $P$.

This argument shows that $G = O^{p'}(G)$; for otherwise we would have
$O^{p'}(G) \subseteq N_G(P)$. Thus $P$ would be the only Sylow $p$-subgroup 
of $O^{p'}(G)$ and would therefore be normal in $G$.

Suppose that $N$ is a $p'$-group. Then the Sylow $p$-subgroup $PN/N$ of 
$G/N$ has order $p^3$. Since $T(G/N) < T(G) \leq p^3 + p^2$, the minimality 
of $G$ implies that $PN/N$ is normal in $G/N$. Hence $PN$ is normal in $G$.

If $PN$ is a proper subgroup of $G$ then $P$ is normal in $PN$. Hence 
$P = O_p(PN)$ is normal in $G$, and we have a contradiction. This shows that
$PN = G$.

If $N$ is nonabelian then, for any prime $r$, $N$ has a $P$-invariant Sylow 
$r$-subgroup $R$. Since $PR$ is a proper subgroup of $G$, $P$ is normal in 
$PR$. This implies that $R \subseteq C_G(P)$. Since $r$ was arbitrary we 
even have $N \subseteq C_G(P)$ which is impossible.

Thus $N$ must be elementary abelian. Hence $C = C_P(N) \times N$ is a proper 
normal subgroup of $G$ and abelian. Since $p$ is odd, $P/C_P(N)$ has a 
regular orbit on $N$ (cf.~\cite[Theorem~8]{HuMa}, for example). This 
implies: $|N| \geq |P/C_P(N)| + 1$ and $T(C) = |C| \geq |P| + |C_P(N)|$. 
Moreover, Remark~\ref{TGinequality} yields:
$$T(G) \geq T(G/C) + T(C) - 1 \geq T(P/C_P(N)) + |P| + |C_P(N)| -1.$$
If $C_P(N) = 1$ then we obtain the contradiction 
$T(G) \geq T(P) + p^3 \geq 2p^2 - p + p^3 > p^3 + p^2$, 
and if $C_P(N) > 1$ then we reach the contradiction 
$$T(G) \geq |P/C_P(N)| + p^3 + |C_P(N)| - 1 = p^3 + p^2 + p - 1 
> p^3 + p^2.$$
This finishes the case where $N$ is a $p'$-group. From now on, we can 
therefore assume that $O_{p'}(G) = 1$. 
Next we consider $E(G)$, the layer of $G$, and 
$F^\ast(G)$, the generalized Fitting subgroup of $G$.

If $G$ has a component $S$ with $Z(S) > 1$ then $p$ divides $|Z(S)|$, and 
therefore $p$ divides $|\mathrm{H}^2(S/Z(S), \mathbb{C}^\times)|$. 
Hence $p^2$ divides $|S/Z(S)|$, so that
$p^3$ divides $|S|$. Thus $S$ contains $P$. Since $P$ is not normal in $S$, 
we must have $S = G$, and $N = Z(G)$ has order $p$. 
Remark~\ref{TGpractice} shows:
$$|G| \leq T(G)^2/|N| \leq p^3(p+1)^2,$$
so that $|G/N| \leq p^2(p+1)^2$. On the other hand, parts (i) and (iii) of 
Lemma~\ref{Qd(p)lemma} imply that $|G/N| > 2p^4$. This is a contradiction.

We have therefore proved that every component of $G$ is simple. Thus we can 
write $E(G) = S_1 \times \ldots \times S_k$ with nonabelian finite simple 
groups $S_1,\ldots,S_k$. Since $p$ divides $|S_i|$ for $i=1,\ldots,k$, we 
have $k \leq 3$. We also set $S_0 := F(G) = O_p(G)$. Then we have 
$|S_0| \leq p^2$ and $F^\ast(G) = S_0 \times S_1 \times \ldots \times S_k$. 

Suppose next that $p^3$ divides $|S_i|$ for some $i \in \{1,\ldots,k\}$. 
Since $S_i$ does not have a normal Sylow $p$-subgroup, this implies that 
$G = S_i$ is simple. 
But now part (iv) of Lemma~\ref{Qd(p)lemma} shows that $|G| > p^6$. 
On the other hand, part (ii) of Lemma~\ref{Qd(p)lemma} implies that 
$T(G) > 2b(G)$, so that $|G| < T(G) b(G) < T(G)^2/2 \leq p^4(p+1)^2/2$, 
and we have a contradiction. 

This shows that $p^3$ does not divide $|S_i|$, for $i = 0,\ldots,k$; 
in particular, the Sylow $p$-subgroups of $F^\ast(G)$ are abelian. 
Lemma~\ref{Qd(p)lemma} above implies:
\begin{eqnarray*}
 T(F^\ast(G)) b (F^\ast(G)) &\geq &|F^\ast(G)| 
           = |S_0| \cdot |S_1| \cdots |S_k| \cr
           &\geq &|S_0| \cdot 2b(S_1)^2 \cdots 2 b(S_k)^2 \cr
           &\geq &2^k |S_0| \cdot | P \cap S_1| \cdots |P \cap S_k| b(S_1 
           \times \ldots \times S_k) \cr
           &= &2^k |P \cap F^\ast(G)| b(F^\ast(G)).
\end{eqnarray*}
Thus $2^k|P \cap F^\ast(G)| \leq T(F^\ast(G)) \leq T(G) \leq p^3 + p^2$.

If $P \subseteq F^\ast(G)$ then we must have $k>0$ since $|S_0| \leq p^2$. 
But then the inequality above gives the contradiction $2p^3 \leq p^3 + p^2$.

We conclude that $P$ is not contained in $F^\ast(G)$; in particular, 
we have $| P \cap F^\ast(G)| \leq p^2$. Moreover, $P$ is nonabelian, by
\cite[Corollary~1.2]{GGLN}. 
Thus $P$ has $p$-rank $2$, and $Z(P)$ is the only minimal normal subgroup of 
$P$. Hence we must have $k \leq 2$. Since $p$ is odd, $P$
normalizes $S_i$, for $i = 0,1,\ldots,k$. Thus 
$P \cap S_1,\ldots,P \cap S_k$ are 
nontrivial normal subgroups of $P$. This shows that $k \leq 1$.

Suppose that $k=1$. Then, by the same argument, we have $O_p(G) = 1$. 
Thus $F^\ast(G) = S_1$ is simple, and $G$ is almost simple. 
Moreover, we have $G = F^\ast(G) P > F^\ast(G)$. 
Thus \cite[Corollary~4]{BBGT} implies that
either $p=3$ and $G$ is isomorphic to one of ${\PSL}_2(8).3$, 
${\PSU}_3(8).3$, ${\PSU}_3(8).3^2$, or $p=5$ and $G$ is isomorphic to 
${^2}B_2(32).5$. However, 
these cases are impossible since $|G| < T(G)^2 \leq p^4(p+1)^2$.

We are left with the case $k=0$, i.e.\ $E(G) = 1$ and $F^\ast(G) = O_p(G)$. 
Then $G$ is $p$-constrained. If $O_p(G)$ is cyclic then $G/O_p(G)$ is 
isomorphic to a subgroup of the abelian group $\mathrm{Aut}(O_p(G))$. 
Since $G/O_p(G)$ does not have a normal 
Sylow $p$-subgroup, this is a contradiction. 

Hence $O_p(G)$ must be elementary abelian of order $p^2$. 
Then $G/O_p(G)$ is isomorphic to a subgroup of ${\GL}_2(p)$. 
Since $G/O_p(G) = O^{p'}(G/O_p(G))$, $G/O_p(G)$ is 
even isomorphic to a subgroup of ${\SL}_2(p)$. 
Since $G/O_p(G)$ does not have a 
normal Sylow $p$-subgroup,
Dickson's theorem \cite[Satz~II.8.27]{HuppertI} implies that $G/O_p(G)$ is 
isomorphic to ${\SL}_2(p)$ (cf.~\cite[Theorem~3.6.17]{SuzI}, for
example).
Now \cite[Lemma~IX.7.9]{HuppertII} shows that $G$ is isomorphic to ${\Qd}(p)$;
in particular, $G$ is not a counterexample.
\end{proof}


\section{Groups of small total degree}%
\label{sect:algorithm}

The aim of this section is to classify the finite groups $G$
with $T(G) \leq 100$. The main ideas of an inductive algorithm
to obtain such a classification are as follows.

Let
$G$ be a finite group with $T(G) = n$. Then one of the following occurs:

\begin{itemize}
 \item 
 If $n$ is a prime number then $G$ is a cyclic group of order $n$, by
 Lemma~\ref{TGprime}. Thus w.l.o.g. we may assume that $n$ is composite.

 \item
 If $G$ is a nonabelian finite simple group then $|G| < n^2$ holds, by
 Corollary~\ref{TGinequalities} and Remark~\ref{Berko}. 
 (The finite simple groups of small order are known,
 even if we do not want to use the classification of finite simple groups,
 and their total degrees are easily computed.) This leads to the 
 list of nonabelian finite simple groups of total degree up to 100
 shown in the left part of Table~\ref{table_T_almost_simple}.

\begin{table}
\centering
\caption{Almost simple groups with $T(G) \leq 100$}
\label{table_T_almost_simple}
\begin{tabular}{l|r}
 $G$ & $T(G)$ \\ \hline
 $A_5$ & $16$ \\
 $\PSL_2(7)$ & $28$ \\
 $A_6$ & $46$ \\
 $\PSL_2(8)$ & $64$ \\
 $\PSL_2(11)$ & $66$ \\
 $\PSL_2(13)$ & $92$ \\
\end{tabular}
\quad\quad
\begin{tabular}{l|r}
 $G$ & $T(G)$ \\ \hline
 $S_5$ & $26$ \\
 $\PGL_2(7) = \PSL_2(7).2$ & $50$ \\
 $M_{10} = A_6.2_3$ & $66$ \\
 $S_6$ & $76$ \\
 $\PGL_2(9) = A_6.2_2$ & $82$ \\
 $\PGammaL_2(8) = \PSL_2(8).3$ & $96$ \\
\end{tabular}
\end{table}

As mentioned in the proof of Proposition~\ref{prop:indexpr},
$T(\PSL_2(q)) = q(q+1)/2$ if $q \equiv 3 \pmod{4}$ and
$T(\PSL_2(q)) = 1 + q(q+1)/2$ if $q \equiv 1 \pmod{4}$. 
For even $q$, $T(\PSL_2(q)) = q^2$ holds.

\item 
If the Fitting subgroup of $G$ is trivial then we have
$$F^\ast(G) = E(G) = S_1 \times \cdots \times S_k$$
where $S_1,\ldots,S_k$ are nonabelian finite simple groups. Then 
Lemma~\ref{BerMa} shows that 
$$100 \geq n =T(G) \ge T(E(G)) = T(S_1) \cdot \cdots \cdot T(S_k) \ge 16^k,$$
so that $k = 1$. Thus $E(G)$ is simple, and $G$ is almost simple. 
In addition to the six simple groups mentioned above,
we also get the almost simple groups of total degree up to 100
shown in the right part of Table~\ref{table_T_almost_simple}.

\item
If the Fitting subgroup of $G$ is nontrivial then $G$ has a minimal 
normal subgroup $N$ which is elementary abelian of order $p^d$ where
$p$ is a prime and $d$ is a positive integer. Then we have $T(N) = 
|N| = p^d$ and $n = T(G) \geq T(F) + T(N) - 1$ where $F := G/N$. 
Since $T(F) < T(G)$, we can assume that $F$ has been constructed already,
by induction.
Moreover, $N$ can be considered as a simple
$F$-module. The results in the preceding sections yield further
restrictions on $F$ and $N$. The candidate groups $G$ have to be
constructed from $F$ and $N$ and then it has to be checked whether
$T(G) \leq 100$. Finally, representatives for the isomorphism classes
have to be computed.
\end{itemize}

In order to create the groups in the set $\mathfrak{T}_{n}$
of finite groups $G$ with $T(G) \leq n$,
for $n < 16^2$,
we proceed as follows.

\begin{enumerate}
\item
  Initialize the result list with the trivial group,
  the cyclic groups of prime order at most $n$,
  and the almost simple groups of total degree at most $n$.
\item
  Create a ``work list'' of nontrivial groups whose extensions by an
  irreducible
  module may yield new groups of total degree at most $n$.
  Initialize it with all those groups from the initial result list
  that are nontrivial and have total degree less than $n$,
  sorted by total degree.
\item
  As long as the ``work list'' is nonempty,
  take the first group $F$ from it, delete it from the ``work list'',
  compute the possible extensions $G$ by irreducible modules,
  as described above,
  add those to the result list that have total degree at most $n$
  and do not yet occur in the result list,
  and add those that might have extensions of total degree at most $n$
  to the ``work list'', keeping its ordering by total degree.
\item
  After finitely many steps, the ``work list'' becomes empty,
  and the result list is complete.
\end{enumerate}

Table~\ref{table_T_small} lists some results of our computations. 
The first column contains the total degree. Note that we have omitted
the trivial cases where $T(G)$ is a prime number. The second column
contains the number of groups $G$ of a fixed total degree, up to 
isomorphism. The third column contains the maximal order of a group
with fixed total degree, and the last column lists the groups of
maximal order and fixed total degree.

\begin{table}
\caption{Groups with $T(G) \leq 100$}
\label{table_T_small}
\begin{minipage}{6.5cm}
\scriptsize
\[
   \begin{array}{r|r|r|r}
   T(G) & \#G & |G_{\max}| & G_{\max}  \\ \hline
   4 &     3 &    6 &               S_3  \\
   6 &     5 &   12 &               A_4  \\
   8 &     7 &   20 &              5:4   \\
   9 &     3 &   21 &              7:3   \\
  10 &     7 &   24 &               S_4  \\
  12 &    16 &   42 &              7:6   \\
  14 &     7 &   72 &         3^2:Q_8    \\
  15 &     5 &   55 &             11:5   \\
  16 &    23 &   72 &          3^2:8     \\
  18 &    19 &   78 &             13:6   \\
  20 &    35 &  110 &            11:10   \\
  21 &     2 &   57 &             19:3   \\
  22 &    13 &   72 & (S_3 \times S_3):2,\\
     &       &      & (3 \times A_4):2,  \\
     &       &      &      (2^2:9):2     \\
  24 &    55 &  156 &            13:12   \\
  25 &     2 &   25 &              5^2,  \\
     &       &      &               25   \\
  26 &     8 &  144 &         3^2:QD_{16}\\
  27 &     7 &  171 &             19:9   \\
  28 &    61 &  168 &         \PSL(3,2)  \\
  30 &    34 &  240 &           2^4:15   \\
  32 &   126 &  300 &        5^2:(3:4)   \\
  33 &     7 &  253 &            23:11   \\
  34 &    10 &  120 &             5:S_4  \\
  35 &     3 &  203 &             29:7   \\
  36 &   123 &  600 &      5^2:\SL(2,3)  \\
  38 &    13 &  216 &           3^3:Q_8  \\
  39 &     3 &  351 &           3^3:13   \\
  40 &   159 &  600 &        5^2:(3:8)   \\
  42 &    32 &  448 &      (2^3.2^3):7   \\
  44 &   114 &  506 &            23:22   \\
  45 &    18 &  465 &            31:15   \\
  46 &    19 &  360 &               A_6  \\
  48 &   400 &  600 &           5^2:24   \\
  49 &     3 &  301 &             43:7   \\
  50 &    25 &  432 & ((3^2:Q_8):3):2    \\
  51 &     7 &  243 &          (9:9):3,  \\
     &       &      &          (9:9):3,  \\
     &       &      &        (9 \times 3).3^2  \\
  52 &   257 &  702 &           3^3:26   \\
  54 &    72 &  666 &            37:18   \\
  55 &     1 &   55 &               55   \\
  56 &   799 &  812 &            29:28   \\
   \end{array}
\]
\end{minipage}
\ 
\begin{minipage}{7.5cm}
\scriptsize
\[
   \begin{array}{r|r|r|r}
   T(G) & \#G & |G_{\max}| & G_{\max}  \\ \hline
  57 &     1 &   57 &               57 \\
  58 &    25 &  784 &          7^2:Q_{16}  \\
  60 &   309 & 1176 &      7^2:\SL(2,3)  \\
  62 &    12 &  992 &           2^5:31  \\
  63 &    13 &  903 &            43:21  \\
  64 &   943 &  784 &           7^2:16  \\
  65 &     3 &  689 &            53:13  \\
  66 &    62 & 2352 & 7^2:(\SL(2,3).2)  \\
  68 &   256 &  500 &        (5^2:5):4  \\
  69 &     7 & 1081 &            47:23  \\
  70 &    43 &  610 &            61:10  \\
  72 &  1288 & 1332 &            37:36  \\
  74 &    11 &  216 &    (3^2:3):D_8,   \\
     &       &      &    (3^2:3):D_8,  \\
     &       &      &    (3^2:3):Q_8   \\
  75 &    10 &  915 &            61:15 \\
  76 &   551 & 1200 &       5^2:(24:2) \\
  77 &     3 &  737 &            67:11 \\
  78 &    56 & 2352 &      7^2:(3\times Q_{16})  \\
  80 &  2507 & 1640 &            41:40 \\
  81 &    16 &  657 &             73:9 \\
  82 &    22 &  720 & 3^2:((5 \times Q_8):2),  \\
     &       &      &               \PGL(2,9)  \\
  84 &   947 & 1806 &            43:42  \\
  85 &     2 &  405 &            3^4:5  \\
  86 &    15 &  960 &           2^4:A_5 \\
  87 &    13 & 1711 &            59:29  \\
  88 & 11384 & 1474 &            67:22  \\
  90 &   161 & 4032 &        2^6:(7:9)  \\
  91 &     4 & 1027 &            79:13  \\
  92 &   746 & 2162 &            47:46  \\
  93 &     5 & 1053 &       (3^3:13):3  \\
  94 &    35 &  648 & (2^2:((9 \times 3):3)):2,  \\
     &       &      & (2^2:(3^3:3)):2,  \\
     &       &      & (2^2:(3^3:3)):2,  \\
     &       &      & (2^2:((9 \times 3):3)):2,  \\
     &       &      & (2^2:((9 \times 3):3)):2,  \\
     &       &      & (2^2:(3^2.3^2)):2  \\
  95 &     1 &   95 &               95   \\
  96 & 18652 & 3240 &        3^4:(5:8)   \\
  98 &    32 & 1344 & 2^3:\PSL(3,2),     \\
     &       &      & 2^3.\PSL(3,2)      \\
  99 &    35 & 2211 &            67:33   \\
 100 &   660 & 1620 &           3^4:20 

   \end{array}
\]
\end{minipage}
\end{table}

Altogether there are $41\,359$ groups in $\mathfrak{T} := 
\mathfrak{T}_{100}$, up to isomorphism; 
these include, of course, all groups of order up to 100. 

Also, $\mathfrak{T}$ contains $2\,313$ groups of order $2^7 = 128$
(these are all nonabelian groups of that order)
and $32\,754$ groups of order $2^8 = 256$, but only $579$ groups of
order $2^9 = 512$ and no group of order $2^{10} = 1024$. Similarly,
$\mathfrak{T}$ contains 45 groups of order $3^5 = 243$, but no groups
of order $3^6 = 729$. It also contains the two nonabelian groups of
the orders $5^3 = 125$ and $7^3 = 343$, but
no groups of order $5^4 = 625$ or order $7^4 = 2401$.
For primes $p > 7$, the $p$-groups in $\mathfrak{T}$ are abelian. 

It is perhaps of interest that the groups of order 512 in $\mathfrak{T}$
can be generated by at most 4 elements. There are 36 groups of order 512
with total degree $T(512) = 80$; they can be generated by at most 3
elements and have nilpotency class 5 or 6.

Only 199 groups of odd order are in $\mathfrak{T}$. Of these,
198 have a Sylow tower. The only group of odd order in $\mathfrak{T}$ 
which does not have a Sylow tower is the 2-complement of
$\AGammaL(1,27)$, which has order $3^4 \cdot 13$ and total degree 93. 

There are 20 groups in $\mathfrak{T}$ whose order is divisible
by $5^3$. According to our earlier results, they all have a normal 
Sylow $5$-subgroup of order $5^3$.
Similarly, there are 5 groups in $\mathfrak{T}$
whose order is divisible by $7^3$, and they all
have a normal Sylow $7$-subgroup of order $7^3$. 

Classifying the groups $G$ in $\mathfrak{T}$ currently takes
about three hours of CPU time.
With additional data for example concerning isomorphism tests for
groups of order $512$, the classification for larger total degrees
becomes feasible.


\begin{center}
 {\bf Acknowledgements}
\end{center}

\noindent
The first author gratefully acknowledges support by the German Research
Foundation (DFG)
-- Project-ID 286237555 --
within the SFB-TRR 195 {\it Symbolic Tools in Mathematics
and their Applications}.
The authors are also grateful to Jay Taylor for pointing out the reference
\cite{AM}.



\vspace*{2cm}


\noindent
T. Breuer,
Lehrstuhl f\"ur Algebra und Zahlentheorie,
RWTH Aachen University,
Pontdriesch 14-16,
D-52062 Aachen, Germany, \\
e-mail: \texttt{sam@math.rwth-aachen.de} \\
\\
L. H\'ethelyi,
Department of Algebra,
Budapest University of Technology and Economics,
H-1111 Budapest,
M\H uegyetem rkp. 3-9,
Hungary,\\
e-mail: \texttt{lhethelyi@gmail.com} \\
\\
B. K\"ulshammer,
Institut f\"ur Mathematik,
Friedrich-Schiller-Universit\"at,
D-07737 Jena, Germany, \\
e-mail: \texttt{kuelshammer@uni-jena.de} \\
\\
M. Sz\H{o}ke,
Institute of Mathematics, \\
John von Neumann Faculty of Informatics, \\
\'Obuda University,
H-1034 Budapest,
B\'ecsi \'ut 96/B,
Hungary,\\
e-mail: \texttt{szoke.magdolna@nik.uni-obuda.hu}

\end{document}